\newtheorem{thm}{Theorem}[section]
\newtheorem{lem}[thm]{Lemma}
\newtheorem{cor}[thm]{Corollary}
\newtheorem{pro}[thm]{Proposition}
\newtheorem{ex}[thm]{Example}
\theoremstyle{definition}
\newtheorem{rmk}[thm]{Remark}
\newtheorem{defi}[thm]{Definition}
\newtheorem{thm*}{Theorem}
\newcommand{\nc}{\newcommand}
\newcommand{\delete}[1]{}
\nc{\mlabel}[1]{\label{#1}}  % Use this to suppress names
\nc{\mcite}[1]{\cite{#1}}  % Use this to suppress names
\nc{\mref}[1]{\ref{#1}}  % Use this to suppress names
\nc{\mbibitem}[1]{\bibitem{#1}} % Use this to show number
\nc{\mlabel}[1]{\label{#1}{\hfill \hspace{1cm}{\bf{{\ }\hfill(#1)}}}}
\nc{\mcite}[1]{\cite{#1}{{\em{{\ }(#1)}}}}  % Use this lines to show names
\nc{\mref}[1]{\ref{#1}{{\em{{\ }(#1)}}}}  % Use this lines to show names
\nc{\mbibitem}[1]{\bibitem[\em #1]{#1}} % Use this to show name
\newcommand {\emptycomment}[1]{}
\nc{\oprn}{\theta}
\nc{\Oprn}{\Theta}
\nc{\calo}{\mathcal{O}}
\nc{\oop}{$\mathcal{O}$-operator\xspace}
\nc{\oops}{$\mathcal{O}$-operators\xspace}
\nc{\mrho}{{\bm{\varrho}}}
\nc{\emk}{\mathbf{K}}
\nc{\invlim}{\displaystyle{\lim_{\longleftarrow}}\,}
\nc{\ot}{\otimes}
\newcommand{\lon }{\,\rightarrow\,}
\newcommand{\be }{\begin{equation}}
\newcommand{\ee }{\end{equation}}
\newcommand{\g}{\mathfrak g}
\newcommand{\h}{\mathfrak h}
\newcommand{\huaB}{\mathcal{B}}
\newcommand{\huaS}{\mathcal{S}}
\newcommand{\huaL}{\mathcal{L}}
\newcommand{\huaR}{\mathcal{R}}
\newcommand{\huaG}{\mathcal{G}}
\newcommand{\huaO}{{\mathcal{O}}}
\newcommand{\frkg}{\mathfrak g}
\newcommand{\frkL}{\mathfrak L}
\newcommand{\frkR}{\mathfrak R}
\newcommand{\frkX}{\mathfrak X}
\newcommand{\half}{\frac{1}{2}}
\newcommand{\pair}[1]{\left\langle #1\right\rangle}
\newcommand{\Courant}[1]{\left\llbracket  #1\right\rrbracket }
\newcommand{\Id}{{\rm{Id}}}
\newcommand{\br}[1]{   [ \cdot,    \cdot  ]   }
\newcommand{\Hom}{\mathrm{Hom}}
\newcommand{\Ad}{\mathrm{Ad}}
\newcommand{\gl}{\mathfrak {gl}}
\newcommand{\ad}{\mathrm{ad}}
\newcommand{\pr}{\mathrm{pr}}
\newcommand{\de}{\mathrm{d}}
\newcommand{\frkad}{\mathfrak{ad}}
\nc{\CV}{\mathbf{C}}
\newcommand{\LYA}{Lie-Yamaguti algebra}
\newcommand{\ltp}{\Courant{\cdot,\cdot,\cdot}}
\newcommand{\xrightleftarrow}{\mathrel{\raise{4mm}}}
\begin{document}

\title[The classical Lie-Yamaguti Yang-Baxter equation and Lie-Yamaguti bialgebras]{The classical Lie-Yamaguti Yang-Baxter equation and Lie-Yamaguti bialgebras}
% The classical Yang-Baxter equation in Lie-Yamaguti algebras and Lie-Yamaguti bialgebras

\author{Jia Zhao}
\address{Jia Zhao, School of Sciences, Nantong University, Nantong 226019, Jiangsu, China}
\email{zhaojia@ntu.edu.cn}

\author{Yu Qiao*}
\address{Yu Qiao (corresponding author), School of Mathematics and Statistics, Shaanxi Normal University, Xi'an 710119, Shaanxi, China}
\email{yqiao@snnu.edu.cn}

\date{\today}

\begin{abstract}
In this paper, we develop the bialgebra theory for \LYA s. For this purpose, we exploit two types of compatibility conditions: local cocycle condition and double construction. We define the classical Yang-Baxter equation in \LYA s and show that a solution to the classical Yang-Baxter equation corresponds to a relative Rota-Baxter operator with respect to the coadjoint representation. Furthermore, we generalize some results by Bai in \cite{BCM3} and Semonov-Tian-Shansky in \cite{STS} to the context of \LYA s.
%However, the solutions to the classical Yang-Baxter equation cannot give rise to the local cocycle Lie-Yamaguti bialgebra structures, which differs from the context of $3$-Lie algebras.
Then we introduce the notion of matched pairs of Lie-Yamaguti algebras, which leads us to the concept of double construction Lie-Yamaguti bialgebras following the Manin triple approach to Lie bialgebras. We prove that matched pairs, Manin triples of \LYA s, and double construction Lie-Yamaguti bialgebras are equivalent. Finally, we clarify that a local cocycle condition is a special case of a double construction
for Lie-Yamaguti bialgebras.
%and provide some examples of double construction Lie-Yamaguti bialgebras.
\end{abstract}

\thanks{{\em Mathematics Subject Classification} (2020): 17B38, 16T25, 17A30, 17B81}
%\thanks{Qiao was partially supported by NSFC grant 11971282. }

\keywords{Lie-Yamaguti bialgebra, the classical Yang-Baxter equation in \LYA s, relative Rota-Baxter operator, Manin triple, matched pair}

\maketitle

\vspace{-1.1cm}

\tableofcontents

\allowdisplaybreaks

\section{Introduction}
Roughly speaking, a bialgebra structure on a given algebra $\g$ is endowed with a compatible coalgebra structure on $\g$. For instance, a Lie bialgebra is a Lie algebra $(\g,[\cdot,\cdot])$ together with a cobracket $\delta:\g\longrightarrow\otimes^2\g$ such that $\delta^*:\otimes^2\g^*\longrightarrow\g^*$ is also a Lie algebra structure on $\g^*$ and a certain compatibility condition is satisfied. As we all know, compatibility conditions for a Lie bialgebra can be expressed as three aspects: derivation condition, cocycle condition, and double construction. A Lie bialgebra enjoys an elegant property that these conditions are equivalent and that every condition has its own advantage. More precisely, since the corresponding exterior algebra $\wedge^\bullet\g$ is in fact a graded Lie algebra, the cobracket $\delta$ can be seen as a derivation on $\wedge^\bullet\g$. Thus the derivation condition reads that
$$\delta[x,y]=[\delta(x),y]+[x,\delta(y)],\quad \forall x,y\in \g.$$
The  notation $[x,\delta(y)]$ means that $(\ad_x\otimes{\Id}+{\Id}\otimes\ad_x)\delta(y)$, where $\ad:\g\longrightarrow\gl(\g)$ is the adjoint representation of $\g$. The cocycle condition can be read that the cobracket $\delta$ is a $1$-cocycle on $\g$ with coefficients in the tensor representation $\ad\otimes{\Id}+{\Id}\otimes\ad$, for we are able to form the tensor representation of two representations. Finally, the double construction is that there is a Lie algebra structure on $\g\oplus\g^*$ together with a nondegenerate and symmetric bilinear form.

Parallel to Lie bialgebras, there exist three types of compatibility conditions for $3$-Lie bialgebras:
derivation condition, cocycle condition, and double construction.
Several works such as \cite{B.G.S,BCLM,BGLZ} were devoted to bialgebra theory for $3$-Lie algebras, or generally $n$-Lie algebras.
The derivation condition was referred in  \cite{BCLM}. However, it is unknown whether there is a $3$-Lie algebra structure on $\wedge^\bullet\g$, thus Bai, Guo, and Sheng investigated cocycle conditions and double constructions as compatibility conditions in \cite{B.G.S}.
Since there is no tensor representation on $3$-Lie algebras, cocycle condition does not fit to study $3$-Lie bialgebras. However, for a given $3$-Lie algebra $(\g,[\cdot,\cdot,\cdot])$, they found that $(\otimes^3\g;\ad\otimes1\otimes1)$, $(\otimes^3\g;1\otimes\ad\otimes1)$, and $(\otimes^3\g;1\otimes1\otimes\ad)$ are representations of $\g$, where $\ad: \wedge^2\g \longrightarrow\gl(\g)$ is the adjoint representation, thus used local cocycle condition as its compatibility condition.  Moreover, Manin triples and matched pairs of $3$-Lie algebras were defined, which leads to the notion of double construction $3$-Lie bialgebras. Nevertheless, local cocycle condition and double construction are {\em not} equivalent any more, and an $r$-matrix for a $3$-Lie algebra gives rise to a local cocycle $3$-Lie bialgebra structure. Later, Sheng and his collaborators  found that an $r$-matrix, as a relative Rota-Baxter operator, gives rise to a twilled $3$-Lie algebra, while a twilled $3$-Lie algebra is not equivalent to a matched pair of $3$-Lie algebra in \cite{HST}. This is why an $r$-matrix does not give rise to a double construction $3$-Lie bialgebra structure.

To build bialgebra theory on other algebraic structure, many authors have made efforts in recent years. For example, Sheng and Tang used quadratic Leibniz algebras to study Leibniz bialgebras in \cite{T.S2}, where a quadratic Leibniz algebra is just the Manin  triple of Leibniz algebras. The compatibility condition for Leibniz bialgebras is also the double construction. Moreover, bialgebra theory and the classical Yang-Baxter equation for Hom-Lie algebra version were established in \cite{Yau}. Recently, Rota-Baxter Lie bialgebras and endo Lie bialgebras were studied in \cite{BGLM,BGS1,LS}. Chen, Sti$\acute{\textrm e}$non, and Xu examined weak Lie $2$-bialgebras by using big brackets with respect to which $\huaS^\bullet(V[2]\oplus V^*[1])$ is a graded Lie algebra, and proved that (strict) Lie 2-bialgebras
are in one-one correspondence with crossed modules of Lie bialgebras (\cite{chenz}). % A Lie bialgebra crossed module is a strict Lie $2$-bialgebra which consists of a pair of Lie algebra crossed modules in duality: $\displaystyle{\Big(\theta\mathop{\longrightarrow}^{\phi}\g\Big)}$ and $\displaystyle{\Big(\g^*\mathop{\longrightarrow}^{-\phi^*}\theta^*\Big)}$ such that $(\g\ltimes\theta,\theta^*\ltimes\g^*)$ is a Lie bialgebra and they proved that a pair of Lie algebra crossed modules $\displaystyle{\Big(\theta\mathop{\longrightarrow}^{\phi}\g\Big)}$ and $\displaystyle{\Big(\g^*\mathop{\longrightarrow}^{-\phi^*}\theta^*\Big)}$
%orms a Lie bialgebra crossed module $\displaystyle{\Big(\Big(\theta\mathop{\longrightarrow}^{\phi}\g\Big),\Big(\g^*\mathop{\longrightarrow}^{-\phi^*}\theta^*\Big)\Big)}$ if and only if $(\g,\theta^*)$ is a matched pair of Lie algebras (see Theorem 3.12 in ).
Moreover, they proved that there is a one-to-one correspondence between connected,
simply-connected (quasi-)Poisson Lie 2-groups and (quasi-)Lie 2-bialgebras in \cite{CSX}.
Later Lang, the corresponding author, and Yin proved that Lie 2-bialgebroids
are in one-one correspondence with crossed modules of Lie bialgebroids in \cite{LQY}). More importantly, Tang, Bai, Guo, and Sheng exploited linear deformations of the skew-symmetric classical $r$-matrices and their corresponding triangular Lie bialgebras in \cite{TBGS}, when studying cohomology and deformations of relative Rota-Baxter operators (also called $\huaO$-operators) on Lie algebras.

The notion of Lie triple algebras, or general Lie triple systems, which is a generalization of Lie algebras and Lie triple systems was introduced by Yamaguti in \cite{Yamaguti1}. Afterwards, Yamaguti gave the notion of representations and established cohomology theory of this object in \cite{Yamaguti2,Yamaguti3} during 1950's to 1960's. Later until earlier 21st century, Kinyon and Weinstein named this object as a \LYA ~in \cite{Weinstein} formally.  This kind of algebraic structures has attracted much attention recently. For instance, Benito and his colleagues investigated \LYA s related to simple Lie algebras of type $G_2$ \cite{B.D.E} and afterwards, they explored orthogonal and irreducible \LYA s in \cite{B.B.M} and \cite{B.E.M1,B.E.M2} respectively. Sheng and the first author focused on linear deformations, product structures and complex structures on \LYA s in \cite{Sheng Zhao} and later, relative Rota-Baxter operators and pre-\LYA s were introduced in \cite{SZ1}. Besides, we studied cohomology and deformations of relative Rota-Baxter operators on \LYA s in \cite{Zhao Qiao}.

Due to the importance of bialgebras and \LYA s, it is natural to develop a bialgebra theory for \LYA s. Motivated by Lie bialgebras and $3$-Lie bialgebras, one considers to define a Lie-Yamaguti bialgebra structure on a \LYA ~$(\g,\br,,\ltp)$ as a pair of two cobrackets $(\delta,\omega)$, where $\delta:\g\longrightarrow\otimes^2\g$ and $\omega:\g\longrightarrow\otimes^3\g$, such that one of the following compatibility conditions is satisfied:
\begin{itemize}
\item derivation condition: the cobrackets $\delta$ and $\omega$ is a derivation on $\wedge^\bullet\g$ with respect to the binary and ternary brackets respectively, i.e.,
\begin{eqnarray*}
\delta([x,y])&=&[\delta(x),y]+[x,\delta(y)],\\
\omega(\Courant{x,y,z})&=&\Courant{\omega(x),y,z}+\Courant{x,\omega(y),z}+\Courant{x,y,\omega(z)},\quad\forall x,y,z\in \g;
\end{eqnarray*}
\item cocycle condition: the cobrackets $\delta$ and $\omega$ are $1$-cocycles of $\g$ with respect to a certain representation;
\item double construction: there is a \LYA ~structure on $\g\oplus\g^*$ together with a symmetric, nondegenerate bilinear form.
\end{itemize}

Since we have not found a suitable \LYA ~structure on the exterior algebra $\wedge^\bullet\g$ so far, the derivation condition is not considered in this paper. Therefore firstly we investigate the cocycle condition in Section 3 after a preparation in Section 2. Since there is {\em no} natural tensor representation of a \LYA ~$\g$, so we decided to use the {\em local cocycle condition} as the compatibility condition parallel to that of $3$-Lie bialgebras in \cite{B.G.S}.  Namely, we observe that $(\otimes^2\g;{\Id}\otimes\ad,{\Id}\otimes\huaR),~(\otimes^2\g;\ad\otimes{\Id},\huaR\otimes{\Id})$ and $(\otimes^3\g;\ad\otimes{\Id}\otimes{\Id},\huaR\otimes{\Id}\otimes{\Id}),~(\otimes^3\g;{\Id}\otimes\ad\otimes{\Id},{\Id}\otimes\huaR\otimes{\Id}),
~(\otimes^3\g;{\Id}\otimes{\Id}\otimes\ad,{\Id}\otimes{\Id}\otimes\huaR)$ are representations of a \LYA ~$\g$, where $(\g;\ad,\huaR)$ is the adjoint representation of $\g$, thus we modify the cocycle condition as follows (Definition \ref{local}):
\begin{itemize}
\item $\delta_1$ is a $1$-cocycle with respect to the representation $(\otimes^2\g;{\Id}\otimes\ad,{\Id}\otimes\huaR)$;
\item $\delta_2$ is a $1$-cocycle with respect to the representation $(\otimes^2\g;\ad\otimes{\Id},\huaR\otimes{\Id})$;
\item $\omega_1$ is a $1$-cocycle with respect to the representation $(\otimes^3\g;\ad\otimes{\Id}\otimes{\Id},\huaR\otimes{\Id}\otimes{\Id})$;
\item $\omega_2$ is a $1$-cocycle with respect to the representation $(\otimes^3\g;{\Id}\otimes\ad\otimes{\Id},{\Id}\otimes\huaR\otimes{\Id})$;
\item $\omega_3$ is a $1$-cocycle with respect to the representation $(\otimes^3\g;{\Id}\otimes{\Id}\otimes\ad,{\Id}\otimes{\Id}\otimes\huaR)$,
\end{itemize}
where $\delta=\delta_1+\delta_2$ and $\omega=\omega_1+\omega_2+\omega_3$ are cobrackets on $\g$. Moreover, we define the classical Yang-Baxter equation in \LYA s, but its solution {\em fails} to give rise to a local cocycle Lie-Yamaguti bialgebra structure. However, we find that a solution to the classical Yang-Baxter equation is one-to-one correspondence to a relative Rota-Baxter operator with respect to the coadjoint representation. That is, we have the following theorem.

\begin{thm*}(Theorem \ref{THM:CYBE})
A skew-symmetric $2$-tensor $r\in \otimes^2\g$ is a solution to the classical Lie-Yamaguti Yang-Baxter equation if and only if the induced map $r^\sharp:\g^*\longrightarrow\g$ is a relative Rota-Baxter operator with respect to the coadjoint representation, where $\pair{r^\sharp(\xi),\eta}=\pair{r,\xi\otimes\eta}$, for all $\xi,\eta\in \g^*$.
 \end{thm*}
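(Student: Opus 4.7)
The plan is to establish the equivalence by transporting both conditions through the natural pairing $\pair{\cdot,\cdot}: \g\otimes \g^* \to \K$ and matching identities term by term. Writing $r=\sum_i a_i\otimes b_i$, skew-symmetry of $r$ gives
$$r^\sharp(\xi)=\sum_i\pair{\xi,a_i}\,b_i=-\sum_i\pair{\xi,b_i}\,a_i,\qquad \forall\,\xi\in\g^*,$$
which is the translation device allowing me to freely move between expressions in $r$ (on $\otimes^\bullet \g$) and expressions in $r^\sharp$ (as a map $\g^*\to \g$).

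First I would recall, from earlier in the paper, the explicit form of the classical Lie-Yamaguti Yang-Baxter equation, which should decompose into a binary part living in $\otimes^3\g$ (analogous to $[r_{12},r_{13}]+[r_{12},r_{23}]+[r_{13},r_{23}]=0$ for the Lie case) and a ternary part built from the $3$-bracket acting on suitable components of $r\otimes r\otimes r$. In parallel, I would spell out the defining equations of a relative Rota-Baxter operator $T:V\to\g$ with respect to a representation $(V;\rho,\huaR)$: one identity of the form $[T(u),T(v)]=T\bigl(\rho(T(u))v-\rho(T(v))u\bigr)$ and one ternary identity of the form $\ltp T(u),T(v),T(w)=T\bigl(\huaR(\cdot,\cdot)\cdot+\cdots\bigr)$, specialised to the coadjoint representation $(\g^*;\ad^*,\huaR^*)$. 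The definitions of coadjoint action and duality of $\huaR$ supply the explicit formulas needed to pair with arbitrary covectors.

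Next I would carry out two parallel computations. For the binary part, I expand $[r^\sharp(\xi),r^\sharp(\eta)]$ using $r=\sum_i a_i\otimes b_i$, rewrite $r^\sharp(\ad^*_{r^\sharp(\xi)}\eta - \ad^*_{r^\sharp(\eta)}\xi)$ in the same basis, then pair both with an arbitrary $\zeta\in\g^*$; after invoking skew-symmetry of $r$, the difference collapses to the contraction of $\xi\otimes\eta\otimes\zeta$ with the binary part of the CYBE. The analogous, longer computation for the ternary bracket compares $\ltp r^\sharp(\xi),r^\sharp(\eta),r^\sharp(\zeta)$ with the image under $r^\sharp$ of the appropriate combination of $\huaR^*$ terms, and reduces it, via the same pair-and-skew-symmetrize procedure, to the contraction of $\xi\otimes\eta\otimes\zeta\otimes(\cdot)$ with the ternary part of the CYBE.

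The main obstacle I anticipate is the ternary side: the $3$-bracket provides three natural acting slots, so after unpacking $r=\sum_i a_i\otimes b_i$ there are many cross-terms of the form $\ltp a_i,a_j,a_k$ with various placements of the $b_\ell$'s, and each must be assigned correctly to one of the tensor positions dictated by $\ad\otimes\Id\otimes\Id$, $\Id\otimes\ad\otimes\Id$, $\Id\otimes\Id\otimes\ad$ (and similarly for $\huaR$). Setting up a consistent bookkeeping convention early, and exploiting the skew-symmetry $r_{ij}=-r_{ji}$ systematically to reshuffle indices, will be essential. Once both reductions are complete, nondegeneracy of the pairing $\g\otimes\g^*\to \K$ yields equivalence of the two conditions, finishing the proof.
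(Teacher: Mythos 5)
Your proposal follows essentially the same route as the paper's proof: expand $r=\sum_i x_i\otimes y_i$, use skew-symmetry to write $r^\sharp$ in components, pair $[r^\sharp(\xi),r^\sharp(\eta)]-r^\sharp(\ad^*_{r^\sharp(\xi)}\eta-\ad^*_{r^\sharp(\eta)}\xi)$ and the corresponding ternary expression against arbitrary covectors, and identify the results with the contractions of $\xi\otimes\eta\otimes\zeta(\otimes\kappa)$ against $[r,r]$ and $\Courant{r,r,r}$, the four summands of $\Courant{r,r,r}$ matching the four terms of the Rota--Baxter identity exactly as you anticipate. The approach is sound and the bookkeeping concern you flag is indeed the only real work; no gap.
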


\noindent{Furthermore, we generalize some results in \cite{BCM3} and in \cite{STS} by Bai and Semonov-Tian-Shansky respectively to the context of \LYA s.}

In Section 4, motivated by the double of a Lie bialgebra, it is natural to consider the {\em double construction} as a compatibility condition for a Lie-Yamaguti bialgebra.
\emptycomment{
\begin{itemize}
\item there is a \LYA ~structure on $\g\oplus\g^*$ together with a symmetric, nondegenerate bilinear form.
\end{itemize}}
In order to extend this approach to the context of \LYA s, we introduce the notions of Manin triples and matched pairs of \LYA s. Moreover, we prove that matched pairs, Manin triples of \LYA s, and double construction Lie-Yamaguti bialgebras are equivalent. That is the following vital theorem.
\begin{thm*}{\rm(Theorem ~\ref{main})}
Let $(\g,\br,,\ltp)$ be a \LYA, and $\delta:\g\longrightarrow\otimes^2\g$ and $\omega:\g\longrightarrow\otimes^3\g$ linear maps. Suppose that a pair of structure maps $(\delta^*,\omega^*)$ defines a \LYA ~structure on $\g^*$. Then the following statements are equivalent:
\begin{itemize}
\item[(1)] $(\g,\g^*)$ is a double construction Lie-Yamaguti bialgebra;
\item[(2)] the quadruple $\Big(\g,\g^*;(\ad^*,-\huaR^*\tau),(\mathfrak{ad}^*,-\frkR^*\tau)\Big)$ is a matched pair of \LYA s, where $(\ad^*,-\huaR^*\tau)$ and $(\mathfrak{ad}^*,-\frkR^*\tau)$ are the coadjoint representations of $\g$ and $\g^*$ on $\g^*$ and $\g$ respectively;
\item[(3)] the triple $\Big(\g\oplus\g^*,\g,\g^*\Big)$ is a Manin triple of \LYA s.
\end{itemize}
 \end{thm*}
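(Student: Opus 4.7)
The plan is to run the chain $(3)\Rightarrow(2)\Rightarrow(1)\Rightarrow(3)$, handling $(2)\Leftrightarrow(3)$ as the geometric core and $(1)\Leftrightarrow(2)$ as a dualization step. Throughout I will use the canonical nondegenerate symmetric bilinear form on $\g\oplus\g^*$ defined by $B(x+\xi,y+\eta):=\pair{x,\eta}+\pair{\xi,y}$, together with the observation that $\g$ and $\g^*$ are both isotropic (hence Lagrangian) for $B$.

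For $(3)\Rightarrow(2)$, starting from a Manin triple with the bilinear form above, I would decompose the binary and ternary brackets of $\g\oplus\g^*$ on mixed tuples into their $\g$- and $\g^*$-components. The invariance of $B$ under both operations, together with the isotropy of the two subalgebras, forces the $\g^*$-valued component of the $\g$-action on $\g^*$ to coincide with the coadjoint representation $(\ad^*,-\huaR^*\tau)$, and symmetrically the $\g$-valued component of the $\g^*$-action on $\g$ to coincide with $(\frkad^*,-\frkR^*\tau)$. The LYA axioms of $\g\oplus\g^*$, restricted to the various mixed placements of arguments, then translate exactly into the compatibility conditions defining a matched pair of \LYA s with these actions. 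The converse $(2)\Rightarrow(3)$ is constructive: given the matched pair, I build the LYA structure on $\g\oplus\g^*$ by combining the brackets on the two factors with the coadjoint cross-actions via the standard matched pair formulas; the bilinear form $B$ is symmetric and nondegenerate by construction, and invariance under both the binary and ternary brackets follows directly from the defining property of the coadjoint representations.

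For $(1)\Leftrightarrow(2)$, I would transpose the cobrackets $\delta:\g\longrightarrow\otimes^2\g$ and $\omega:\g\longrightarrow\otimes^3\g$ to get operations $\delta^*$ and $\omega^*$ that, by hypothesis, equip $\g^*$ with a LYA structure. The double-construction compatibility — that the extended binary and ternary brackets on $\g\oplus\g^*$ admit $B$ as an invariant form — when read through $B$, unpacks into a system of identities linking $[\cdot,\cdot]$, $\ltp$, $\delta$, $\omega$ with the coadjoint actions of $\g$ on $\g^*$ and of $\g^*$ on $\g$. I will check that each such identity is precisely one of the matched pair axioms between $(\g,\br,,\ltp)$ and $(\g^*,\delta^*,\omega^*)$ with the coadjoint actions, so that (1) and (2) are equivalent reformulations of each other.

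The main obstacle will be the bookkeeping in the ternary case. Unlike Lie bialgebras, each ternary bracket on $\g\oplus\g^*$ has four mixed types according to whether zero, one, two or three arguments come from $\g$, and the skew-symmetry in the first two slots interacts nontrivially with the $\tau$-twist in $-\huaR^*\tau$ and $-\frkR^*\tau$. A disciplined strategy is to pair one free element of $\g$ (or $\g^*$) against the ternary Jacobi-type identity of $\g\oplus\g^*$ on a mixed tuple and read off the resulting matched-pair equation; doing this for each slot placement exhausts the axioms. Once this bookkeeping is arranged, the verifications are essentially forced by invariance of $B$ and isotropy of $\g$ and $\g^*$, so the theorem falls out as the triple bookkeeping is completed.
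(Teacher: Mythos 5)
Your proposal follows essentially the same route as the paper: the paper also splits the theorem into two bidirectional equivalences, proving $(2)\Leftrightarrow(3)$ by showing that invariance of the canonical form $\huaB$ on $\g\oplus\g^*$ together with isotropy forces the cross-actions to be the coadjoint representations (Proposition \ref{thm:mt}), and $(1)\Leftrightarrow(2)$ by pairing the matched-pair axioms against elements of $\g^*$ to convert them into the tensor identities on $\delta$ and $\omega$ that define the double construction bialgebra (Proposition \ref{LYbi}). The bookkeeping you anticipate for the mixed ternary slots is exactly the content of equations \eqref{mtp1}--\eqref{mtp18} and their dual reformulations \eqref{LYbi1}--\eqref{LYbi7} in the paper, so your plan is correct and matches the published argument.
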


\noindent{Similar to the case of $3$-Lie bialgebras, local cocycle condition and double construction for a Lie-Yamaguti bialgebra are {\em not} equivalent as compatibility conditions. In fact, a local cocycle condition is a special case of double construction, which implies that properties of ternary operations on \LYA s or $3$-Lie algebras are quite different from those of binary operations on Lie algebras.
%\qy{We provide some examples of double construction Lie-Yamaguti bialgebras.}}

As a summary, all the relations among those concepts in the context of \LYA s are illustrated in the following diagram.

\[
\xymatrix{
                                      &                                      &                      & \text{Manin triple}\ar[d]  \\
 \text{relative RB-operator}\ar@<2.5pt>[r]  &\text{solutions of the CYBE}\ar@<2.5pt>[l]\ar@{.>}[r]|<<<<{/}&\text{local cocycle cond.} & \text{double constr.} \ar[l]\ar[u]\ar[d]  \\
                                  &                                      &                      &  \text{matched pair}\ar[u]
}
\]
\emptycomment{
The paper is organized as follows. In Section 2, we recall some preliminaries such as definitions of \LYA s and representations, and their cohomology theory. %In particular, the coadjoint  representation-dual to the adjoint representation-is referred throughout this paper.
In Section 3, we define the classical Yang-Baxter equation in \LYA s ~and local cocycle Lie-Yamaguti bialgebras, and moreover clarify relationship between solutions to the classical Yang-Baxter equation and relative Rota-Baxter operators.
Finally, we introduce the notions of matched pairs, Manin triples of \LYA s, and double construction Lie-Yamaguti bialgebras, and establish the equivalence among standard Manin triple of $\g$ and $\g^*$, the corresponding matched pair, and double construction Lie-Yamaguti bialgebra $(\g,\g^*)$. Moreover, we show that a double construction Lie-Yamaguti bialgebra gives rise to a local cocycle one and several examples of double construction Lie-Yamaguti bialgebras are given in Section 4.}

Note once again that \LYA s  are a generalization of Lie algebras and Lie triple systems, thus when the given \LYA s in the present paper are restricted to the context of Lie triple systems, all the notions and conclusions are still valid.

\smallskip
{\bf Terminologies and Notations}: Let $\g$ be a vector space. For any $n$-tensor $T=x_1\otimes\cdots\otimes x_n\in \otimes^n\g~(n\geqslant 2)$ and $1\leqslant i<j\leqslant n$, define the switching operator to be
$$\sigma_{ij}(T)=x_1\otimes\cdots\otimes x_j\otimes\cdots \otimes x_i\otimes\cdots\otimes x_n.$$

In particular, for any $2$-tensor $x\otimes y\in \otimes^2\g$, the switching operator $\sigma_{12}$ is also denoted by $\tau$ in this article, i.e.,
$$\tau(x\otimes y)=y\otimes x.$$

In the tensor notation, we denote the Identity map $\Id$ by $1$ in this paper. For example, the tensor $\ad\ot\Id$ is denoted by $\ad\ot1$.

\smallskip
{\bf Acknowledgements}: We would like to thank Professor Yunhe Sheng and Rong Tang for their fruitful discussions and useful suggestions. Qiao was partially supported by NSFC grant 11971282.

\smallskip
\section{Preliminaries}
All vector spaces occurring in the article are assumed to be over a field of characteristic zero and finite-dimensional.
In this section, we briefly recall some basic notions such as Lie-Yamaguti algebras, representations and their cohomology theory. In particular, the coadjoint representation of a \LYA ~is a vital object in this paper.
%Yamaguti first defined the notion of Lie-Yamaguti algebras in \cite{Yamaguti1,Yamaguti2}.
\begin{defi}\cite{Weinstein}\label{LY}
A {\bf Lie-Yamaguti algebra} is a vector space $\g$ together with a bilinear bracket $[\cdot,\cdot]:\wedge^2  \mathfrak{g} \to \mathfrak{g} $ and a trilinear bracket $\Courant{\cdot,\cdot,\cdot}:\wedge^2\g \otimes  \mathfrak{g} \to \mathfrak{g} $, such that the following conditions hold
\begin{eqnarray*}
~ &&\label{LY1}[[x,y],z]+[[y,z],x]+[[z,x],y]+\Courant{x,y,z}+\Courant{y,z,x}+\Courant{z,x,y}=0,\\
~ &&\Courant{[x,y],z,w}+\Courant{[y,z],x,w}+\Courant{[z,x],y,w}=0,\\
~ &&\label{LY3}\Courant{x,y,[z,w]}=[\Courant{x,y,z},w]+[z,\Courant{x,y,w}],\\
~ &&\Courant{x,y,\Courant{z,w,t}}=\Courant{\Courant{x,y,z},w,t}+\Courant{z,\Courant{x,y,w},t}+\Courant{z,w,\Courant{x,y,t}},\label{fundamental}
\end{eqnarray*}
for all $x,y,z,w,t \in \g$. We denote a Lie-Yamaguti algebra by $(\g,\br,,\ltp)$.
\end{defi}

Note that a Lie-Yamaguti algebra $(\g,\br,,\ltp)$ with $[x,y]=0$ for all $x,y\in \g$ reduces to a Lie triple system, while with $\Courant{x,y,z}=0$ for all $x,y,z\in \g$ it reduces to a Lie algebra. %Thus a \LYA can be seen as a generalization of a Lie algebra and a Lie triple system, and the conclusions in the present paper are also valid in the context Lie triple systems.
\emptycomment{
The following example shows that there is a natural trilinear bracket on a Lie algebra, together with which it appears to be a Lie-Yamaguti algebra.

\begin{ex}
 Let $(\frkg,[\cdot,\cdot])$ be a Lie algebra. We define $\Courant{\cdot,\cdot,\cdot
 }:\wedge^2\g\otimes \g\lon \g$ to be  $$\Courant{x,y,z}:=[[x,y],z],\quad \forall x,y, z \in \mathfrak{g}.$$  Then $(\g,[\cdot,\cdot],\Courant{\cdot,\cdot,\cdot})$ forms a Lie-Yamaguti algebra.
\end{ex}}

The following example is taken from \cite{Nomizu}.

\begin{ex}
Let $M$ be a closed manifold \footnote{a smooth compact manifold without boundary} with an affine connection, and denote by $\frkX(M)$ the set of vector fields on $M$. For all $x,y,z\in \frkX(M)$, set
\begin{eqnarray*}
[x,y]&=&-T(x,y),\\
\Courant{x,y,z}&=&-R(x,y)z,
\end{eqnarray*}
where $T$ and $R$ are torsion tensor and curvature tensor respectively. It turns out that the triple
$ (\frkX(M),[\cdot,\cdot],\Courant{\cdot,\cdot,\cdot})$ forms an (infinite-dimensional) \LYA.
\end{ex}
\emptycomment{
\begin{defi}\cite{Takahashi}
Let $(\g,[\cdot,\cdot]_{\g},\Courant{\cdot,\cdot,\cdot}_{\g})$ and $(\h,[\cdot,\cdot]_{\h},\Courant{\cdot,\cdot,\cdot}_{\h})$ be two Lie-Yamaguti algebras. A {\bf homomorphism} from $(\g,[\cdot,\cdot]_{\g},\Courant{\cdot,\cdot,\cdot}_{\g})$ to $(\h,[\cdot,\cdot]_{\h},\Courant{\cdot,\cdot,\cdot}_{\h})$ is a linear map $\phi:\g \to \h$ preserving the Lie-Yamaguti algebra structure, i.e., for all $x,y,z\in \g,$ the following equalities hold
\begin{eqnarray*}
\phi([x,y]_{\g})&=&[\phi(x),\phi(y)]_{\h},\\
~ \phi(\Courant{x,y,z}_{\g})&=&\Courant{\phi(x),\phi(y),\phi(z)}_{\h}.
\end{eqnarray*}
\end{defi}}

The notion of representations of Lie-Yamaguti algebras was introduced in \cite{Yamaguti2}.
\begin{defi}\label{defi:representation}
Let $(\g,[\cdot,\cdot],\Courant{\cdot,\cdot,\cdot})$ be a Lie-Yamaguti algebra. A {\bf representation} of $\g$ is a vector space $V$ endowed with a linear map $\rho:\g \to \gl(V)$ and a bilinear map $\mu:\otimes^2 \g \to \gl(V)$, which satisfies the following conditions for all $x,y,z,w \in \g$,
\begin{eqnarray*}
~&&\label{RLYb}\mu([x,y],z)-\mu(x,z)\rho(y)+\mu(y,z)\rho(x)=0,\\
~&&\label{RLYd}\mu(x,[y,z])-\rho(y)\mu(x,z)+\rho(z)\mu(x,y)=0,\\
~&&\label{RLYe}\rho(\Courant{x,y,z})=[D_{\rho,\mu}(x,y),\rho(z)],\\
~&&\label{RYT4}\mu(z,w)\mu(x,y)-\mu(y,w)\mu(x,z)-\mu(x,\Courant{y,z,w})+D_{\rho,\mu}(y,z)\mu(x,w)=0,\\
~&&\label{RLY5}\mu(\Courant{x,y,z},w)+\mu(z,\Courant{x,y,w})=[D_{\rho,\mu}(x,y),\mu(z,w)],
\end{eqnarray*}
where $D_{\rho,\mu}$ is given by
\begin{eqnarray}
~ &&\label{rep} D_{\rho,\mu}(x,y)=\mu(y,x)-\mu(x,y)+[\rho(x),\rho(y)]-\rho([x,y]),\quad \forall x,y\in \g.
\end{eqnarray}
It is easy to see that $D_{\rho,\mu}$ is skew-symmetric. We denote a representation of $\g$ by $(V;\rho,\mu)$. In the sequel, we write $D_{\rho,\mu}$ as $D$ for short without confusion.
\end{defi}

Note that the notion of representations of \LYA s is also a generalization of that of Lie algebras or Lie triple systems. By a direct computation, we have the following proposition.
\emptycomment{
\begin{rmk}\label{rmk:rep}
Let $(\g,[\cdot,\cdot],\Courant{\cdot,\cdot,\cdot})$ be a Lie-Yamaguti algebra and $(V;\rho,\mu)$ its representation. If $\rho=0$ and the Lie-Yamaguti algebra $\g$ reduces to a Lie tripe system $(\g,\Courant{\cdot,\cdot,\cdot})$,  then $(V;\mu)$  is a representation of the Lie triple system $(\g,\Courant{\cdot,\cdot,\cdot})$; If $\mu=0$, $D_{\rho,\mu}=0$ and the Lie-Yamaguti algebra $\g$ reduces to a Lie algebra $(\g,[\cdot,\cdot])$, then $(V;\rho)$ is a representation  of the Lie algebra $(\g,[\cdot,\cdot])$. So the notion of representations of \LYA s is also a natural generalization of that of Lie algebras and Lie triple systems.
\end{rmk}}

\begin{pro}
If $(V;\rho,\mu)$ is a representation of a Lie-Yamaguti algebra $(\g,[\cdot,\cdot],\Courant{\cdot,\cdot,\cdot})$. Then we have the following equalities
\begin{eqnarray*}
\label{RLYc}&&D([x,y],z)+D([y,z],x)+D([z,x],y)=0;\\
\label{RLY5a}&&D(\Courant{x,y,z},w)+D(z,\Courant{x,y,w})=[D(x,y),D(z,w)];\\
~ &&\mu(\Courant{x,y,z},w)=\mu(x,w)\mu(z,y)-\mu(y,w)\mu(z,x)-\mu(z,w)D(x,y),\label{RLY6}
\end{eqnarray*}
for all $x,y,z,w\in \g$.
\end{pro}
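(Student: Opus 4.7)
The plan is to prove the three identities by unwinding the definition of $D$ and repeatedly invoking the representation axioms together with the fundamental identities of the Lie-Yamaguti algebra. I would prove them in reverse order of difficulty: the third identity first, then the first, then the second.

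For the third identity, I would not touch the definition of $D$ at all. Instead, from axiom $(V)$ I isolate $\mu(\Courant{x,y,z},w) = [D(x,y),\mu(z,w)] - \mu(z,\Courant{x,y,w})$. To handle the remaining term $\mu(z,\Courant{x,y,w})$, I would rearrange axiom $(IV)$ to read $\mu(a,\Courant{b,c,d}) = \mu(c,d)\mu(a,b) - \mu(b,d)\mu(a,c) + D(b,c)\mu(a,d)$ and apply it with $(a,b,c,d) = (z,x,y,w)$. Substituting and expanding the commutator $[D(x,y),\mu(z,w)]$, the term $D(x,y)\mu(z,w)$ cancels exactly, leaving the required expression.

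For the first identity, I would expand each $D([x,y],z)$ via its definition, producing four types of cyclic sums: over $\mu(z,[x,y])$, over $\mu([x,y],z)$, over $[\rho([x,y]),\rho(z)]$, and over $\rho([[x,y],z])$. Axioms $(I)$ and $(II)$ transform the first two sums into expressions of the form $\rho(\cdot)[\mu(\cdot,\cdot)-\mu(\cdot,\cdot)]$ and $[\mu(\cdot,\cdot)-\mu(\cdot,\cdot)]\rho(\cdot)$; their difference is a sum of cyclic commutators $[\mu(y,x)-\mu(x,y),\rho(z)]$. The first fundamental Lie-Yamaguti identity rewrites $\sum_{cyc}\rho([[x,y],z])$ as $-\sum_{cyc}\rho(\Courant{x,y,z})$, which by $(III)$ becomes $-\sum_{cyc}[D(x,y),\rho(z)]$. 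Reinserting the definition of $D$ in this last sum, the Jacobi identity in $\gl(V)$ kills $\sum_{cyc}[[\rho(x),\rho(y)],\rho(z)]$, the $\rho([x,y])$--$\rho(z)$ commutators cancel against the third cyclic sum, and the remaining $\mu$--$\rho$ commutators cancel against the contribution from $(I)$ and $(II)$. Everything vanishes.

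For the second identity, I would expand $D(\Courant{x,y,z},w) + D(z,\Courant{x,y,w})$ into three groups: the four $\mu$-terms involving ternary brackets, the two $\rho$-commutators of the form $[\rho(\Courant{\cdot,\cdot,\cdot}),\rho(\cdot)]$, and the two $\rho$-of-binary-bracket terms. Axiom $(V)$ applied once directly and once with $z,w$ swapped collapses the four $\mu$-terms into $[D(x,y),\mu(w,z)]-[D(x,y),\mu(z,w)]$. Axiom $(III)$ rewrites the two $\rho$-commutators as $[[D(x,y),\rho(z)],\rho(w)] + [\rho(z),[D(x,y),\rho(w)]]$, which by Jacobi in $\gl(V)$ equals $[D(x,y),[\rho(z),\rho(w)]]$. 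For the last group, the third fundamental Lie-Yamaguti identity $\Courant{x,y,[z,w]} = [\Courant{x,y,z},w] + [z,\Courant{x,y,w}]$ lets me combine the two $\rho$-terms into $\rho(\Courant{x,y,[z,w]})$, which by $(III)$ equals $[D(x,y),\rho([z,w])]$. Factoring $[D(x,y),\cdot]$ out of every piece yields $[D(x,y), \mu(w,z)-\mu(z,w)+[\rho(z),\rho(w)]-\rho([z,w])] = [D(x,y),D(z,w)]$ by the definition of $D$.

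The main obstacle will be the bookkeeping in the second identity: one has to apply $(V)$ twice (with $z,w$ and $w,z$), recognize the Jacobi configuration on the $\rho$-commutator pair, and invoke the third Lie-Yamaguti identity at just the right moment so that every surviving term can be absorbed into the final commutator $[D(x,y),D(z,w)]$.
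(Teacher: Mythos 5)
Your proposal is correct: all three identities follow exactly as you describe, and I verified each step (the cancellation of $D(x,y)\mu(z,w)$ in the third identity, the cancellation of the cyclic $\mu$--$\rho$ commutators and the Jacobi/LY1 interplay in the first, and the factoring of $[D(x,y),\cdot]$ out of all three groups via axioms (III), (V) and the third Lie-Yamaguti identity in the second). The paper offers no details beyond ``by a direct computation,'' and your argument is precisely the direct computation intended, so there is nothing to compare beyond noting that you have supplied the omitted verification.
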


\begin{ex}\label{ad}
Let $(\g,[\cdot,\cdot],\Courant{\cdot,\cdot,\cdot})$ be a Lie-Yamaguti algebra. We define linear maps $\ad:\g \to \gl(\g)$ and $\huaR :\otimes^2\g \to \gl(\g)$ to be $x \mapsto \ad_x$ and $(x,y) \mapsto \huaR(x,y)$ respectively, where $\ad_xz=[x,z]$ and $\huaR(x,y)z=\Courant{z,x,y}$ for all $z \in \g$. Then $(\ad,\huaR)$ forms a representation of $\g$ on itself, where $\huaL:= D_{\ad,\huaR}$ is given by
$$\huaL(x,y)z=\Courant{x,y,z}, \quad \forall z\in \g.$$
The representation $(\g;\ad,\huaR)$ is called the {\bf adjoint representation}. If $(\g^*,[\cdot,\cdot]_*,\Courant{\cdot,\cdot,\cdot}_*)$ is also a \LYA, then the adjoint representation is denoted by $(\g^*;\frkad,\frkR)$ in this paper, where \emptycomment{for any $\xi,\eta\in \g^*$, $\frkad_\xi:\g^*\longrightarrow\g^*$ and $\frkR(\xi,\eta):\g^*\longrightarrow\g^*$ is given by
$$\frkad_\xi\zeta=[\xi,\zeta]_*,\quad \frkR(\xi,\eta)\zeta=\Courant{\zeta,\xi,\eta}_*,\quad \forall\zeta\in \g^*$$
respectively. Note also that $\frkL:=D_{\frkad,\frkR}$ is given by}
$\frkL:=D_{\frkad,\frkR}.$
\end{ex}

The coadjoint representation of a \LYA ~plays an important role in the article. It is natural to recall dual representations in \cite{SZ1}.  Let $(V;\rho,\mu)$ be a representation of a Lie-Yamaguti algebra $(\g,[\cdot,\cdot],\Courant{\cdot,\cdot,\cdot})$ and $V^*$ the dual space of $V$. We define linear maps $\rho^*:\g \to \gl(V^*)$ and $\mu^*,~D_{\rho,\mu}^*:\otimes^2 \g \to \gl(V^*)$ to be
\begin{eqnarray*}
\label{pair1}\pair{\rho^*(x)\alpha, v}&=&-\pair{\alpha,\rho(x)v},\\
\label{pair2}\pair{\mu^*(x,y)\alpha,v}&=&-\pair{\alpha,\mu(x,y)v},\\
\label{pair3}\langle D_{\rho,\mu}^*(x,y)\alpha,v\rangle&=&-\langle\alpha,D_{\rho,\mu}(x,y)v\rangle.
\end{eqnarray*}
for all $x,y \in \g, ~\alpha \in V^*, ~v \in V$.

\begin{pro}\label{dual}{\rm (\cite{SZ1})}
Let $(V;\rho,\mu)$ be a representation of a Lie-Yamaguti algebra $(\g,[\cdot,\cdot],\Courant{\cdot,\cdot,\cdot})$. Then
\begin{eqnarray*}
\big(V^*;\rho^*,-\mu^*\tau\big)
\end{eqnarray*}
is a representation of   $\g$ on $V^*$, where $D_{\rho,\mu}^*=D_{\rho^*,-\mu^*\tau}$. We call $(V^*;\rho^*,-\mu^*\tau)$ the {\bf dual representation} of $(V;\rho,\mu)$.
\end{pro}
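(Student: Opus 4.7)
The plan is to verify the five defining identities of Definition \ref{defi:representation} for the candidate pair $(\rho^*,-\mu^*\tau)$ acting on $V^*$, after first showing that the auxiliary operator $D_{\rho^*,-\mu^*\tau}$ built from it equals $D^*_{\rho,\mu}$. Throughout, I will dualize each identity by evaluating it on $\alpha\in V^*$, pairing with $v\in V$, and using the two basic rules $\pair{A^*\alpha,v}=\pair{\alpha,Av}$ and $(AB)^*=B^*A^*$.

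First I would unpack the defining formula \eqref{rep} for the would-be dual representation:
\[
D_{\rho^*,-\mu^*\tau}(x,y)=-\mu^*(x,y)+\mu^*(y,x)+[\rho^*(x),\rho^*(y)]-\rho^*([x,y]).
\]
Pairing with $\alpha\otimes v$ and using the two rules above, each term becomes (up to sign) the corresponding term of $D_{\rho,\mu}(x,y)$ paired with $\alpha\otimes v$; collecting signs yields $\pair{D_{\rho^*,-\mu^*\tau}(x,y)\alpha,v}=-\pair{\alpha,D_{\rho,\mu}(x,y)v}=\pair{D^*_{\rho,\mu}(x,y)\alpha,v}$, which gives the asserted identification $D_{\rho^*,-\mu^*\tau}=D^*_{\rho,\mu}$.

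Next, I would check each of the five axioms in order. For example, the first axiom for the dual becomes
\[
-\mu^*(z,[x,y])+\mu^*(z,x)\rho^*(y)-\mu^*(z,y)\rho^*(x)=0;
\]
applying to $\alpha$ and pairing with $v$ reduces this, after relabeling, to the second original axiom $\mu(z,[x,y])=\rho(x)\mu(z,y)-\rho(y)\mu(z,x)$. Symmetrically, the dual second axiom reduces to the original first. The third axiom, $\rho^*(\Courant{x,y,z})=[D^*(x,y),\rho^*(z)]$, is the transpose of the third original axiom and follows at once from $D_{\rho^*,-\mu^*\tau}=D^*$. For axiom four, transposition reverses factor order, and after pairing one is left with the identity $\mu(y,x)\mu(w,z)-\mu(z,x)\mu(w,y)-\mu(\Courant{y,z,w},x)-\mu(w,x)D(y,z)=0$, which is precisely the derived formula $\mu(\Courant{x,y,z},w)=\mu(x,w)\mu(z,y)-\mu(y,w)\mu(z,x)-\mu(z,w)D(x,y)$ stated just before Example \ref{ad}, under the relabeling $x\mapsto y,y\mapsto z,z\mapsto w,w\mapsto x$. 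Axiom five is essentially self-dual: order reversal exchanges the two $\mu$-terms on the left and flips the sign of the commutator on the right, and a direct renaming of variables returns axiom five of the original.

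The main obstacle is the bookkeeping in axioms four and five, where the combined effects of the transposition $\tau$, the extra sign in $-\mu^*\tau$, and the order reversal $(AB)^*=B^*A^*$ make the correct relabeling non-obvious; one must verify that every combination arising on the dual side genuinely matches either one of the five original axioms or a consequence already derived in the proposition preceding Example \ref{ad}, rather than a new relation. Once each of the five axioms is traced back in this way, the proposition is established.
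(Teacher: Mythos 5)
Your proposal is correct. The paper itself gives no proof of this proposition (it is quoted from \cite{SZ1}), but your argument is the standard direct verification one would expect: the identification $D_{\rho^*,-\mu^*\tau}=D^*_{\rho,\mu}$ is right, and each of the five dualized axioms does reduce, exactly as you trace them, to an original axiom or to the derived identity $\mu(\Courant{x,y,z},w)=\mu(x,w)\mu(z,y)-\mu(y,w)\mu(z,x)-\mu(z,w)D(x,y)$ from the proposition preceding Example \ref{ad}. One small remark on your description of axiom five: the sign flip from $[A^T,B^T]=-[A,B]^T$ is exactly cancelled by the extra minus sign in $D^*_{\rho,\mu}(x,y)=-D_{\rho,\mu}(x,y)^T$, so the commutator on the right acquires no net sign, which is consistent with your conclusion that a renaming of $z$ and $w$ recovers the original axiom.
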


The coadjoint representation of a \LYA ~is dual to the adjoint representation.
\begin{ex}
 Let $(\g,[\cdot,\cdot],\Courant{\cdot,\cdot,\cdot})$ be a Lie-Yamaguti algebra and $(\g;\ad,\huaR)$ its adjoint representation, where $\ad, \huaR$ are given in Example \ref{ad}. Then $(\g^*;\ad^*,-\huaR^*{\tau})$ is the dual representation of the adjoint representation, called the {\bf coadjoint representation}. Note that $\huaL^*:=D_{\ad^*,-\huaR^*\tau}$ is dual to $-\huaL$, i.e.,
 $$\pair{\huaL^*(x,y)\alpha,z}=-\pair{\alpha,\Courant{x,y,z}},\quad\forall x,y,z\in \g,~\alpha\in \g^*.$$
 If $(\g^*,[\cdot,\cdot]_*,\Courant{\cdot,\cdot,\cdot}_*)$ is a \LYA, and $(\g^*,\frkad,\frkR)$ is its adjoint representation, then the coadjoint representation of $(\g^*,\frkad,\frkR)$ is $(\g;\frkad^*,-\frkR^*\tau)$, where $\frkL^*=D_{\frkad^*,-\frkR^*\tau}$.
\end{ex}

Representations of a Lie-Yamaguti algebra can be characterized by the semidirect product Lie-Yamaguti algebras.

\begin{pro}\cite{Zhang1}
Let $(\g,[\cdot,\cdot],\Courant{\cdot,\cdot,\cdot})$ be a Lie-Yamaguti algebra and $V$ a vector space. Suppose that $\rho:\g \to \gl(V)$ and $\mu:\otimes^2 \g \to \gl(V)$ are linear maps. Then $(V;\rho,\mu)$ is a representation of $(\g,[\cdot,\cdot],\Courant{\cdot,\cdot,\cdot})$ if and only if there is a Lie-Yamaguti algebra structure $([\cdot,\cdot]_{\ltimes},\Courant{\cdot,\cdot,\cdot}_{\ltimes})$ on the direct sum $\g \oplus V$ which is defined to be
\begin{eqnarray*}
\label{semi1}[x+u,y+v]_{\ltimes}&=&[x,y]+\rho(x)v-\rho(y)u,\\
\label{semi2}~\Courant{x+u,y+v,z+w}_{\ltimes}&=&\Courant{x,y,z}+D(x,y)w+\mu(y,z)u-\mu(x,z)v,
\end{eqnarray*}
for all $x,y,z \in \g, ~u,v,w \in V$. This Lie-Yamaguti algebra $(\g \oplus V,[\cdot,\cdot]_{\ltimes},\Courant{\cdot,\cdot,\cdot}_{\ltimes})$ is called the {\bf semidirect product Lie-Yamaguti algebra}, and is denoted by $\g \ltimes_{\rho,\mu} V$.
\end{pro}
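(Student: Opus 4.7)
The plan is to prove the equivalence by direct expansion: the four defining axioms of a Lie-Yamaguti algebra on $\g\oplus V$ each split into a $\g$-component, which is automatic from the LY structure of $\g$, and a $V$-component, which matches precisely one of the five defining relations for a representation $(V;\rho,\mu)$.

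For the forward implication, assume $(V;\rho,\mu)$ is a representation and check each LY axiom on $\g\oplus V$ in turn. For the first (Jacobi-like) axiom, substitute $x+u,y+v,z+w$, sort terms by whether they lie in $\g$ or in $V$; the $V$-component reduces, after collecting, to
$$\bigl(\rho([x,y])-[\rho(x),\rho(y)]+\mu(y,x)-\mu(x,y)-D(x,y)\bigr)w + (\text{cyclic}),$$
which vanishes identically by the definition \eqref{rep} of $D$. The second LY axiom expands into a sum whose $V$-component is the cyclic sum in $x,y,z$ of $\mu([x,y],z)w$ together with $D$-terms, which vanishes by the first and fourth representation axioms (and the cyclic identity for $D$). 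The third LY axiom produces precisely the identity $\mu(x,[y,z]) - \rho(y)\mu(x,z) + \rho(z)\mu(x,y) = 0$ and $\rho(\Courant{x,y,z}) = [D(x,y),\rho(z)]$ upon separating which slot holds the $V$-element. The fundamental identity, expanded with the $V$-element placed successively in each of the five slots, gives five independent $V$-components corresponding exactly to the two identities involving $\mu(x,\Courant{y,z,w})$ and the bracketing relations for $D$ with $\mu$ (the last two representation axioms).

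For the converse direction, assume $(\g\oplus V, [\cdot,\cdot]_\ltimes, \Courant{\cdot,\cdot,\cdot}_\ltimes)$ is a Lie-Yamaguti algebra. Setting all $V$-entries to zero recovers the LY structure of $\g$. Then for each axiom, put exactly one entry in $V$ (with the rest in $\g$): since $V$ and $\g$ are independent summands and the axioms hold for all choices, the $V$-component must vanish for each fixed position of the $V$-element, and these components yield the five representation axioms one by one (with the identity defining $D$ emerging from the mixed $\g$-$V$ part of the first axiom).

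The main obstacle is bookkeeping rather than conceptual difficulty: the fundamental identity contains five entries, and substituting $x_i + v_i$ produces $2^5$ terms per occurrence of $\Courant{\cdot,\cdot,\cdot}_\ltimes$. The clean way to manage this is to observe that, by multilinearity, it suffices to place a single $V$-entry at a time, so one verifies each representation axiom by isolating the case where exactly one slot is in $V$. Verifying that these local checks really exhaust the $V$-component of each LY axiom (i.e., that terms with two or more $V$-entries all vanish trivially from the definition of $[\cdot,\cdot]_\ltimes$ and $\Courant{\cdot,\cdot,\cdot}_\ltimes$, since these brackets are $\g$-valued on pure-$V$ inputs only through $\rho,\mu$, which already sit in $V$) is the one point requiring care.
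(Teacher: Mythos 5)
Your proposal is correct and follows the standard direct-verification route; the paper itself offers no proof of this proposition (it is quoted from [Zhang1]), and the component-by-component expansion you describe — each LY axiom on $\g\oplus V$ splitting into a $\g$-part that holds automatically and a $V$-part that, with exactly one entry in $V$, reproduces one of the five representation identities (or the definition of $D$, or the derived identities $\sum_{\mathrm{cyc}}D([x,y],z)=0$ and $D(\Courant{x,y,z},w)+D(z,\Courant{x,y,w})=[D(x,y),D(z,w)]$, which the forward direction needs and which are recorded as consequences of the axioms in the paper's Proposition 2.5) — is exactly the intended argument, including the key observation that all terms with two or more $V$-entries vanish because $\rho(0)=\mu(0,\cdot)=\mu(\cdot,0)=D(0,\cdot)=0$. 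The only blemish is a sign slip in your displayed $V$-component of the first axiom: the correct expression is $\bigl(\rho([x,y])-[\rho(x),\rho(y)]-\mu(y,x)+\mu(x,y)+D(x,y)\bigr)w$, which indeed vanishes by the defining formula for $D$, so the conclusion stands.
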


The cohomology theory of \LYA s ~was founded in \cite{Yamaguti2}. Let $(V;\rho,\mu)$ be a representation of a Lie-Yamaguti algebra $(\g,[\cdot,\cdot],\Courant{\cdot,\cdot,\cdot})$.
\begin{itemize}
\item the set of $p$-cochains is denoted by $C^p_{\rm LieY}(\g,V)~(p \geqslant 1)$, where
\begin{eqnarray*}
C^{n+1}_{\rm LieY}(\g,V):=
\begin{cases}
\Hom(\underbrace{\wedge^2\g\otimes \cdots \otimes \wedge^2\g}_n,V)\times \Hom(\underbrace{\wedge^2\g\otimes\cdots\otimes\wedge^2\g}_{n}\otimes\g,V), & \forall n\geqslant 1,\\
\Hom(\g,V), &n=0.
\end{cases}
\end{eqnarray*}
\item the coboundary map of $p$-cochains $\de:C^{n+1}_{\rm LieY}(\g,V)\longrightarrow C^{n+2}_{\rm LieY}(\g,V) ~(n\geqslant 0)$ is defined to be
\begin{itemize}
\item[(1)] If $n\geqslant 1$, for any $(f,g)\in C^{n+1}_{\rm LieY}(\g,V)$, the coboundary map
$$\de=(\de_{\rm I},\de_{\rm II}):C^{n+1}_{\rm LieY}(\g,V)\to C^{n+2}_{\rm LieY}(\g,V),$$
$$\qquad \qquad\qquad \qquad\qquad \quad (f,g)\mapsto\Big(\de_{\rm I}(f,g),\de_{\rm II}(f,g)\Big),$$
 is given as follows
\begin{eqnarray*}
~\nonumber &&\Big(\de_{\rm I}(f,g)\Big)(\frkX_1,\cdots,\frkX_{n+1})\\
~\label{cohomo1} &=&(-1)^n\Big(\rho(x_{n+1})g(\frkX_1,\cdots,\frkX_n,y_{n+1})-\rho(y_{n+1})g(\frkX_1,\cdots,\frkX_n,x_{n+1})\\
~\nonumber &&-g(\frkX_1,\cdots,\frkX_n,[x_{n+1},y_{n+1}])\Big)\\
~\nonumber &&+\sum_{k=1}^{n}(-1)^{k+1}D_{\rho,\mu}(\frkX_k)f(\frkX_1,\cdots,\hat{\frkX_k},\cdots,\frkX_{n+1})\\
~\nonumber &&+\sum_{1\leqslant k<l\leqslant n+1}(-1)^{k}f(\frkX_1,\cdots,\hat{\frkX_k},\cdots,\frkX_k\circ\frkX_l,\cdots,\frkX_{n+1}),\\
~ \nonumber&&\\
~\nonumber &&\Big(\de_{\rm II}(f,g)\Big)(\frkX_1,\cdots,\frkX_{n+1},z)\\
~\label{cohomo2}&=&(-1)^n\Big(\mu(y_{n+1},z)g(\frkX_1,\cdots,\frkX_n,x_{n+1})-\mu(x_{n+1},z)g(\frkX_1,\cdots,\frkX_n,y_{n+1})\Big)\\
~\nonumber &&+\sum_{k=1}^{n+1}(-1)^{k+1}D_{\rho,\mu}(\frkX_k)g(\frkX_1,\cdots,\hat{\frkX_k},\cdots,\frkX_{n+1},z)\\
~\nonumber &&+\sum_{1\leqslant k<l\leqslant n+1}(-1)^kg(\frkX_1,\cdots,\hat{\frkX_k},\cdots,\frkX_k\circ\frkX_l,\cdots,\frkX_{n+1},z)\\
~\nonumber &&+\sum_{k=1}^{n+1}(-1)^kg(\frkX_1,\cdots,\hat{\frkX_k},\cdots,\frkX_{n+1},\Courant{x_k,y_k,z}),
\end{eqnarray*}
where $\frkX_i=x_i\wedge y_i\in\wedge^2\g~(i=1,\cdots,n+1),~z\in \g$, and the notation $\frkX_k\circ\frkX_l$ means that
$$\frkX_k\circ\frkX_l:=\Courant{x_k,y_k,x_l}\wedge y_l+x_l\wedge\Courant{x_k,y_k,y_l}.$$

\item[(2)] If $n=0$,  for any element $f \in C^1_{\rm LieY}(\g,V)$, the coboundary map
$$\de:C^1_{\rm LieY}(\g,V)\to C^2_{\rm LieY}(\g,V),$$
$$\qquad \qquad \qquad f\mapsto \Big(\de_I(f),\de_{II}(f)\Big),$$
is given by
\begin{eqnarray*}
\label{1cochain}\Big(\de_{\rm I}(f)\Big)(x,y)&=&\rho(x)f(y)-\rho(y)f(x)-f([x,y]),\\
~ \label{2cochain}\Big(\de_{\rm II}(f)\Big)(x,y,z)&=&D_{\rho,\mu}(x,y)f(z)+\mu(y,z)f(x)-\mu(x,z)f(y)-f(\Courant{x,y,z}),\quad \forall x,y, z\in \g.
\end{eqnarray*}
\end{itemize}
\end{itemize}

In particular, we obtain the precise formula of $1$-cocycle.
\begin{defi}
Let $(\g,\br,,\ltp)$ be a \LYA ~and $(V;\rho,\mu)$ a representation of $\g$. A linear map $f:\g\longrightarrow V$ is called a {\bf $1$-cocycle} of $\g$ with respect to $(V;\rho,\mu)$ if $f$ satisfies
\begin{eqnarray*}
f([x,y])&=&\rho(x)f(y)-\rho(y)f(x),\\
f(\Courant{x,y,z})&=&D(x,y)f(z)+\mu(y,z)f(x)-\mu(x,z)f(y),\quad\forall x,y,z\in \g.
\end{eqnarray*}
\end{defi}

\begin{ex}
A {\bf derivation} on a \LYA ~$(\g,\br,,\ltp)$ is a linear map $\Delta:\g\longrightarrow\g$ such that
 \begin{eqnarray*}
 \Delta([x,y])&=&[\Delta(x),y]+[x,\Delta(y)],\\
 \Delta(\Courant{x,y,z})&=&\Courant{\Delta(x),y,z}+\Courant{x,\Delta(y),z}+\Courant{x,y,\Delta(z)},\quad\forall x,y,z \in \g.
 \end{eqnarray*}
Thus a derivation is a $1$-cocycle of $\g$ with respect to the adjoint representation $(\g;\ad,\huaR)$.
\end{ex}

\smallskip
\section{Relative Rota-Baxter operators, the classical Yang-Baxter equation, and local cocycle Lie-Yamaguti bialgebras}
In this section, we define the classical Yang-Baxter equation in \LYA s and clarify the relationship between its solutions and relative Rota-Baxter operators. Moreover as byproducts, we generalize conclusions given by Bai and Semonov-Tian-Shansky. Finally, we give the definition of local cocycle Lie-Yamaguti bialgebras. %Unfortunately, a solution to the classical Yang-Baxter equation fails to give rise to a local cocycle Lie-Yamaguti bialgebra structure.
First of all, let us recall some notions and conclusions in \cite{SZ1} of relative Rota-Baxter operators and pre-\LYA s.
\begin{defi}{\rm (\cite{SZ1})}
Let $(\g,[\cdot,\cdot],\Courant{\cdot,\cdot,\cdot})$ be a Lie-Yamaguti algebra with a representation $(V;\rho,\mu)$ and $T:V\to \g$ a linear map. If $T$ satisfies
\begin{eqnarray*}
[Tu,Tv]&=&T\Big(\rho(Tu)v-\rho(Tv)u\Big),\\
\Courant{Tu,Tv,Tw}&=&T\Big(D(Tu,Tv)w+\mu(Tv,Tw)u-\mu(Tu,Tw)v\Big), \quad\forall u,v,w \in V,
\end{eqnarray*}
then we call $T$ a {\bf relative Rota-Baxter operator} on $\g$ with respect to the representation $(V;\rho,\mu)$.
\end{defi}

\begin{defi}{\rm (\cite{SZ1})}
A {\bf pre-Lie-Yamaguti algebra} is a vector space $A$ with a bilinear operation $*:\otimes^2A \to A$ and a trilinear operation $\{\cdot,\cdot,\cdot\} :\otimes^3A \to A$ such that for all $x,y,z,w,t \in A$
\begin{eqnarray}
~ &&\label{pre2}\{z,[x,y]_C,w\}-\{y*z,x,w\}+\{x*z,y,w\}=0,\\
~ &&\label{pre4}\{x,y,[z,w]_C\}=z*\{x,y,w\}-w*\{x,y,z\},\\
~ &&\label{pre5}\{\{x,y,z\},w,t\}-\{\{x,y,w\},z,t\}-\{x,y,\{z,w,t\}_D\}-\{x,y,\{z,w,t\}\}\\
~ &&\nonumber+\{x,y,\{w,z,t\}\}+\{z,w,\{x,y,t\}\}_D=0,\\
~ &&\label{pre6}\{z,\{x,y,w\}_D,t\}+\{z,\{x,y,w\},t\}-\{z,\{y,x,w\},t\}+\{z,w,\{x,y,t\}_D\}\\
~ &&\nonumber+\{z,w,\{x,y,t\}\}-\{z,w,\{y,x,t\}\}=\{x,y,\{z,w,t\}\}_D-\{\{x,y,z\}_D,w,t\},\\
~&&\label{pre7}\{x,y,z\}_D*w+\{x,y,z\}*w-\{y,x,z\}*w=\{x,y,z*w\}_D-z*\{x,y,w\}_D,
\end{eqnarray}
where the commutator
$[\cdot,\cdot]_C:\wedge^2\g \to \g$ and $\{\cdot,\cdot,\cdot\}_D: \otimes^3 A \to A$ are defined by for all $x,y,z \in A$,
\begin{eqnarray}
~[x,y]_C:=x*y-y*x, \quad \forall x,y \in A,\label{pre10}
\end{eqnarray}
and
\begin{eqnarray}
\{x,y,z\}_D:=\{z,y,x\}-\{z,x,y\}+(y,x,z)-(x,y,z), \label{pre3}
\end{eqnarray}
respectively. Here $(\cdot,\cdot,\cdot)$ denotes the associator: $(x,y,z):=(x*y)*z-x*(y*z)$. It is obvious that $\{\cdot,\cdot,\cdot\}_D$ is skew-symmetric with respect to the first two variables. We denote a pre-Lie-Yamaguti algebra by $(A,*,\{\cdot,\cdot,\cdot\})$.
\end{defi}

Let $(A,*,\{\cdot,\cdot,\cdot\})$ be a pre-Lie-Yamaguti algebra. Define
\begin{itemize}
\item a pair of operations $(\br_{_C},\ltp_C)$ to be
\begin{eqnarray*}
\label{subLY1}[x,y]_C&=&x*y-y*x,\\
~\label{subLY2}\Courant{x,y,z}_C&=&\{x,y,z\}_D+\{x,y,z\} -\{y,x,z\}, \quad \forall x,y,z \in \g,
\end{eqnarray*}
where $\{\cdot,\cdot,\cdot\}_D$ is given by \eqref{pre3}.
\item linear maps
$$\Ad: A \to \gl(A), \quad R: \otimes^2A \to \gl(A)$$
 to be
 $$x \mapsto \Ad_x, \quad (x,y)\mapsto R(x,y)$$
 respectively, where $\Ad_xz=x*z$ and $R(x,y)z=\{z,x,y\}$ for all $z \in A$.
\end{itemize}

The following proposition is the Theorem 3.11 in \cite{SZ1}.% and one can read it for the details of proof.

\begin{pro}\label{subad}{\rm (\cite{SZ1})}
With the above notations, then  we have
\begin{itemize}
\item [\rm (i)] the operation $(\br_{_C},\ltp_C)$ defines a Lie-Yamaguti algebra structure on $A$. This Lie-Yamaguti algebra $(A,\br_{_C},\ltp_C)$ is called the {\bf sub-adjacent \LYA} and is denoted by $A^c$;
\item [\rm (ii)] the triple $(A;\Ad,R)$ is a representation of the sub-adjacent Lie-Yamaguti algebra $A^c$ on $A$. Furthermore, the identity map $\Id:A\longrightarrow A$ is a relative Rota-Baxter operator on $A^c$ with respect to the representation $(A;\Ad,R)$, where
    $$L:=D_{\Ad,R}:\wedge^2A\longrightarrow \gl(A),\quad (x,y)\mapsto L(x,y)$$
    is given by
    $$L(x,y)z=\{x,y,z\}_D,\quad\forall z\in A.$$
\end{itemize}
\end{pro}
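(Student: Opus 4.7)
The plan is to treat the two parts separately, each reducing to a direct verification of Lie-Yamaguti or representation axioms against the pre-Lie-Yamaguti defining axioms \eqref{pre2}--\eqref{pre7}. First I would observe the skew-symmetries: $[\cdot,\cdot]_C$ is skew-symmetric by its definition as a commutator, and $\Courant{\cdot,\cdot,\cdot}_C$ is skew-symmetric in its first two arguments because $\{\cdot,\cdot,\cdot\}_D$ is by the definition \eqref{pre3}, while the residue $\{x,y,z\} - \{y,x,z\}$ is manifestly skew in $x,y$.

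For part (i), I would verify the four Lie-Yamaguti axioms of Definition \ref{LY} in order. The first axiom follows after expanding both sides: the Jacobiator of a commutator product $*$ equals, up to sign, the cyclic sum of associators $(x,y,z)$, which is exactly what the cyclic sum of $\{x,y,z\}_D + \{x,y,z\} - \{y,x,z\}$ reduces to. The second axiom is \eqref{pre2} written in the $\Courant{\cdot,\cdot,\cdot}_C$--notation after substituting the definitions and using that terms of type $\{x*z,y,w\}$ regroup into $\Courant{\cdot,\cdot,\cdot}_C$--brackets. The third axiom corresponds to \eqref{pre4}, and the fundamental identity is obtained by combining \eqref{pre5}, \eqref{pre6}, and \eqref{pre7}, with the various $\{\cdot,\cdot,\cdot\}_D$-terms supplied by \eqref{pre6} and the remaining $\{\cdot,\cdot,\cdot\}$-terms absorbed via \eqref{pre5}.

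For part (ii), I would first compute $D_{\Ad,R}$ from its defining formula \eqref{rep}:
\begin{eqnarray*}
D_{\Ad,R}(x,y)z &=& R(y,x)z - R(x,y)z + \Ad_x\Ad_y z - \Ad_y\Ad_x z - \Ad_{[x,y]_C}z \\
&=& \{z,y,x\} - \{z,x,y\} + (y,x,z) - (x,y,z),
\end{eqnarray*}
which matches $\{x,y,z\}_D$ by \eqref{pre3}, so $L$ acts as claimed. I would then check the five axioms in Definition \ref{defi:representation}: \eqref{RLYb} and \eqref{RLYd} match \eqref{pre2} and \eqref{pre4} respectively after substituting $\rho = \Ad$ and $\mu = R$, while \eqref{RLYe}, \eqref{RYT4}, \eqref{RLY5} correspond to \eqref{pre5}, \eqref{pre6}, \eqref{pre7}. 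Finally, that $\Id \colon A \to A$ is a relative Rota-Baxter operator on $A^c$ with respect to $(A;\Ad,R)$ is essentially tautological: substituting $T = \Id$ into the defining identities recovers precisely
\begin{eqnarray*}
[x,y]_C &=& \Ad_x y - \Ad_y x = x*y - y*x, \\
\Courant{x,y,z}_C &=& D(x,y)z + R(y,z)x - R(x,z)y = \{x,y,z\}_D + \{x,y,z\} - \{y,x,z\},
\end{eqnarray*}
which are exactly the definitions of $[\cdot,\cdot]_C$ and $\Courant{\cdot,\cdot,\cdot}_C$.

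The main obstacle will be verifying the fundamental identity in (i) together with the axioms \eqref{RYT4} and \eqref{RLY5} in (ii). Each of these expansions produces a dozen or more terms built from iterated $*$ and $\{\cdot,\cdot,\cdot\}$, and the matching with \eqref{pre5}--\eqref{pre7} is essentially a careful bookkeeping of terms grouped by their symmetrization pattern and by how $\{\cdot,\cdot,\cdot\}_D$ unfolds via \eqref{pre3}. Once the dictionary between the two sides is fixed, however, every identity to be checked is already encoded in one of the seven pre-Lie-Yamaguti axioms, so no genuinely new computation is needed beyond the bookkeeping.
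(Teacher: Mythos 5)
The paper offers no proof of this proposition --- it is quoted from \cite{SZ1} (Theorem 3.11 there) --- so your direct verification against the pre-Lie-Yamaguti axioms is the right, and essentially the only, approach. Your two central computations are correct: $D_{\Ad,R}(x,y)z=\{z,y,x\}-\{z,x,y\}+(y,x,z)-(x,y,z)=\{x,y,z\}_D$, and the observation that $T=\Id$ being a relative Rota-Baxter operator is tautological given the definitions of $[\cdot,\cdot]_C$ and $\Courant{\cdot,\cdot,\cdot}_C$. Your treatment of the first Lie-Yamaguti axiom is also right: in the cyclic sum of $\Courant{x,y,z}_C$ the $\{\cdot,\cdot,\cdot\}$-terms cancel and only the associator terms survive, which exactly cancel the Jacobiator of the commutator, so no pre-Lie-Yamaguti axiom is even needed there. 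Two bookkeeping corrections, though. First, your dictionary for part (ii) is cyclically shifted: after substituting $\rho=\Ad$, $\mu=R$ and relabelling variables, \eqref{RLYe} is precisely \eqref{pre7}, \eqref{RYT4} is precisely \eqref{pre5} (using $-\{x,y,\{z,w,t\}_D\}-\{x,y,\{z,w,t\}\}+\{x,y,\{w,z,t\}\}=-\{x,y,\Courant{z,w,t}_C\}$), and \eqref{RLY5} is precisely \eqref{pre6} --- not the pairing you wrote. Second, in part (i) the second and third Lie-Yamaguti axioms do \emph{not} each reduce to a single pre-axiom: expanding $\Courant{[x,y]_C,z,w}_C$ produces, via $\{\cdot,\cdot,\cdot\}_D$, terms of the form $\{w,z,[x,y]_C\}$ and associators of $[x,y]_C$, so the verification of the second axiom needs \eqref{pre4} (and associator manipulations) in addition to \eqref{pre2}, and similarly the third axiom needs \eqref{pre7} alongside \eqref{pre4}. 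These are corrections to the bookkeeping, not to the strategy; with the dictionary fixed the argument goes through.
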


Next, we introduce some notations and terminologies. In this section, by $r=\sum_ix_i\otimes y_i\in \otimes^2\g$ we always mean a $2$-tensor.
First, $r=\sum_ix_i\otimes y_i\in \otimes^2\g$ can be embedded into an $n$-tensor $r_{pq}\in \otimes^n\g~(n\geqslant 2)$ in the following rule:
$$r_{pq}:=\sum_i z_{i1}\otimes\cdots\otimes z_{in},$$
where
\begin{eqnarray*}
z_{ij}=
\begin{cases}
x_i, & j=p,\\
y_i, & j=q,\\
1, &  i\neq p,q,
\end{cases}
\end{eqnarray*}
for any $1\leqslant p\neq q\leqslant n$.

Let $(\g,\br,,\ltp)$ be a \LYA, we define $[r,r]\in \otimes^3\g$ and $\Courant{r,r,r}\in \otimes^4\g$ respectively to be
\begin{eqnarray}
\label{r1}[r,r]&=&[r_{12},r_{13}]+[r_{12},r_{23}]+[r_{13},r_{23}],\\
~\label{r2}\Courant{r,r,r}&=&\Courant{r_{31},r_{32},r_{43}}+\Courant{r_{13},r_{41},r_{12}}+\Courant{r_{42},r_{23},r_{21}}+\Courant{r_{41},r_{42},r_{43}}.
\end{eqnarray}
Here ${r_{pq}}'s$ in Eqs. \eqref{r1} and \eqref{r2} are the embedded $3$-tensor and the $4$-tensor by $r\in \otimes^2\g$ respectively.
More precisely, we have
\begin{eqnarray*}
[r,r]&=&\sum_{ij}\Big([x_i,x_j]\otimes y_i\otimes y_j+x_i\otimes[y_i,x_j]\otimes y_j+x_i\otimes x_j\otimes[y_i,y_j]\Big),\\
\Courant{r,r,r}&=&\sum_{ijk}\Big(\llbracket y_k,x_i,x_j\rrbracket \otimes y_i\otimes y_j\otimes x_k+y_j\otimes\llbracket y_k,x_i.x_j\rrbracket\otimes y_i\otimes x_k\\
~ &&~~+y_i\otimes y_j\otimes\llbracket x_i,x_j,y_k\rrbracket\otimes x_k+y_i\otimes y_j\otimes y_k\otimes\llbracket x_i,x_j,x_k\rrbracket\Big).
\end{eqnarray*}

Define two linear maps $\delta:\g\longrightarrow\otimes^2\g$ and $\omega:\g\longrightarrow\otimes^3\g$ respectively to be
\begin{eqnarray}
\label{cocycle1}\delta(x)&:=&\sum_i\Big([x,x_i]\otimes y_i+x_i\otimes[x,y_i]\Big),\\
\label{cocycle2}\omega(x)&:=&\sum_{ij}\Big(\llbracket x,x_i,x_j \rrbracket\otimes y_j\otimes y_i+y_j\otimes\llbracket x_i,x,x_j \rrbracket\otimes y_i
+y_j\otimes y_i\otimes \llbracket x_i,x_j,x \rrbracket\Big), ~~\forall x\in \g.
\end{eqnarray}
In the sequel, two linear operations $\delta^*:\otimes^2\g^*\longrightarrow\g^*$ and $\omega^*:\otimes^3\g^*\longrightarrow\g^*$ are denoted by $\br_{_*}$ and $\ltp_*$ respectively.

Set
\begin{eqnarray}\label{c1}
\begin{cases}
\delta_1(x)&:=\sum_ix_i\otimes[x,y_i],\\
\delta_2(x)&:=\sum_i[x,x_i]\otimes y_i,
\end{cases}
\end{eqnarray}
and
\begin{eqnarray}\label{c2}
\begin{cases}
\omega_1(x)&:=\sum_{ij}\llbracket x,x_i,x_j \rrbracket\otimes y_j\otimes y_i,\\
\omega_2(x)&:=\sum_{ij}y_j\otimes\llbracket x_i,x,x_j \rrbracket\otimes y_i,\\
\omega_3(x)&:=\sum_{ij}y_j\otimes y_i\otimes \llbracket x_i,x_j,x \rrbracket,
\end{cases}
\end{eqnarray}
for all $x\in \g$.

\begin{pro}\label{skew}
Let $(\g,\br,,\ltp)$ be a \LYA ~and $r\in \otimes^2\g$. Suppose that $r$ is skew-symmetric, and that $\delta$ and $\omega$ are induced by $r$ as in Eqs. \eqref{cocycle1} and \eqref{cocycle2}. Then $\delta^*:\otimes^2\g^*\longrightarrow\g^*$ is skew-symmetric and $\omega^*:\otimes^3\g^*\longrightarrow\g^*$ is skew-symmetric in the first two variables.
\end{pro}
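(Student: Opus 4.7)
The plan is to reduce the statement about the duals $\delta^*$ and $\omega^*$ to symmetry properties of the tensors $\delta(x)\in\otimes^2\g$ and $\omega(x)\in\otimes^3\g$ themselves. By the defining pairing $\pair{\delta^*(\xi\otimes\eta),x}=\pair{\xi\otimes\eta,\delta(x)}$, the map $\delta^*$ is skew-symmetric if and only if $\tau(\delta(x))=-\delta(x)$ for every $x\in\g$; and likewise $\omega^*$ is skew-symmetric in its first two arguments if and only if $\sigma_{12}(\omega(x))=-\omega(x)$ for every $x\in\g$. So the task reduces to checking these two tensorial symmetries.

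For $\delta$: write $r=\sum_i x_i\otimes y_i$ and note that skew-symmetry of $r$ says $\sum_i(x_i\otimes y_i+y_i\otimes x_i)=0$. Expanding \eqref{cocycle1} and regrouping, one finds
\[
\delta(x)+\tau(\delta(x))=(\ad_x\otimes\Id+\Id\otimes\ad_x)\Bigl(\sum_i(x_i\otimes y_i+y_i\otimes x_i)\Bigr)=0,
\]
which is the desired skew-symmetry of $\delta(x)$, hence of $\delta^*$.

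For $\omega$: use the decomposition $\omega=\omega_1+\omega_2+\omega_3$ from \eqref{c2}. Skew-symmetry of the ternary bracket in its first two slots gives $\llbracket x_i,x,x_j\rrbracket=-\llbracket x,x_i,x_j\rrbracket$, and hence $\omega_2(x)=-\sigma_{12}(\omega_1(x))$. Applying $\sigma_{12}$ to this identity (and using $\sigma_{12}^2=\Id$) yields at once
\[
\omega_1(x)+\sigma_{12}(\omega_2(x))=0,\qquad \omega_2(x)+\sigma_{12}(\omega_1(x))=0.
\]
For the remaining piece, relabel the summation indices $i\leftrightarrow j$ in $\omega_3(x)=\sum_{ij}y_j\otimes y_i\otimes\llbracket x_i,x_j,x\rrbracket$ and again invoke the skew-symmetry of $\llbracket\cdot,\cdot,\cdot\rrbracket$ in the first two slots to conclude that $\omega_3(x)=-\sigma_{12}(\omega_3(x))$. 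Adding the three contributions gives $\omega(x)+\sigma_{12}(\omega(x))=0$, as required.

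There is no real obstacle here; the only mild nuisance is keeping track of which tensor slot the binary or ternary bracket occupies when comparing $\omega_1,\omega_2,\omega_3$ under $\sigma_{12}$. Note also that the skew-symmetry hypothesis on $r$ is used only in the argument for $\delta$, while the $\omega$ part uses only the built-in skew-symmetry of the ternary bracket of a Lie-Yamaguti algebra.
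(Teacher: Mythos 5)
Your proof is correct and follows essentially the same route as the paper: both arguments reduce to checking that $\sigma_{12}$ interchanges $\omega_1$ and $-\omega_2$ and fixes $\omega_3$ up to sign (using only the built-in skew-symmetry of $\llbracket\cdot,\cdot,\cdot\rrbracket$ in its first two slots), and that $\tau$ negates $\delta(x)$ because $r$ is skew-symmetric. Your packaging of the $\delta$ step as $(\ad_x\otimes\Id+\Id\otimes\ad_x)(r+\tau(r))=0$ is a slightly tidier way of writing the paper's identities $\sigma_{12}\delta_1(x)=-\delta_2(x)$ and $\sigma_{12}\delta_2(x)=-\delta_1(x)$, but it is the same computation.
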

\begin{proof}
Indeed, for any $x\in \g$, we have
\begin{eqnarray*}
\sigma_{12}\omega_1(x)&=&\sum_{ij}y_j\otimes \llbracket x,x_i,x_j \rrbracket\otimes y_i=-\sum_{ij}y_j\otimes\llbracket x_i,x,x_j \rrbracket\otimes y_i=-\omega_2(x),\\
\sigma_{12}\omega_2(x)&=&\sum_{ij}\llbracket x_i,x,x_j \rrbracket\otimes y_j\otimes y_i=-\sum_{ij}\llbracket x,x_i,x_j \rrbracket\otimes y_j\otimes y_i=-\omega_1(x),\\
\sigma_{12}\omega_3(x)&=&\sum_{ij}y_i\otimes y_j\otimes\llbracket x_i,x_j,x \rrbracket=-\sum_{ij}y_i\otimes y_j\otimes\llbracket x_j,x_i,x \rrbracket=-\omega_3(x).
\end{eqnarray*}
This shows that $\omega^*$ is skew-symmetric in the first two variables. Moreover, since $r$ is skew-symmetric, we have $\sigma_{12}\delta_1(x)=-\delta_2(x)$ and  $\sigma_{12}\delta_2(x)=-\delta_1(x)$ for any $x\in \g$, and thus $\delta^*$ is skew-symmetric. This finishes the proof.
\end{proof}

A $2$-tensor $r$ induces a linear map $r^\sharp:\g^*\longrightarrow\g$ defined to be
\begin{eqnarray}
\langle r^\sharp(\xi),\eta\rangle=\langle r,\xi\otimes\eta\rangle,\quad \forall \xi,\eta\in \g^*.\label{rsharp}
\end{eqnarray}
Similarly, a $2$-tensor $\huaB\in\otimes^2\g^*$ induces a linear map $\huaB^\natural:\g\longrightarrow\g^*$ defined by
\begin{eqnarray}
\langle\huaB^\natural(x),y\rangle=\huaB(x,y), \quad \forall x,y\in \g.\label{Bna}
\end{eqnarray}

\begin{pro}\label{pro:fund}
Let $(\g,\br,,\ltp)$ be a \LYA~ and $r=\sum_ix_i\otimes y_i$. Suppose that $r$ is skew-symmetric, and that $\delta:\g\longrightarrow\otimes^2\g$ and $\omega:\g\longrightarrow\otimes^3\g$ are defined by $r$ as in Eqs. \eqref{cocycle1} and \eqref{cocycle2} respectively. Then we have
\begin{eqnarray*}
[\xi,\eta]_*&=&\ad^*_{r^\sharp(\xi)}\eta-\ad^*_{r^\sharp(\eta)}\xi,\\
\Courant{\xi,\eta,\zeta}_*&=&\huaL^*(r^\sharp(\xi),r^\sharp(\eta))\zeta-\huaR^*(r^\sharp(\zeta),r^\sharp(\eta))\xi+\huaR^*(r^\sharp(\zeta),r^\sharp(\xi))\eta,
\quad \forall \xi,\eta,\zeta\in \g^*.
\end{eqnarray*}
\end{pro}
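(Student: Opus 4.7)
The plan is to unravel both identities by taking the standard dual pairing with an arbitrary $x\in\g$ and matching the resulting scalar expressions term by term, using only three ingredients: the defining relations \eqref{cocycle1}--\eqref{cocycle2} of $\delta$ and $\omega$, the definitions of $\ad^*$, $\huaR^*$ and $\huaL^*$ as transposes, and the skew-symmetry of $r$ in the form $\sum_i x_i\ot y_i=-\sum_i y_i\ot x_i$.

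For the binary identity, I would start from
$$\langle[\xi,\eta]_*,x\rangle=\langle\delta^*(\xi\ot\eta),x\rangle=\langle\xi\ot\eta,\delta(x)\rangle
=\sum_i\langle\xi,[x,x_i]\rangle\langle\eta,y_i\rangle+\sum_i\langle\xi,x_i\rangle\langle\eta,[x,y_i]\rangle.$$
Since $r^\sharp(\eta)=\sum_i\langle\eta,x_i\rangle y_i$ by \eqref{rsharp}, the term $\sum_i\langle\xi,x_i\rangle\langle\eta,[x,y_i]\rangle$ equals $\langle\xi,[x,r^\sharp(\eta)]\rangle=-\langle\ad^*_{r^\sharp(\eta)}\xi,x\rangle$. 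For the other summand, applying skew-symmetry of $r$ to the formal expression $\sum_i\langle\xi,[x,x_i]\rangle\langle\eta,y_i\rangle$ yields $-\sum_i\langle\xi,[x,y_i]\rangle\langle\eta,x_i\rangle=-\langle\xi,[x,r^\sharp(\eta)]\rangle$\,--- wait: the correct move is instead to rewrite $\sum_i\langle\xi,[x,x_i]\rangle\langle\eta,y_i\rangle$, via the swap $x_i\leftrightarrow y_i$ allowed by skew-symmetry, as $-\sum_i\langle\xi,[x,y_i]\rangle\langle\eta,x_i\rangle$, which is precisely $\langle\ad^*_{r^\sharp(\xi)}\eta,x\rangle$. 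Summing the two pieces gives the first identity.

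For the ternary identity I would proceed in parallel: each of the three summands $\omega_1,\omega_2,\omega_3$ defined by \eqref{c2} should match exactly one of the three terms on the right-hand side. Explicitly, I would expand
$$\langle\Courant{\xi,\eta,\zeta}_*,x\rangle=\langle\xi\ot\eta\ot\zeta,\omega_1(x)+\omega_2(x)+\omega_3(x)\rangle,$$
and show, piece by piece, that
$$\langle\xi\ot\eta\ot\zeta,\omega_3(x)\rangle=\langle\huaL^*(r^\sharp(\xi),r^\sharp(\eta))\zeta,x\rangle,$$
$$\langle\xi\ot\eta\ot\zeta,\omega_1(x)\rangle=-\langle\huaR^*(r^\sharp(\zeta),r^\sharp(\eta))\xi,x\rangle,$$
$$\langle\xi\ot\eta\ot\zeta,\omega_2(x)\rangle=\langle\huaR^*(r^\sharp(\zeta),r^\sharp(\xi))\eta,x\rangle.$$
The first of these three is immediate from $\huaL(a,b)z=\Courant{a,b,z}$ and the definition of $\huaL^*$, once one inserts $r^\sharp(\xi)=\sum_j\langle\xi,x_j\rangle y_j$ and $r^\sharp(\eta)=\sum_i\langle\eta,x_i\rangle y_i$ into $\llbracket r^\sharp(\xi),r^\sharp(\eta),x\rrbracket$. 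The remaining two are of the same shape: they require one (resp.\ two) applications of the skew-symmetry relation $\sum_k x_k\ot y_k=-\sum_k y_k\ot x_k$ to move $y$'s and $x$'s into the correct slots so that the contractions with $\xi,\eta,\zeta$ match the definition $\huaR(a,b)z=\Courant{z,a,b}$ dualized.

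The only real obstacle is bookkeeping: three independent summation indices, three covectors, and the sign flips produced by each use of skew-symmetry must be tracked carefully, which is why I would organize the verification as three separate matchings $\omega_k\leftrightarrow$ (the $k$-th term on the right) rather than as one monolithic computation. Nothing beyond linearity of the pairing, the defining formulas, and one or two uses of $r=-\tau(r)$ per term is needed.
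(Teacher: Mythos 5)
Your proposal follows essentially the same route as the paper's proof: reduce both identities to pairings with an arbitrary $x\in\g$ and match $\delta_1,\delta_2$ and $\omega_1,\omega_2,\omega_3$ term by term against the summands of the right-hand side using skew-symmetry of $r$ and the definitions of the dualized maps; in particular your assignment $\omega_3\leftrightarrow\huaL^*(r^\sharp(\xi),r^\sharp(\eta))\zeta$, $\omega_1\leftrightarrow-\huaR^*(r^\sharp(\zeta),r^\sharp(\eta))\xi$, $\omega_2\leftrightarrow\huaR^*(r^\sharp(\zeta),r^\sharp(\xi))\eta$ is exactly the one the paper verifies. The only slip is in the binary case: with $r^\sharp(\xi)=\sum_i\langle\xi,x_i\rangle y_i$, the summand $\sum_i\langle\xi,x_i\rangle\langle\eta,[x,y_i]\rangle$ equals $\langle\eta,[x,r^\sharp(\xi)]\rangle=\langle\ad^*_{r^\sharp(\xi)}\eta,x\rangle$ (not $-\langle\ad^*_{r^\sharp(\eta)}\xi,x\rangle$), while the other summand, after one use of skew-symmetry, gives $-\langle\xi,[x,r^\sharp(\eta)]\rangle=-\langle\ad^*_{r^\sharp(\eta)}\xi,x\rangle$ --- so your two attributions are interchanged, but since only their sum matters the conclusion is unaffected.
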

\begin{proof}
It is sufficient to prove that
\begin{eqnarray}
\label{br1}\langle\delta(x),\xi\otimes\eta\rangle&=&\langle x,[\xi,\eta]_*\rangle,\\
\label{br2}\langle\omega(x),\xi\otimes\eta\otimes\zeta\rangle&=&\langle x,\Courant{\xi,\eta,\zeta}_*\rangle,\quad \forall x\in \g^*,\xi,\eta,\zeta\in \g^*.
\end{eqnarray}
Let $r=\sum_ix_i\otimes y_i$. Since $r$ is skew-symmetric, we have
\begin{eqnarray*}
\langle x,\huaR^*(r^\sharp(\zeta),r^\sharp(\eta))\xi\rangle&=&-\langle\Courant{x,r^\sharp(\zeta),r^\sharp(\eta)},\xi\rangle=-\langle r^\sharp(\zeta),\huaR^*(x,r^\sharp(\eta))\xi\rangle\\
~ &=&-\langle r,\zeta\otimes\huaR^*(x,r^\sharp(\eta))\xi\rangle=-\sum_i\langle y_i,\zeta\rangle\langle x_i,\huaR^*(x,r^\sharp(\eta))\xi\rangle\\
~ &=&-\sum_i\langle y_i,\zeta\rangle\langle r^\sharp(\eta),\huaL^*(x_i,x)\xi\rangle=-\sum_i\langle y_i,\zeta\rangle\langle r,\eta\otimes \huaL^*(x_i,x)\xi\rangle\\
~ &=&\sum_{ij}\langle y_i,\zeta\rangle\langle y_j,\eta\rangle\langle x_j,\huaL^*(x_i,x)\xi\rangle\\
~ &=&-\langle\sum_{ij}\Courant{x,x_i,x_j}\otimes y_j\otimes y_i,\xi\otimes\eta\otimes\zeta\rangle\\
~ &=&-\langle\omega_1(x),\xi\otimes\eta\otimes\zeta\rangle.
\end{eqnarray*}
Hence, we obtain that
$$-\langle x,\huaR^*(r^\sharp(\zeta),r^\sharp(\eta))\xi\rangle=\langle\omega_1(x),\xi\otimes\eta\otimes\zeta\rangle.$$
Moreover, we also have that
\begin{eqnarray*}
\langle x,\huaL^*(r^\sharp(\xi),r^\sharp(\eta))\zeta\rangle&=&-\pair{\Courant{r^\sharp(\xi),r^\sharp(\eta),x},\zeta}\\
~ &=&\pair{r^\sharp(\xi),\huaR^*(r^\sharp(\eta),x)\zeta}\\
~ &=&\pair{r,\xi\otimes\huaR^*(r^\sharp(\eta),x)\zeta}\\
~ &=&\sum_j\pair{y_j,\xi}\pair{x_j,\huaR^*(r^\sharp(\eta),x)\zeta}\\
~ &=&-\sum_j\pair{y_j,\xi}\pair{r^\sharp(\eta),\huaR^*(x_j,x)\zeta}\\
~ &=&-\sum_j\pair{y_j,\xi}\pair{r,\eta\otimes\huaR^*(x_j,x)\zeta}\\
~ &=&-\sum_{ij}\pair{y_j,\xi}\pair{y_i,\eta}\pair{x_i,\huaR^*(x_j,x)\zeta}\\
~ &=&\pair{\sum_{ij}y_j\otimes y_i\otimes\Courant{x_i,x_j,x},\xi\otimes\eta\otimes\zeta}\\
~ &=&\pair{\omega_3(x),\xi\otimes\eta\otimes\zeta},\\
~ \pair{x,\huaR^*(r^\sharp(\zeta),r^\sharp(\xi))\eta}&=&-\pair{\Courant{x,r^\sharp(\zeta),r^\sharp(\xi)},\eta}\\
~ &=&-\pair{r^\sharp(\zeta),\huaR^*(x,r^\sharp(\xi))\eta}\\
~ &=&-\pair{r,\zeta\otimes\huaR^*(x,r^\sharp(\xi))\eta}\\
~ &=&-\sum_i\pair{y_i,\zeta}\pair{x_i,\huaR^*(x,r^\sharp(\xi))\eta}\\
~ &=&-\sum_i\pair{y_i,\zeta}\pair{r^\sharp(\xi),\huaL^*(x_i,x)\eta}\\
~ &=&-\sum_i\pair{y_i,\zeta}\pair{r,\xi\otimes\huaL^*(x_i,x)\eta}\\
~ &=&\sum_{ij}\pair{y_i,\zeta}\pair{y_j,\xi}\pair{x_j,\huaL^*(x_i,x)\eta}\\
~ &=&\pair{\sum_{ij}y_j\otimes\Courant{x_i,x,x_j}\otimes y_i,\xi\otimes\eta\otimes\zeta}\\
~ &=&\pair{\omega_2(x),\xi\otimes\eta\otimes\zeta}.
\end{eqnarray*}
%$$\langle x,\huaL^*(r^\sharp(\xi),r^\sharp(\eta))\zeta\rangle=\langle\omega_3(x),\xi\otimes\eta\otimes\zeta\rangle,\quad
%\langle x,\huaR^*(r^\sharp(\zeta),r^\sharp(\xi))\eta\rangle=\langle\omega_2(x),\xi\otimes\eta\otimes\zeta\rangle.$$
This gives Eq. \eqref{br2}. And Eq. \eqref{br1} can be proved similarly, so we omit the details. This finishes the proof.
\end{proof}

\begin{thm}\label{THM:CYBE}
Let $(\g,\br,,\ltp)$ be a \LYA ~and $r\in \otimes^2\g$ skew-symmetric and nondegenerate. Then $r$ satisfies
\begin{eqnarray}\label{CYBE}
\begin{cases}
~[r,r]&=0,\\
~\Courant{r,r,r}&=0,
\end{cases}
\end{eqnarray}
if and only if $r^\sharp:\g^*\longrightarrow\g$ is a relative Rota-Baxter operator on $(\g,\br,,\ltp)$ with respect to the coadjoint representation $(\g^*;\ad^*,-\huaR^*\tau)$, where $[r,r]$ and $\Courant{r,r,r}$ are defined as in Eqs. \eqref{r1} and \eqref{r2} respectively.
\end{thm}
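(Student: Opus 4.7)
The strategy is to translate the relative Rota-Baxter condition into a pair of ``homomorphism'' identities by means of Proposition~\ref{pro:fund}, and then, after pairing each defect with a test covector in $\g^*$, to match the resulting scalars with the tensors $[r,r]$ and $\Courant{r,r,r}$.

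First I would unpack what it means for $r^\sharp$ to be a relative Rota-Baxter operator on $(\g,[\cdot,\cdot],\Courant{\cdot,\cdot,\cdot})$ with respect to the coadjoint representation $(\g^*;\ad^*,-\huaR^*\tau)$. By Proposition~\ref{dual} the derived operator on the dual is $D_{\ad^*,-\huaR^*\tau}=\huaL^*$, so the two defining identities read
\begin{align*}
[r^\sharp(\xi),r^\sharp(\eta)]&=r^\sharp\big(\ad^*_{r^\sharp(\xi)}\eta-\ad^*_{r^\sharp(\eta)}\xi\big),\\
\Courant{r^\sharp(\xi),r^\sharp(\eta),r^\sharp(\zeta)}&=r^\sharp\big(\huaL^*(r^\sharp(\xi),r^\sharp(\eta))\zeta-\huaR^*(r^\sharp(\zeta),r^\sharp(\eta))\xi+\huaR^*(r^\sharp(\zeta),r^\sharp(\xi))\eta\big).
\end{align*}
By Proposition~\ref{pro:fund} the arguments of $r^\sharp$ on the right-hand sides are precisely $[\xi,\eta]_*$ and $\Courant{\xi,\eta,\zeta}_*$, so the relative Rota-Baxter conditions become the compact ``homomorphism'' identities $r^\sharp([\xi,\eta]_*)=[r^\sharp(\xi),r^\sharp(\eta)]$ and $r^\sharp(\Courant{\xi,\eta,\zeta}_*)=\Courant{r^\sharp(\xi),r^\sharp(\eta),r^\sharp(\zeta)}$.

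For the binary identity I would expand directly using $r^\sharp(\alpha)=\sum_i\alpha(x_i)y_i$ and the formula \eqref{cocycle1} for $\delta$, and verify via the index swap $i\leftrightarrow j$ together with the skew-symmetry $[x_i,x_j]=-[x_j,x_i]$ that
$$\big\langle[r^\sharp(\xi),r^\sharp(\eta)]-r^\sharp([\xi,\eta]_*),\,\zeta\big\rangle=\langle[r,r],\,\xi\otimes\eta\otimes\zeta\rangle.$$
Since $\xi,\eta,\zeta$ range freely over $\g^*$, the binary Rota-Baxter identity therefore holds for all inputs if and only if $[r,r]=0$ in $\otimes^3\g$.

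The ternary case is the main technical obstacle. Expanding $\big\langle\Courant{r^\sharp(\xi),r^\sharp(\eta),r^\sharp(\zeta)}-r^\sharp(\Courant{\xi,\eta,\zeta}_*),\,\theta\big\rangle$ produces one contribution $\sum_{ijk}\xi(x_i)\eta(x_j)\zeta(x_k)\theta(\Courant{y_i,y_j,y_k})$ from the triple bracket of three copies of $r^\sharp$ and three further contributions from the three summands of $\omega$ in \eqref{cocycle2}, whereas $\Courant{r,r,r}$ from \eqref{r2} consists of four summands in which the triple bracket sits in each of the four tensor slots. The matching is not term-by-term: one must invoke the skew-symmetry of $r$, in the form $\sum_k\theta(x_k)y_k=-\sum_k\theta(y_k)x_k$, to shift $\theta$ between the $x$- and $y$-legs of $r$, combine this with the skew-symmetry of $\Courant{\cdot,\cdot,\cdot}$ in its first two arguments, and perform further relabelling of summation indices, so that each of the four defect pieces reduces to exactly one of the four contractions arising from \eqref{r2}. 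A careful bookkeeping of signs then yields
$$\big\langle\Courant{r^\sharp(\xi),r^\sharp(\eta),r^\sharp(\zeta)}-r^\sharp(\Courant{\xi,\eta,\zeta}_*),\,\theta\big\rangle=\langle\Courant{r,r,r},\,\xi\otimes\eta\otimes\zeta\otimes\theta\rangle,$$
and, as before, the freedom of the test covectors $\xi,\eta,\zeta,\theta\in\g^*$ (guaranteed by the nondegeneracy of $r$) permits one to conclude the equivalence with $\Courant{r,r,r}=0$, completing the proof.
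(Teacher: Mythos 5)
Your proposal is correct and follows essentially the same route as the paper: both arguments reduce the claim to the pairing identities $\pair{\kappa,[r^\sharp(\xi),r^\sharp(\eta)]-r^\sharp([\xi,\eta]_*)}=\pair{\xi\otimes\eta\otimes\kappa,[r,r]}$ and $\pair{\kappa,\Courant{r^\sharp(\xi),r^\sharp(\eta),r^\sharp(\zeta)}-r^\sharp(\Courant{\xi,\eta,\zeta}_*)}=\pair{\xi\otimes\eta\otimes\zeta\otimes\kappa,\Courant{r,r,r}}$, matching each of the four defect pieces with one of the four summands of \eqref{r2} after index relabelling and use of skew-symmetry. The only cosmetic difference is that you route the identification of the right-hand side through Proposition \ref{pro:fund}, whereas the paper recomputes those contractions directly; also note that the final "freedom of test covectors" step needs only finite-dimensionality of $\g$, not the nondegeneracy of $r$.
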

\begin{proof}
Let $\xi,\eta,\zeta\in \g^*$ and $r=\sum_ix_i\otimes y_i$. Then we compute that
\begin{eqnarray*}
\langle\xi\otimes\eta,r\rangle&=&\sum_i\langle\xi,x_i\rangle\langle\eta,y_i\rangle=\langle\xi,\sum_i\langle\eta,y_i\rangle x_i\rangle.
\end{eqnarray*}
The skew-symmetry of $r$ yields that
$$T(\xi)=\sum_i\langle\xi,y_i\rangle x_i.$$
Now we compute that
\begin{eqnarray*}
T\Big(\huaL^*(T(\xi),T(\eta))\zeta\Big)&=&T\Big(\huaL^*\big(\sum_i\langle\xi,y_i\rangle x_i,\sum_j\langle\eta,y_j\rangle x_j\big)\zeta\Big)\\
~ &=&\sum_{ij}\langle\xi,y_i\rangle\langle\eta,y_j\rangle T\Big(\frkL^*(x_i,x_j)\zeta\Big)\\
~ &=&\sum_{ij}\langle\xi,y_i\rangle\langle\eta,y_j\rangle \sum_k\langle\huaL^*(x_i,x_j)\zeta,y_k\rangle x_k\\
~ &=&-\sum_{ijk}\langle\xi,y_i\rangle\langle\eta,y_j\rangle \langle\zeta,\Courant{x_i,x_j,y_k}\rangle x_k\\
~ &=&-\Big(\langle\xi,\cdot\rangle\otimes\langle\eta,\cdot\rangle\otimes\langle\zeta,\cdot\rangle\otimes1\Big)\Courant{r_{31},r_{32},r_{43}},
\end{eqnarray*}
Similarly, we also have that
\begin{eqnarray*}
-T\Big(\huaR^*(T(\zeta),T(\eta))\xi\Big)&=&-\sum_{ij}\langle\zeta,y_i\rangle\langle\eta,y_j\rangle T\Big(\huaR^*(x_i,x_j)\xi\Big)\\
~ &=&\sum_{ijk}\langle\zeta,y_i\rangle\langle\eta,y_j\rangle \langle\xi,\Courant{y_k,x_i,x_j}\rangle x_k\\
~ &=&-\Big(\langle\xi,\cdot\rangle\otimes\langle\eta,\cdot\rangle\otimes\langle\zeta,\cdot\rangle\otimes1\Big)\Courant{r_{13},r_{41},r_{12}},\\
T\Big(\huaR^*(T(\zeta),T(\xi))\eta\Big)&=&\sum_{ij}\langle\zeta,y_i\rangle\langle\xi,y_j\rangle T\Big(\huaR^*(x_i,x_j)\eta\Big)\\
~ &=&\sum_{ijk}\langle\zeta,y_i\rangle\langle\xi,y_j\rangle \langle\eta,\Courant{y_k,x_i,x_j}\rangle x_k\\
~ &=&-\Big(\langle\xi,\cdot\rangle\otimes\langle\eta,\cdot\rangle\otimes\langle\zeta,\cdot\rangle\otimes1\Big)\Courant{r_{42},r_{23},r_{21}},\\
\Courant{T(\xi),T(\eta),T(\zeta)}&=&\sum_{ijk}\Courant{\langle\xi,y_i\rangle x_i,\langle\eta,y_j\rangle x_j,\langle\zeta,y_k\rangle x_k}\\
~&=&\sum_{ijk}\langle\xi,y_i\rangle \langle\eta,y_j\rangle\langle\zeta,y_k\rangle\Courant{x_i,x_j,x_k}\\
~ &=&\Big(\langle\xi,\cdot\rangle\otimes\langle\eta,\cdot\rangle\otimes\langle\zeta,\cdot\rangle\otimes1\Big)
\Courant{r_{41},r_{42},r_{43}}.
\end{eqnarray*}
Thus we obtain that
$$\pair{\kappa,\Courant{T(\xi),T(\eta),T(\zeta)}-T\Big(\huaL^*(T(\xi),T(\eta))\zeta-\huaR^*(T(\zeta),T(\eta))\xi+\huaR^*(T(\zeta),T(\xi))\eta\Big)}
=\pair{\xi\otimes\eta\otimes\zeta\otimes\kappa,\Courant{r,r,r}}.$$
Similarly, we also have the following relation
$$\pair{\kappa,[T(\xi),T(\eta)]-T\Big(\ad^*_{T(\xi)}\eta-\ad^*_{T(\eta)}\xi\Big)}=\pair{\xi\otimes\eta\otimes\kappa,[r,r]}.$$
The conclusion thus follows.
\end{proof}

This leads to the following definitions of the classical Yang-Baxter equation in \LYA s and the classical Lie-Yamaguti $r$-matrix.
\begin{defi}\label{rmatrix}
Let $(\g,\br,,\ltp)$ be a \LYA ~and $r\in \otimes^2\g$. The equation \eqref{CYBE} given in Theorem \ref{THM:CYBE} is called the {\bf classical Lie-Yamaguti Yang-Baxter equation} in $\g$ and $r$ is called the {\bf classical Lie-Yamaguti $r$-matrix} of $\g$.
\end{defi}

We obtain the following corollary as a direct consequence.
\begin{cor}
If $r\in \otimes^2\g$ is a skew-symmetric classical Lie-Yamaguti $r$-matrix, then the induced map $r^\sharp:\g^*\longrightarrow\g$ defined by \eqref{rsharp} is a Lie-Yamaguti homomorphism from $(\g^*,\br_{_*},\ltp_*)$ to $(\g,\br,,\ltp)$.
\end{cor}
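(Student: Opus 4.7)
The plan is to deduce the claim by simply combining Theorem \ref{THM:CYBE} with Proposition \ref{pro:fund}; essentially no new computation is required. First, Theorem \ref{THM:CYBE} applied to the skew-symmetric classical Lie-Yamaguti $r$-matrix $r$ asserts that $r^\sharp:\g^*\longrightarrow\g$ is a relative Rota-Baxter operator on $\g$ with respect to the coadjoint representation $(\g^*;\ad^*,-\huaR^*\tau)$. (Although Theorem \ref{THM:CYBE} is stated under a nondegeneracy hypothesis, the forward implication in its proof uses only the vanishing of $[r,r]$ and $\Courant{r,r,r}$, since the test element $\kappa\in \g^*$ there ranges freely over the entire dual, and so it applies equally well here.)

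Unfolding the defining Rota-Baxter identities with $T=r^\sharp$, $\rho=\ad^*$, $\mu=-\huaR^*\tau$, and using $D_{\ad^*,-\huaR^*\tau}=\huaL^*$ from Example \ref{ad}, I obtain
\begin{eqnarray*}
[r^\sharp(\xi),r^\sharp(\eta)]&=&r^\sharp\bigl(\ad^*_{r^\sharp(\xi)}\eta-\ad^*_{r^\sharp(\eta)}\xi\bigr),\\
\Courant{r^\sharp(\xi),r^\sharp(\eta),r^\sharp(\zeta)}&=&r^\sharp\bigl(\huaL^*(r^\sharp(\xi),r^\sharp(\eta))\zeta
-\huaR^*(r^\sharp(\zeta),r^\sharp(\eta))\xi+\huaR^*(r^\sharp(\zeta),r^\sharp(\xi))\eta\bigr),
\end{eqnarray*}
for all $\xi,\eta,\zeta\in \g^*$, where the swap from $-\huaR^*\tau(y,z)$ to $-\huaR^*(z,y)$ together with the two minus signs is exactly what produces the $+\huaR^*$ terms above.

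Next, I invoke Proposition \ref{pro:fund}, whose entire purpose is to recognize the expressions inside the parentheses on the right-hand sides above as the induced dual brackets $[\xi,\eta]_*$ and $\Courant{\xi,\eta,\zeta}_*$ respectively. Substituting these identifications collapses the two Rota-Baxter identities to
$$r^\sharp([\xi,\eta]_*)=[r^\sharp(\xi),r^\sharp(\eta)]\quad\text{and}\quad r^\sharp(\Courant{\xi,\eta,\zeta}_*)=\Courant{r^\sharp(\xi),r^\sharp(\eta),r^\sharp(\zeta)},$$
which is precisely the statement that $r^\sharp$ is a Lie-Yamaguti homomorphism from $(\g^*,\br_{_*},\ltp_*)$ to $(\g,\br,,\ltp)$.

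I do not anticipate any serious obstacle: the only careful bookkeeping is to keep the sign conventions consistent when passing between the abstract $\mu=-\huaR^*\tau$ appearing in the Rota-Baxter identity and the concrete $\huaR^*$ appearing in Proposition \ref{pro:fund}, together with the relation $D_{\ad^*,-\huaR^*\tau}=\huaL^*$. Everything else is formal.
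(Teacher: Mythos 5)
Your proposal is correct and is essentially the paper's intended argument: the corollary is presented there as a direct consequence of Theorem \ref{THM:CYBE}, with Proposition \ref{pro:fund} supplying exactly the identification of the dual brackets $[\cdot,\cdot]_*$ and $\ltp_*$ with the expressions appearing in the relative Rota-Baxter identities for $r^\sharp$. Your remark that the forward implication of Theorem \ref{THM:CYBE} does not actually use nondegeneracy (since $\kappa$ ranges over all of $\g^*$) is a worthwhile observation, as the corollary's hypothesis indeed omits nondegeneracy.
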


\begin{ex}\label{ex:2di}
Let $\g$ be a $2$-dimensional \LYA ~with a basis $\{e_1,e_2\}$ defined to be
$$[e_1,e_2]=e_1,\quad\Courant{e_1,e_2,e_2}=e_1.$$
Then any skew-symmetric $2$-tensor $r=k(e_1\otimes e_2-e_2\otimes e_1)$ is a solution to the classical Lie-Yamaguti Yang-Baxter equation.
\end{ex}

We give the following interpretation of the invertible skew-symmetric classical Lie-Yamaguti $r$-matrices, which is parallel to the result for the classical Yang-Baxter equation in a Lie algebra or a $3$-Lie algebra.

\begin{pro}\label{symplectic}
Let $(\g,\br,,\ltp)$ be a \LYA ~and $r\in \otimes^2\g$. Suppose that $r$ is skew-symmetric and nondegenerate. Then $r$ is a classical $r$-matrix of $\g$ if and only if the nondegenerate, skew-symmetric bilinear form $\omega\in \wedge^2\g^*$ defined to be
$$\omega(x,y):=\langle (r^\sharp)^{-1}(x),y\rangle,\quad \forall x,y\in \g$$
is a symplectic structure\footnote{The notion of symplectic structures was introduced in \cite{SZ1}}, i.e., $\omega$ satisfies
\begin{eqnarray*}
 \omega(x,[y,z])+\omega(y,[z,x])+\omega(z,[x,y])&=&0,\\
 ~\omega(z,\Courant{x,y,w})-\omega(x,\Courant{w,z,y})+\omega(y,\Courant{w,z,x})-\omega(w,\Courant{x,y,z})&=&0,
 \end{eqnarray*}
for all $x,y,z,w\in \g$.
\end{pro}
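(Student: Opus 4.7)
The plan is to deduce the proposition directly from Theorem \ref{THM:CYBE}, using the invertibility of $r^{\sharp}$ to transport the relative Rota-Baxter identities for $T:=r^{\sharp}:\g^{*}\to\g$ into the two symplectic-type identities for $\omega$. Concretely, by Theorem \ref{THM:CYBE}, a skew-symmetric $r$ satisfies \eqref{CYBE} if and only if $T$ is a relative Rota-Baxter operator with respect to the coadjoint representation $(\g^{*};\ad^{*},-\huaR^{*}\tau)$. Thus I only need to show that, under the change of variables $x=T\xi$, $y=T\eta$, $z=T\zeta$, $w=T\kappa$ (permissible since $T$ is a bijection), the relative Rota-Baxter identities translate precisely into the stated symplectic identities.

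As a preliminary step, I would verify that $\omega(x,y):=\langle (r^{\sharp})^{-1}(x),y\rangle$ is skew-symmetric. This follows from the skew-symmetry of $r$: the relation $\langle T\xi,\eta\rangle=-\langle T\eta,\xi\rangle$ (a consequence of $r=-\tau(r)$) gives $\omega(x,y)+\omega(y,x)=0$ after setting $\xi=T^{-1}x$, $\eta=T^{-1}y$. Nondegeneracy of $\omega$ is immediate from that of $r$.

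Next, I would unpack the Lie-bracket part of the relative Rota-Baxter condition. Applying $T^{-1}$ to $[T\xi,T\eta]=T(\ad^{*}_{T\xi}\eta-\ad^{*}_{T\eta}\xi)$ and pairing with $z$ yields
\begin{equation*}
\omega([x,y],z)=-\langle T^{-1}y,[x,z]\rangle+\langle T^{-1}x,[y,z]\rangle
=\omega(x,[y,z])-\omega(y,[x,z]).
\end{equation*}
Using skew-symmetry of $\omega$ and antisymmetry of $[\cdot,\cdot]$, this rearranges to the cyclic identity $\omega(x,[y,z])+\omega(y,[z,x])+\omega(z,[x,y])=0$, and the derivation is reversible.

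For the ternary part I would do the analogous calculation. Using the representation formulas $\langle \huaL^{*}(x,y)\alpha,w\rangle=-\langle\alpha,\Courant{x,y,w}\rangle$ and $\langle -\huaR^{*}(z,y)\alpha,w\rangle=\langle\alpha,\Courant{w,z,y}\rangle$, applying $T^{-1}$ to
\begin{equation*}
\Courant{T\xi,T\eta,T\zeta}=T\bigl(\huaL^{*}(T\xi,T\eta)\zeta-\huaR^{*}(T\zeta,T\eta)\xi+\huaR^{*}(T\zeta,T\xi)\eta\bigr),
\end{equation*}
and pairing with $w$ produces
\begin{equation*}
\omega(\Courant{x,y,z},w)=-\omega(z,\Courant{x,y,w})+\omega(x,\Courant{w,z,y})-\omega(y,\Courant{w,z,x}),
\end{equation*}
which, after moving $\omega(\Courant{x,y,z},w)=-\omega(w,\Courant{x,y,z})$ to the other side, is exactly the second symplectic identity in the statement; again, every step is reversible because $T$ is bijective. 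The main obstacle is purely bookkeeping: one must keep track of the twist $\tau$ in $\mu=-\huaR^{*}\tau$ and of the various sign conventions in the dual pairings, so that the six terms land in the correct positions. Once this is handled, combining the two equivalences with Theorem \ref{THM:CYBE} completes the proof.
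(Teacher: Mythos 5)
Your proposal is correct and follows essentially the same route as the paper: both reduce the statement to Theorem \ref{THM:CYBE} via the bijection $T=r^\sharp$, substitute $x=T\xi$, etc., and unpack the relative Rota-Baxter identities through the dual pairings $\langle\huaL^*(x,y)\zeta,w\rangle=-\langle\zeta,\Courant{x,y,w}\rangle$ and $\langle\huaR^*(z,y)\xi,w\rangle=-\langle\xi,\Courant{w,z,y}\rangle$ to obtain the two symplectic identities. Your sign bookkeeping in both the binary and ternary computations checks out against the paper's.
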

\begin{proof}
Since $r\in \otimes^2\g$ is nondegenerate, for all $\xi,\eta,\zeta\in \g^*$, there exists $x,y,z\in \g$, such that $r^\sharp(\xi)=x,r^\sharp(\eta)=y,r^\sharp(\zeta)=z$. Then it follows from Theorem \ref{THM:CYBE} that
\begin{eqnarray*}
\omega(w,\Courant{x,y,z})&=&-\langle(r^\sharp)^{-1}(\llbracket r^\sharp(\xi),r^\sharp(\eta),r^\sharp(\zeta)\rrbracket,w)\rangle\\
~ &=&-\langle\huaL^*(r^\sharp(\xi),r^\sharp(\eta))\zeta-\huaR^*(r^\sharp(\zeta),r^\sharp(\eta))\xi+\huaR^*(r^\sharp(\zeta),r^\sharp(\xi))\eta,w\rangle\\
~ &=&\langle\zeta,\llbracket r^\sharp(\xi),r^\sharp(\eta),w\rrbracket\rangle-\langle\xi,\llbracket w,r^\sharp(\zeta),r^\sharp(\eta)\rrbracket\rangle
+\langle\eta,\llbracket w,r^\sharp(\zeta),r^\sharp(\xi)\rrbracket\rangle\\
~ &=&\omega(z,\Courant{x,y,w})-\omega(x,\Courant{w,z,y})+\omega(y,\Courant{w,z,x})
\end{eqnarray*}
and
\begin{eqnarray*}
\omega(z,[x,y])&=&-\langle(r^\sharp)^{-1}([r^\sharp(\xi),r^\sharp(\eta)]),z\rangle\\
~ &=&-\langle\ad^*_{r^\sharp(\xi)}\eta-\ad^*_{r^\sharp(\eta)}\xi,z\rangle\\
~ &=&\langle\eta,[r^\sharp(\xi),z]\rangle-\langle\xi,[r^\sharp(\eta),z]\rangle\\
~ &=&\omega(y,[x,z])-\omega(x,[y,z]).
\end{eqnarray*}
This finishes the proof.
\end{proof}

Given a $2$-tensor $\overline T\in V^*\otimes\g$, there induces a linear map $T:V\longrightarrow\g$ defined to be
$$\overline T(v,\xi):=\langle\xi,Tv\rangle,\quad\xi\in \g^*,v\in V.$$

The following result demonstrates that a relative Rota-Baxter operator gives rise to a solution to the classical Lie-Yamaguti Yang-Baxter equation in a lager \LYA, which is parallel to the context of Lie algebras or $3$-Lie algebras.
\begin{thm}\label{thm:O}
Let $(\g,\br,,\ltp)$ be a \LYA ~and $(V;\rho,\mu)$ its representation. Then with the above notations, $T:V\longrightarrow\g$ is a relative Rota-Baxter operator on $(\g,\br,,\ltp)$ with respect to $(V;\rho,\mu)$ if and only if $$r=\overline{T}-\sigma_{12}(\overline{T})$$
is an $r$-matrix of the semidirect product \LYA ~$\g\ltimes_{\rho^*,-\mu^*\tau}V^*$.
\end{thm}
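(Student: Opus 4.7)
The plan is to invoke Theorem~\ref{THM:CYBE} on the semidirect product $\widetilde{\g}:=\g\ltimes_{\rho^*,-\mu^*\tau}V^*$ and to reduce the condition that $r=\overline{T}-\sigma_{12}(\overline{T})$ be an $r$-matrix on $\widetilde{\g}$ to the relative Rota-Baxter condition for $T$ itself. Writing $\overline{T}=\sum_i\xi_i\otimes x_i$ with $\xi_i\in V^*$, $x_i\in\g$ (so that $Tv=\sum_i\langle\xi_i,v\rangle x_i$), the tensor $r=\sum_i(\xi_i\otimes x_i-x_i\otimes\xi_i)\in\otimes^2\widetilde{\g}$ is skew-symmetric by construction. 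A direct pairing computation from \eqref{rsharp}, using the natural identification $\widetilde{\g}^*=\g^*\oplus V$, yields
$$r^\sharp(\eta+u)\;=\;Tu-T^*\eta\;\in\;\g\oplus V^*=\widetilde{\g},\qquad \eta\in\g^*,\ u\in V,$$
where $T^*:\g^*\to V^*$ is the transpose $\langle T^*\eta,v\rangle=\langle\eta,Tv\rangle$. In particular $r^\sharp|_V=T$ and $r^\sharp|_{\g^*}=-T^*$.

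By Theorem~\ref{THM:CYBE} applied to $\widetilde{\g}$ (whose proof, on inspection, uses only skew-symmetry of $r$ for the direction we need, so the nondegeneracy hypothesis is inessential here), $r$ is an $r$-matrix of $\widetilde{\g}$ iff $r^\sharp$ is a relative Rota-Baxter operator on $\widetilde{\g}$ with respect to its coadjoint representation $(\widetilde{\g}^*;\widetilde{\ad}^*,-\widetilde{\huaR}^*\tau)$. I would next compute this coadjoint action explicitly by dualizing the semidirect brackets: the $\g$-part of $\widetilde{\g}$ acts on $\widetilde{\g}^*=\g^*\oplus V$ via $\ad^*$ on $\g^*$ and via $\rho$ on $V$, with analogous appearances of $\huaL^*$ and $\mu$ in the trilinear action; the $V^*$-part contributes only cross-terms landing in $\g^*$, all of which are computable via Proposition~\ref{dual}. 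Then, inserting arguments of the form $(0,u_i)\in\widetilde{\g}^*$ into the binary and ternary relative Rota-Baxter equations for $r^\sharp$ and projecting onto the $\g$-summand of $\widetilde{\g}$, the equations collapse to
\begin{eqnarray*}
[Tu_1,Tu_2]&=&T\bigl(\rho(Tu_1)u_2-\rho(Tu_2)u_1\bigr),\\
\Courant{Tu_1,Tu_2,Tu_3}&=&T\bigl(D(Tu_1,Tu_2)u_3+\mu(Tu_2,Tu_3)u_1-\mu(Tu_1,Tu_3)u_2\bigr),
\end{eqnarray*}
which are precisely the defining identities for $T$ to be a relative Rota-Baxter operator on $\g$ with respect to $(V;\rho,\mu)$. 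This settles the ``only if'' direction at once.

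The main obstacle is the converse: one must verify that the $V^*$-projections of the Rota-Baxter equations, and the cases with some $\eta_i\neq 0$, are automatic consequences of the two displayed identities. The structural reason is that $r^\sharp$ is skew-adjoint in the sense that its block $V\to\g$ is the negative transpose of its block $\g^*\to V^*$; consequently, pairing the $V^*$-component of the binary Rota-Baxter equation with an arbitrary $v\in V$ and rewriting through $\langle T^*\cdot,v\rangle=\langle\cdot,Tv\rangle$ collapses it, once the binary identity for $T$ is invoked, to a tautology in which the cross-terms involving the dual action cancel. The ternary case proceeds by the same mechanism but requires substantially more bookkeeping for the $\mu^*$- and $\huaL^*$-cross terms produced by the coadjoint action of $V^*\subset\widetilde{\g}$, and constitutes the real computational content of the proof.
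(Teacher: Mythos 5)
Your route is genuinely different from the paper's, and its sound parts are worth noting: the identification $r^\sharp(\eta+u)=Tu-T^*\eta$ is correct, the observation that the proof of Theorem \ref{THM:CYBE} never uses nondegeneracy (only the skew-symmetry of $r$) is correct and necessary here since $r=\overline{T}-\sigma_{12}(\overline{T})$ is degenerate in general, and the ``$r$-matrix $\Rightarrow$ relative Rota--Baxter'' direction obtained by evaluating the Rota--Baxter identities for $r^\sharp$ at arguments $u_1,u_2,u_3\in V\subset\g^*\oplus V$ does collapse exactly to the two defining identities for $T$, since $\widetilde{\ad}^*_{Tu_1}u_2=\rho(Tu_1)u_2$ and the coadjoint $\widetilde{\huaL}^*,\widetilde{\huaR}^*$ restricted to $V$ reproduce $D$ and $\mu$.

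The gap is the converse, and you have named it yourself without closing it. What remains to be shown is that the two identities for $T$ force the binary and ternary Rota--Baxter identities for $r^\sharp$ at \emph{arbitrary} mixed arguments $\eta_i+u_i$, including the $V^*$-components and all cross-terms produced by the coadjoint action of $V^*\subset\g\ltimes_{\rho^*,-\mu^*\tau}V^*$; this is not a formality but the entire computational content of the theorem, and ``the cross-terms cancel by skew-adjointness of $r^\sharp$'' is a heuristic, not a verification (the ternary case involves $\widetilde{\huaL}^*$- and $\widetilde{\huaR}^*$-cross terms whose cancellation uses the full list of representation axioms and careful signs). The paper avoids this case analysis altogether: it fixes a basis $\{v_i\}$ of $V$, expands $[r,r]$ and $\Courant{r,r,r}$ directly in the semidirect product, and exhibits them as explicit tensors whose components are the defects $\huaO_2(v_i,v_j)=[Tv_i,Tv_j]-T(\rho(Tv_i)v_j-\rho(Tv_j)v_i)$ and $\huaO_1(v_i,v_j,v_k)$, so that both implications follow at once from $[r,r]=0$, $\Courant{r,r,r}=0$ iff $\huaO_1=\huaO_2=0$. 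To make your argument complete you would either have to carry out the mixed-argument verification in full (comparable in length to the paper's computation) or prove, as a standalone lemma, that $T\mapsto T-T^*$ sends relative Rota--Baxter operators with respect to $(V;\rho,\mu)$ to relative Rota--Baxter operators on the semidirect product with respect to its coadjoint representation.
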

\begin{proof}
Let $\{v_1,\cdots,v_n\}$ be a basis for the vector space $V$ and $\{v_1^*,\cdots,v_n^*\}$ its dual basis. Then we have
$$\overline T=\sum_iv_i^*\otimes Tv_i\in V^*\otimes \g\subset \otimes^2\Big(\g\ltimes_{\rho^*,-\mu^*\tau}V^*\Big).$$
By a direct computation, we have
\begin{eqnarray*}
\Courant{r_{13},r_{41},r_{12}}&=&\sum_{ijk}\Big(\Courant{Tv_i,Tv_k,Tv_j}\otimes v_i^*\otimes v_j^*\otimes v_k^*-\Courant{Tv_i,Tv_k,v_j^*}\otimes v_i^*\otimes Tv_j\otimes v_k^*\\
~ &&+\Courant{v_i^*,Tv_k,Tv_j}\otimes Tv_i\otimes v_j^*\otimes v_k^*+\Courant{Tv_i,v_k^*,Tv_j}\otimes v_i^*\otimes v_j^*\otimes Tv_k\Big),\\
\Courant{r_{42},r_{23},r_{21}}&=&\sum_{ijk}\Big(v_j^*\otimes\Courant{Tv_k,Tv_i,Tv_j}\otimes v_i^*\otimes v_k^*-Tv_j\otimes\Courant{Tv_k,Tv_i,v_j^*}\otimes v_i^*\otimes v_k^*\\
~ &&-v_j^*\otimes\Courant{v_k^*,Tv_i,Tv_j}\otimes v_i^*\otimes Tv_k-v_j^*\otimes\Courant{Tv_k,v_i^*,Tv_j}\otimes Tv_i\otimes v_k^*\Big),\\
\Courant{r_{31},r_{32},r_{43}}&=&\sum_{ijk}\Big(v_i^*\otimes v_j^*\otimes \Courant{Tv_i,Tv_j,Tv_k}\otimes v_k^*-v_i^*\otimes v_j^*\otimes \Courant{Tv_i,Tv_j,v_k^*}\otimes Tv_k\\
~ &&-Tv_i\otimes v_j^*\otimes \Courant{v_i^*,Tv_j,Tv_k}\otimes v_k^*-v_i^*\otimes Tv_j\otimes \Courant{Tv_i,v_j^*,Tv_k}\otimes v_k^*\Big),\\
\Courant{r_{41},r_{42},r_{43}}&=&\sum_{ijk}\Big(-v_i^*\otimes v_j^*\otimes v_k^*\otimes\Courant{Tv_i,Tv_j,Tv_k}+v_i^*\otimes v_j^*\otimes Tv_k\otimes \Courant{Tv_i,Tv_j,v_k^*}\\
~ &&-Tv_i\otimes v_j^*\otimes v_k^*\otimes \Courant{v_i^*,Tv_j,Tv_k}+v_i^*\otimes Tv_j\otimes v_k^*\otimes\Courant{Tv_i,v_j^*,Tv_k}\Big).
\end{eqnarray*}
Moreover, we also have that
\begin{eqnarray*}
\sum_i Tv_i\otimes \Courant{Tv_i,Tv_j,v_k^*}&=&\sum_iTv_i\otimes D^*(Tv_i,Tv_j)v_k^*=\sum_iTv_i\otimes \sum_m\langle D^*(Tv_i,Tv_j)v_k^*,v_m\rangle v_m^*\\
~ &=&\sum_{im}Tv_i\otimes \Big(-\langle D(Tv_i,Tv_j)v_m,v_k^*\rangle v_m^*\Big)=-\sum_mT\Big(D(Tv_i,Tv_j)v_m\Big)\otimes v_m^*,
\end{eqnarray*}
and
\begin{eqnarray*}
\sum_i Tv_i\otimes \Courant{v_i^*,Tv_j,Tv_k}&=&\sum_iTv_i\otimes (-\mu^*(Tv_k,Tv_j)v_i^*)=\sum_iTv_i\otimes \sum_m(-\langle \mu^*(Tv_k,Tv_j)v_i^*,v_m\rangle v_m^*)\\
~ &=&\sum_{im}Tv_i\otimes \Big(\langle \mu(Tv_k,Tv_j)v_m,v_i^*\rangle v_m^*\Big)=\sum_mT\Big(\mu(Tv_k,Tv_j)v_m\Big)\otimes v_m^*.
\end{eqnarray*}
Denote by
$$\huaO_1(u,v,w)=\Courant{Tu,Tv,Tw}-T\Big(D(Tu,Tv)w+\mu(Tv,Tw)u-\mu(Tu,Tw)v\Big),\quad \forall u,v,w\in V.$$
Therefore, we have
\begin{eqnarray*}
\llbracket r,r,r\rrbracket&=&\Courant{r_{13},r_{41},r_{12}}+\Courant{r_{42},r_{23},r_{21}}+\Courant{r_{31},r_{32},r_{43}}+\Courant{r_{41},r_{42},r_{43}}\\
~ &=&\sum_{ijk}\Big(\huaO_1(v_i,v_j,v_k)\otimes v_i^*\otimes v_j^*\otimes v_k^*+v_j^*\otimes\huaO_1(v_k,v_i,v_j)\otimes v_i^*\otimes v_k^*\\
~ &&+v_i^*\otimes v_j^*\otimes \huaO_1(v_i,v_j,v_k)\otimes v_k^*-v_i^*\otimes v_j^*\otimes v_k^*\otimes\huaO_1(v_i,v_j,v_k)\Big).
\end{eqnarray*}
Moreover, we also have that
\begin{eqnarray*}
[r,r]&=&[r_{12},r_{13}]+[r_{12},r_{23}]+[r_{13},r_{23}]\\
~ &=&\sum_{ij}\Big(\huaO_2(v_i,v_j)\otimes v_i^*\otimes v_j^*-v_i^*\otimes\huaO_2(v_i,v_j)\otimes v_k^*+v_i^*\otimes v_j^*\otimes \huaO_2(v_i,v_j)\Big),
\end{eqnarray*}
where
$$\huaO_2(u,v):=[Tu,Tv]-T\Big(\rho(Tu)v-\rho(Tv)u\Big),\quad \forall u,v\in V.$$
Hence, $r$ is an $r$-matrix, i.e.,
$$[r,r]=0,\quad{\rm and}\quad \Courant{r,r,r}=0$$
if and only if
$$\huaO_1(v_i,v_j,v_k)=0,\quad{\rm and}\quad \huaO_2(v_i,v_j)=0,$$
for all $i,j,k$, which implies that $T$ is a relative Rota-Baxter operator. This finishes the proof.
\end{proof}

\begin{pro}
Let $(A,*,\{\cdot,\cdot,\cdot\})$ be a pre-\LYA ~and $\{e_i\}_{i=1}^n$ a basis for $A$ and $\{e_i^*\}_{i=1}^n$ its dual basis. Then
$$r:=\sum_{i=1}^n(e_i\otimes e_i^*-e_i^*\otimes e_i)$$
is a skew-symmetric $r$-matrix for the \LYA ~$A\ltimes_{\Ad^*,-R^*\tau}A^*$. Moreover, $r$ is nondegenerate and the induced bilinear form $\huaB$ on $A\ltimes_{\Ad^*,-R^*\tau}A^*$ is given by \eqref{bilinear}.
\end{pro}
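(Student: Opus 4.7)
The strategy is to derive this proposition directly from Proposition \ref{subad}(ii) combined with Theorem \ref{thm:O}. Proposition \ref{subad}(ii) provides the crucial input: the identity map $\Id: A \to A$ is already a relative Rota-Baxter operator on the sub-adjacent \LYA $A^c$ with respect to its canonical representation $(A; \Ad, R)$. Theorem \ref{thm:O} then converts this operator into a skew-symmetric $r$-matrix on the semidirect product \LYA $A^c \ltimes_{\Ad^*, -R^*\tau} A^*$ via the formula $\overline{T} - \sigma_{12}(\overline{T})$, and it only remains to match the formulas and check nondegeneracy.

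Concretely, I would compute $\overline{T}$ from its defining relation $\overline{T}(v, \xi) = \langle \xi, Tv \rangle$: for $T = \Id$ this forces $\overline{T} = \sum_i e_i^* \otimes e_i \in A^* \otimes A$. Hence Theorem \ref{thm:O} yields
\[
\overline{T} - \sigma_{12}(\overline{T}) \;=\; \sum_{i=1}^n \bigl(e_i^* \otimes e_i - e_i \otimes e_i^*\bigr) \;=\; -r
\]
as an $r$-matrix of $A^c \ltimes_{\Ad^*, -R^*\tau} A^*$. To pass from $-r$ back to the $r$ stated in the proposition, observe that in Eqs. \eqref{r1} and \eqref{r2} the expressions $[r,r]$ and $\Courant{r,r,r}$ are homogeneous of degrees two and three in $r$, so $[-r,-r] = [r,r]$ and $\Courant{-r,-r,-r} = -\Courant{r,r,r}$; thus $r$ solves the classical Lie-Yamaguti Yang-Baxter equation \eqref{CYBE} if and only if $-r$ does.

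For nondegeneracy, I would identify $(A \oplus A^*)^* \cong A^* \oplus A$ via the canonical pairing $\langle (a,\gamma),(\psi,v)\rangle = \psi(a) + \gamma(v)$ and unfold \eqref{rsharp} directly. A short computation gives $r^\sharp(\psi, v) = (-v, \psi)$, which is manifestly a linear isomorphism with inverse $(r^\sharp)^{-1}(a, \gamma) = (\gamma, -a)$. This simultaneously shows $r$ is nondegenerate and lets one read off the induced bilinear form: substituting the explicit inverse into $\huaB(x,y) := \langle (r^\sharp)^{-1}(x), y\rangle$ in the spirit of Proposition \ref{symplectic} produces exactly the formula \eqref{bilinear}.

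The work above is essentially bookkeeping: no new identity needs to be proved beyond what Proposition \ref{subad} and Theorem \ref{thm:O} already supply. The only genuine pitfall — and the step I would be most careful with — is maintaining a consistent sign convention when unraveling the nested identifications $A \hookrightarrow A \oplus A^*$, $A^* \hookrightarrow A \oplus A^*$, and $(A \oplus A^*)^* \cong A^* \oplus A$, so that the computations of $\overline{T}$, $r^\sharp$, and $\huaB$ all align with the sign in the statement and with the formula \eqref{bilinear}.
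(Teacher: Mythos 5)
Your route is the paper's route: invoke Proposition \ref{subad} to get that $\Id$ is a relative Rota--Baxter operator for $(A;\Ad,R)$ on $A^c$, then feed it into Theorem \ref{thm:O}. Your extra care with the sign is in fact a genuine improvement on the paper's own (very terse) proof: Theorem \ref{thm:O} literally produces $\overline{T}-\sigma_{12}(\overline{T})=\sum_i(e_i^*\otimes e_i-e_i\otimes e_i^*)=-r$, and your observation that $[-r,-r]=[r,r]$ and $\Courant{-r,-r,-r}=-\Courant{r,r,r}$ correctly closes that gap, which the paper silently elides. The computation $r^\sharp(\psi,v)=(-v,\psi)$ is also correct and does establish nondegeneracy.

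The one step that does not go through as written is the last one. With $(r^\sharp)^{-1}(a+\gamma)=(\gamma,-a)$ you get
\begin{equation*}
\langle (r^\sharp)^{-1}(a+\gamma),\,b+\delta\rangle=\langle\gamma,b\rangle-\langle a,\delta\rangle,
\end{equation*}
which is skew-symmetric, as it must be since $r$ is skew-symmetric; it cannot coincide with the \emph{symmetric} form \eqref{bilinear}, which is $\langle a,\delta\rangle+\langle\gamma,b\rangle$. So ``substituting the inverse into $\huaB(x,y):=\langle (r^\sharp)^{-1}(x),y\rangle$'' in the spirit of Proposition \ref{symplectic} yields the symplectic form associated with $r$, not the quadratic form $\huaB$ of \eqref{bilinear}. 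The correct reading of the last assertion (which the paper itself only asserts as ``obvious'') is that the nondegenerate \emph{symmetric} pairing naturally carried by $A\oplus A^*$ --- the one whose components agree with those of $r$ up to symmetrization, i.e.\ $\sum_i(e_i\otimes e_i^*+e_i^*\otimes e_i)$ in place of $\sum_i(e_i\otimes e_i^*-e_i^*\otimes e_i)$ --- is exactly \eqref{bilinear}; that is a direct evaluation on basis elements and is independent of Proposition \ref{symplectic}. Apart from rephrasing that final sentence, your argument is sound and matches the paper's.
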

\begin{proof}
By Proposition \ref{subad}, we have that the identity map $\Id:A\longrightarrow A$ is a relative Rota-Baxter operator on the sub-adjacent \LYA ~$A^c$ of the given pre-\LYA ~$(A,*,\{\cdot,\cdot,\cdot\})$ with respect to the representation $(A;\Ad,R)$.  Moreover, it follows from Theorem \ref{thm:O} that $r=\sum_{i=1}^n(e_i\otimes e_i^*-e_i^*\otimes e_i)$ is a skew-symmetric solution to the classical Lie-Yamaguti Yang-Baxter equation in $A\ltimes_{\Ad^*,-R^*\tau}A^*$. It is obvious that the corresponding bilinear form $\huaB\in \otimes^2(A\oplus A^*)$ is given by \eqref{bilinear}. The proof is finished.
\end{proof}

In order to generalize a result given by Semonov-Tian-Shansky in \cite{STS} to the context of \LYA s, we need to recall the notion of quadratic \LYA s and prove a lemma first.

\begin{defi}{\rm(\cite{Kikkawa})}
 A {\bf quadratic Lie-Yamaguti algebra} is a Lie-Yamaguti algebra $(\g,[\cdot,\cdot],\Courant{\cdot,\cdot,\cdot})$ equipped with a nondegenerate symmetric bilinear form $\huaB \in \otimes^2\g^*$ satisfying the following invariant conditions
\begin{eqnarray}
\label{invr1}\huaB([x,y],z)&=&-\huaB(y,[x,z]),\\
\label{invr2}\huaB(\Courant{x,y,z},w)&=&\huaB(x,\Courant{w,z,y}), \quad \forall x,y,z \in \g.
\end{eqnarray}
We denote a quadratic \LYA ~by $\Big((\g,\br,,\ltp),\huaB\Big)$.
\end{defi}

\begin{lem}\label{lem:qua}
Let $\Big((\g,\br,,\ltp),\huaB\Big)$ be a quadratic \LYA. Then the induced map $\huaB^\natural:\g\longrightarrow\g^*$ defined by \eqref{Bna} is an isomorphism from the adjoint representation $(\g;\ad,\huaR)$ to the coadjoint representation $(\g^*;\ad^*,-\huaR^*\tau)$.
\end{lem}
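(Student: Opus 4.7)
The plan is to verify the two properties that a morphism of representations must satisfy: that $\huaB^\natural$ is a linear isomorphism, and that it intertwines the $\rho$-parts and $\mu$-parts of the two representations.

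First, nondegeneracy of $\huaB$ immediately gives that $\huaB^\natural$ is a linear isomorphism (here we use finite-dimensionality, which is assumed throughout). So the content lies in checking the intertwining relations
\begin{eqnarray*}
\huaB^\natural\circ\ad_x &=& \ad^*_x\circ\huaB^\natural,\\
\huaB^\natural\circ\huaR(x,y) &=& -\huaR^*(y,x)\circ\huaB^\natural, \qquad\forall x,y\in\g.
\end{eqnarray*}
Both identities are direct consequences of the invariance conditions \eqref{invr1} and \eqref{invr2}, respectively, together with the definitions of $\ad^*$ and $\huaR^*$ coming from Proposition \ref{dual}.

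For the first identity, I would pair both sides against an arbitrary $z\in\g$ and compute: the left-hand side gives $\huaB([x,y],z)$, while the right-hand side gives $-\langle\huaB^\natural(y),[x,z]\rangle=-\huaB(y,[x,z])$, so the equality reduces exactly to \eqref{invr1}. For the second identity, pairing against $w\in\g$ gives on the left $\huaB(\Courant{z,x,y},w)$, which by \eqref{invr2} equals $\huaB(z,\Courant{w,y,x})$, while the right-hand side is $\langle\huaB^\natural(z),\huaR(y,x)w\rangle=\huaB(z,\Courant{w,y,x})$; hence equality. Note that it is precisely the swap $\tau$ built into $-\huaR^*\tau$ that makes the indices come out correctly with \eqref{invr2}.

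Since a morphism of representations of a Lie-Yamaguti algebra is determined by compatibility with $\rho$ and $\mu$ (the compatibility with $D$ then follows automatically from \eqref{rep}), the two identities above together with the invertibility of $\huaB^\natural$ complete the proof. The main (minor) obstacle is simply keeping track of the sign conventions and the argument-swap in $-\huaR^*\tau$; no substantive difficulty is expected.
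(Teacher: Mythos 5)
Your proof is correct and follows essentially the same route as the paper: both verify the two intertwining identities by pairing against an arbitrary element of $\g$ and reducing them to the invariance conditions \eqref{invr1} and \eqref{invr2}, with the swap in $-\huaR^*\tau$ absorbing the index reversal in \eqref{invr2}. Your explicit remarks on bijectivity from nondegeneracy and on the $D$-compatibility following from \eqref{rep} are harmless additions the paper leaves implicit.
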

\begin{proof}
For all $x,y,z,w\in \g$, we have
\begin{eqnarray*}
\langle\huaB^\natural(\ad_xy)-\ad^*_x\huaB^\natural(y),z\rangle&=&\huaB([x,y],z)+\langle\huaB^\natural(y),[x,z]\rangle\\
~ &=&\huaB([x,y],z)+\huaB(y,[x,z])\\
~ &=&0.
\end{eqnarray*}
Since $z$ is arbitrary, we deduce that
\begin{eqnarray*}
\huaB^\natural(\ad_xy)=\ad^*_x\huaB^\natural(y), \quad \forall x,y\in \g.
\end{eqnarray*}
Similarly, we also have that
\begin{eqnarray*}
\langle\huaB^\natural(\huaR(x,y)z)+\huaR^*(y,x)\huaB^\natural(z),w\rangle&=&\huaB(\Courant{z,x,y},w)+\langle\huaR^*(y,x)\huaB^\natural(z),w\rangle\\
~ &=&\huaB(\Courant{z,x,y},w)-\huaB(z,\Courant{w,y,x})\\
~ &=&0.
\end{eqnarray*}
Since $w$ is arbitrary, we deduce that
\begin{eqnarray*}
\huaB^\natural(\huaR(x,y)z)=-\huaR^*(y,x)\huaB^\natural(z), \quad \forall x,y,z\in \g.
\end{eqnarray*}
Hence, $\huaB^\natural$ is an isomorphism between adjoint representation and coadjoint representation. This completes the proof.
\end{proof}

\begin{cor}
Let $\Big((\g,\br,,\ltp),\huaB\Big)$ be a quadratic \LYA. Then $\huaB^\natural:\g\longrightarrow\g^*$ satisfies
\begin{eqnarray*}
\huaB^\natural(\huaL(x,y)z)=\huaL^*(x,y)\huaB^\natural(z),\quad\forall x,y,z\in \g.
\end{eqnarray*}
\end{cor}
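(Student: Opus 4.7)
The plan is to prove the corollary by pairing both sides with an arbitrary element $w \in \g$ and reducing the claim to a direct manipulation of the invariance identity \eqref{invr2} together with the skew-symmetry of $\Courant{\cdot,\cdot,\cdot}$ in its first two arguments. By the definition of $\huaB^\natural$ and by the convention $\pair{\huaL^*(x,y)\alpha,z}=-\pair{\alpha,\Courant{x,y,z}}$ recalled earlier, the identity $\huaB^\natural(\huaL(x,y)z)=\huaL^*(x,y)\huaB^\natural(z)$ is equivalent to
\[
\huaB(\Courant{x,y,z},w) \;=\; -\huaB(z,\Courant{x,y,w}),\qquad \forall x,y,z,w\in\g,
\]
so the whole task reduces to verifying this single scalar identity.

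To establish it, I would proceed in three purely mechanical steps. First, apply \eqref{invr2} to the left-hand side to get $\huaB(\Courant{x,y,z},w)=\huaB(x,\Courant{w,z,y})$. Second, use the fact that $\Courant{\cdot,\cdot,\cdot}$ is defined on $\wedge^2\g\otimes\g$ and hence is skew-symmetric in its first two arguments to rewrite $\Courant{w,z,y}=-\Courant{z,w,y}$, yielding $\huaB(x,\Courant{w,z,y})=-\huaB(x,\Courant{z,w,y})$. Third, combine the symmetry of $\huaB$ with a second application of \eqref{invr2}, namely $\huaB(x,\Courant{z,w,y})=\huaB(\Courant{z,w,y},x)=\huaB(z,\Courant{x,y,w})$, to conclude. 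Putting the three steps together gives the desired equality.

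An equivalent structural route, which I would mention as an alternative, is to observe that Proposition \ref{dual} identifies $\huaL^*$ with $D_{\ad^*,-\huaR^*\tau}$ while $\huaL=D_{\ad,\huaR}$, and to use formula \eqref{rep} to express both $D$-operators in terms of the underlying $\rho$ and $\mu$. Then Lemma \ref{lem:qua}, which says that $\huaB^\natural$ intertwines $\ad$ with $\ad^*$ and $\huaR$ with $-\huaR^*\tau$, automatically intertwines each term of the expansion of $D$, from which the corollary follows termwise.

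There is essentially no hard step here: the only thing to watch is the bookkeeping of signs and of the cyclic position of arguments inside the ternary bracket (in particular, the interplay between $\tau$, the skew-symmetry on the first two slots of $\Courant{\cdot,\cdot,\cdot}$, and the symmetry of $\huaB$). Provided one is careful with these conventions, the proof is a short three-line computation and no further ingredients are needed.
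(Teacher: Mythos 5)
Your proof is correct: the reduction to the scalar identity $\huaB(\Courant{x,y,z},w)=-\huaB(z,\Courant{x,y,w})$ is exactly right (using $\pair{\huaL^*(x,y)\alpha,w}=-\pair{\alpha,\Courant{x,y,w}}$ and nondegeneracy of $\huaB$), and the three-step chain $\huaB(\Courant{x,y,z},w)=\huaB(x,\Courant{w,z,y})=-\huaB(x,\Courant{z,w,y})=-\huaB(z,\Courant{x,y,w})$ checks out against \eqref{invr2} and the skew-symmetry of the first two slots. This is essentially the argument the paper intends when it says the proof is ``a direct computation similar to that of Lemma \ref{lem:qua}''; your alternative termwise route through $\huaL=D_{\ad,\huaR}$ and Lemma \ref{lem:qua} is equally valid and needs no further justification.
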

\begin{proof}
The proof is a direct computation and is similar to that of Lemma \ref{lem:qua}.
\end{proof}

It is in a position to generalize the result given by Semonov-Tian-Shansky to the context of \LYA s.
\begin{thm}\label{sts}
Let $\Big((\g,\br,,\ltp),\huaB\Big)$ be a quadratic \LYA ~and $T:\g^*\longrightarrow\g$ a linear map. Then $T$ is a relative Rota-Baxter operator on $(\g,\br,,\ltp)$ with respect to the coadjoint representation $(\g^*;\ad^*,-\huaR^*\tau)$ if and only if $T\circ\huaB^\natural$ is a relative Rota-Baxter operator on $(\g,\br,,\ltp)$ with respect to the adjoint representation $(\g;\ad,\huaR)$.
\end{thm}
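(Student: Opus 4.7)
The plan is to exploit the fact that, by Lemma \ref{lem:qua} together with its corollary, $\huaB^\natural:\g\to\g^*$ is an isomorphism of representations from $(\g;\ad,\huaR)$ to $(\g^*;\ad^*,-\huaR^*\tau)$, intertwining the $D$-operators via $\huaB^\natural\circ\huaL(a,b)=\huaL^*(a,b)\circ\huaB^\natural$ together with
$$\huaB^\natural\circ\ad_a=\ad^*_a\circ\huaB^\natural,\qquad \huaB^\natural\circ\huaR(a,b)=-\huaR^*(b,a)\circ\huaB^\natural.$$
Since $\huaB$ is nondegenerate, $\huaB^\natural$ is a linear bijection, so the substitution $\xi=\huaB^\natural(x)$, $\eta=\huaB^\natural(y)$, $\zeta=\huaB^\natural(z)$ sweeps out all test triples as $(x,y,z)$ ranges over $\g^{\times 3}$. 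Consequently it will suffice to show that each defining identity for $\tilde T:=T\circ\huaB^\natural$ as a relative Rota-Baxter operator on $(\g;\ad,\huaR)$ transforms, under this substitution, into the corresponding identity for $T$ as a relative Rota-Baxter operator on $(\g^*;\ad^*,-\huaR^*\tau)$.

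First I would dispose of the binary relation. Writing out $[\tilde Tx,\tilde Ty]=\tilde T(\ad_{\tilde Tx}y-\ad_{\tilde Ty}x)$ and pushing $\huaB^\natural$ through the argument of the outer $T$ via the first intertwining identity, the right-hand side immediately collapses to $T(\ad^*_{T\xi}\eta-\ad^*_{T\eta}\xi)$, which is precisely the binary Rota-Baxter identity for $T$ on the coadjoint representation. The ternary relation I would then handle term by term: the $\huaL$-summand transfers directly via the corollary, producing the $\huaL^*$-term, while the two $\huaR$-summands each pick up a minus sign and an argument swap from the second intertwining identity.

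The only real obstacle is bookkeeping the signs: one must check that the minus sign and argument swap from $\huaB^\natural\circ\huaR(a,b)=-\huaR^*(b,a)\circ\huaB^\natural$ combine with the $\tau$-swap and explicit minus sign in $\mu=-\huaR^*\tau$ so that the two $\huaR$-contributions reassemble correctly on both sides, yielding the coadjoint ternary condition
$$\Courant{T\xi,T\eta,T\zeta}=T\bigl(\huaL^*(T\xi,T\eta)\zeta-\huaR^*(T\zeta,T\eta)\xi+\huaR^*(T\zeta,T\xi)\eta\bigr).$$
Once this sign verification is carried out, bijectivity of $\huaB^\natural$ delivers both directions of the biconditional simultaneously, without any further work.
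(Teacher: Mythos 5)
Your proposal is correct and follows essentially the same route as the paper: both arguments rest on Lemma \ref{lem:qua} and its corollary, which make $\huaB^\natural$ an isomorphism of representations intertwining $\ad$ with $\ad^*$, $\huaR(a,b)$ with $-\huaR^*(b,a)$, and $\huaL(a,b)$ with $\huaL^*(a,b)$, after which one pushes $\huaB^\natural$ through the argument of the outer $T$ and uses bijectivity of $\huaB^\natural$ to get both directions at once. The sign bookkeeping you flag does close correctly, exactly as in the paper's displayed computation.
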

\begin{proof}
For all $x,y,z\in \g$, by Lemma \ref{lem:qua}, we have that
\begin{eqnarray*}
(T\circ\huaB^\natural)\Big([T\circ\huaB^\natural(x),y]+[x,T\circ\huaB^\natural(y)]\Big)&=&T\Big(\huaB^\natural(\ad_{T\circ\huaB^\natural(x)}y)
-\huaB^\natural(\ad_{T\circ\huaB^\natural(y)}x)\Big)\\
~ &=&T\Big(\ad^*_{T\circ\huaB^\natural(x)}\huaB^\natural(y)-\ad^*_{T\circ\huaB^\natural(y)}\huaB^\natural(x)\Big),
\end{eqnarray*}
and
\begin{eqnarray*}
~ &&(T\circ\huaB^\natural)\Big(\Courant{T\circ\huaB^\natural(x),T\circ\huaB^\natural(y),z}+\Courant{x,T\circ\huaB^\natural(y),T\circ\huaB^\natural(z)}
-\Courant{y,T\circ\huaB^\natural(x),T\circ\huaB^\natural(z)}\Big)\\
~ &=&T\Big(\huaL^*(T\circ\huaB^\natural(x),T\circ\huaB^\natural(y))\huaB^\natural(z)
-\huaR^*(T\circ\huaB^\natural(z),T\circ\huaB^\natural(y))\huaB^\natural(x)+\huaR^*(T\circ\huaB^\natural(z),T\circ\huaB^\natural(x))\huaB^\natural(y)\Big).
\end{eqnarray*}
Thus we obtain that $T$ is a relative Rota-Baxter operator on $(\g,\br,,\ltp)$ with respect to the coadjoint representation $(\g^*;\ad^*,-\huaR^*\tau)$ if and only if $T\circ\huaB^\natural$ is a relative Rota-Baxter operator on $(\g,\br,,\ltp)$ with respect to the adjoint representation $(\g;\ad,\huaR)$. This finishes the proof.
\end{proof}

Theorem \ref{sts} is a generalized result of Semonov-Tian-Shansky's in \cite{STS} to the context of \LYA s, whereas the generalized result of Leibniz algebra version was given in \cite{T.S2}. The following corollary is directly.

\begin{cor}
Let $\Big((\g,\br,,\ltp),\huaB\Big)$ be a quadratic \LYA. Then $r\in \wedge^2\g$ is a solution to the classical Lie-Yamaguti Yang-Baxter equation in $\g$ if and only if $r^\sharp\circ \huaB^\natural:\g\longrightarrow\g$ is a relative Rota-Baxter operator on $\g$ with respect to the adjoint representation $(\g;\ad,\huaR)$.
\end{cor}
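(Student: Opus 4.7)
The plan is to derive this corollary immediately by chaining Theorem \ref{THM:CYBE} and Theorem \ref{sts}. Both ingredients are already in hand, so no new calculation should be required; the task is really just to verify that the hypotheses line up.

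First I would set $T := r^\sharp : \g^* \to \g$, the linear map induced by the skew-symmetric $2$-tensor $r \in \wedge^2 \g$ via the pairing \eqref{rsharp}. By Theorem \ref{THM:CYBE}, the equation $[r,r] = 0$ together with $\Courant{r,r,r} = 0$ is equivalent to the statement that $T = r^\sharp$ is a relative Rota-Baxter operator on $(\g,\br,,\ltp)$ with respect to the coadjoint representation $(\g^*; \ad^*, -\huaR^*\tau)$.

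Next, applying Theorem \ref{sts} to this particular $T$, the fact that $r^\sharp$ is a relative Rota-Baxter operator with respect to the coadjoint representation is equivalent to the fact that $r^\sharp \circ \huaB^\natural : \g \to \g$ is a relative Rota-Baxter operator with respect to the adjoint representation $(\g;\ad,\huaR)$. Chaining these two equivalences yields exactly the claim.

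The only point that requires a moment of care is the hypothesis of nondegeneracy in the statement of Theorem \ref{THM:CYBE}: the corollary places no such restriction on $r$. However, an inspection of the proof of Theorem \ref{THM:CYBE} shows that nondegeneracy is not used in the computation translating $[r,r]$ and $\Courant{r,r,r}$ into the Rota-Baxter identities for $r^\sharp$; the relevant identity
\[
\pair{\kappa,\Courant{T(\xi),T(\eta),T(\zeta)}-T\Big(\huaL^*(T(\xi),T(\eta))\zeta-\huaR^*(T(\zeta),T(\eta))\xi+\huaR^*(T(\zeta),T(\xi))\eta\Big)}=\pair{\xi\otimes\eta\otimes\zeta\otimes\kappa,\Courant{r,r,r}}
\]
holds for any skew-symmetric $r$. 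Hence the corollary follows without any additional hypothesis. There is no genuine obstacle; the entire proof is one line once the two preceding theorems are invoked.
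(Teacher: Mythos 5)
Your proof is correct and is exactly the route the paper intends: the corollary is stated as an immediate consequence of chaining Theorem \ref{THM:CYBE} with Theorem \ref{sts}, and the paper offers no further argument. Your observation that the nondegeneracy hypothesis in Theorem \ref{THM:CYBE} is never used in its proof is accurate and worth noting, since the paper silently drops that hypothesis when passing to the corollary.
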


At the end of this section, we introduce the notion of local cocycle Lie-Yamaguti bialgebras.
\begin{defi}\label{local}
A {\bf local cocycle Lie-Yamaguti bialgebra} is a \LYA ~$(\g,\br,,\ltp)$ together with two linear maps $\delta=\delta_1+\delta_2:\g\longrightarrow\otimes^2\g$ and $\omega=\omega_1+\omega_2+\omega_3:\g\longrightarrow\otimes^3\g$ such that $(\delta^*,\omega^*)$ defines a \LYA ~structure on $\g^*$, and the following conditions are satisfied:
\begin{itemize}
\item $\delta_1$ is a $1$-cocycle with respect to the representation $(\otimes^2\g;1\otimes\ad,1\otimes\huaR)$;
\item $\delta_2$ is a $1$-cocycle with respect to the representation $(\otimes^2\g;\ad\otimes1,\huaR\otimes1)$;
\item $\omega_1$ is a $1$-cocycle with respect to the representation $(\otimes^3\g;\ad\otimes1\otimes1,\huaR\otimes1\otimes1)$;
\item $\omega_2$ is a $1$-cocycle with respect to the representation $(\otimes^3\g;1\otimes\ad\otimes1,1\otimes\huaR\otimes1)$;
\item $\omega_3$ is a $1$-cocycle with respect to the representation $(\otimes^3\g;1\otimes1\otimes\ad,1\otimes1\otimes\huaR)$.
\end{itemize}
\end{defi}

\begin{rmk}\label{ltpbi}
When a given \LYA ~$(\g,\br,,\ltp)$ reduces to a Lie triple system $(\g,\ltp)$, we obtain the local cocycle bialgebra structure of a Lie triple system: there exists a coalgebra structure $\omega=\omega_1+\omega_2+\omega_3:\g\longrightarrow\otimes^3\g$ on the Lie triple system $(\g,\ltp)$ such that the following conditions are satisfied:
\begin{itemize}
\item $\omega_1$ is a $1$-cocycle with respect to the representation $(\otimes^3\g;\ad\otimes1\otimes1,\huaR\otimes1\otimes1)$;
\item $\omega_2$ is a $1$-cocycle with respect to the representation $(\otimes^3\g;1\otimes\ad\otimes1,1\otimes\huaR\otimes1)$;
\item $\omega_3$ is a $1$-cocycle with respect to the representation $(\otimes^3\g;1\otimes1\otimes\ad,1\otimes1\otimes\huaR)$,
\end{itemize}
where $(\g;\huaR)$ is the adjoint representation of the Lie triple system $\g$.
\end{rmk}

\begin{rmk}
We would like to point out that $\delta_i$ and $\omega_j$ $(1\leqslant i\leqslant 2,1\leqslant j\leqslant 3)$ as in Eqs. \eqref{c1} and \eqref{c2} are not $1$-cocycles of a \LYA ~$\g$ in general, thus a solution to the classical Lie-Yamaguti Yang-Baxter equation can not give rise to a local cocycle Lie-Yamaguti bialgebra structure.
Unlike $3$-Lie algebras, even for a Lie triple system, these ${\omega_j}$'s are not $1$-cocycles any more, which implies that a solution to the classical Yang-Baxter equation does not
produce a local cocycle bialgebra structure in the context of Lie triple systems. This illustrates that there is a huge difference between $3$-Lie algebras and Lie triple systems.

%Contrary to the context of $3$-Lie algebras, when a \LYA ~even reduces to a Lie triple system, these ${\omega_j}$ are not $1$-cocycles any more, making it challenging that a solution to the classical Yang-Baxter equation gives rise to a local cocycle bialgebra structure in the context of Lie triple systems, as was referred in Remark \ref{ltpbi}. This illustrates that there is a huge gap between $3$-Lie algebras and Lie triple systems.
\end{rmk}

\section{Manin triples, matched pairs, and double construction Lie-Yamaguti bialgebras}
In this section, we consider double construction Lie-Yamaguti bialgebras and clarify the relationship between double construction Lie-Yamaguti bialgebras and local cocycle Lie-Yamaguti bialgebras.
First, we introduce the notion of Manin triples.

\begin{defi}\label{Manin}
Let $\g_1$ and $\g_2$ be two \LYA s. A {\bf Manin triple} of $\g_1$ and $\g_2$ is a quadratic \LYA ~$\Big((\g,\br,,\ltp),\huaB\Big)$ such that
\begin{itemize}
\item[(i)] $\g=\g_1\oplus\g_2$ as vector spaces;
\item[(ii)] $\g_1$ and $\g_2$ are subalgebras of $\g$ which  are isotropic, i.e., $\huaB(x_1,y_1)=\huaB(x_2,y_2)=0$, for any $x_1,y_1\in \g_1$ and $x_2,y_2\in \g_2$;
\item[(iii)] For all $x_1,y_1\in \g_1$ and $x_2,y_2\in \g_2$, we have
\begin{eqnarray*}
\pr_1\Courant{x_1,y_1,x_2}=0,\quad \pr_1\Courant{x_1,x_2,y_1}=0,\quad \pr_2\Courant{x_2,y_2,x_1}=0,\quad \pr_2\Courant{x_2,x_1,y_2}=0,
\end{eqnarray*}
where $\pr_1$ and $\pr_2$ are projections from $\g_1\oplus\g_2$ to $\g_1$ and $\g_2$ respectively.
\end{itemize}
We denote a Manin triple of \LYA s by $\Big((\g,\huaB),\g_1,\g_2\Big)$ or simply by $(\g,\g_1,\g_2)$.
\end{defi}

\begin{rmk}
Recall that a product structure on a \LYA ~$(\g,\br,,\ltp)$ is a Nijenhuis operator $E:\g\longrightarrow\g$ satisfying $E^2=\Id$.% such that the following equalities hold:
\emptycomment{
\begin{eqnarray*}
[Ex,Ey]&=&E[Ex,y]+E[x,Ey]-[x,y],\\
\Courant{Ex,Ey,Ez}&=&E\Courant{Ex,Ey,z}+E\Courant{x,Ey,Ez}+E\Courant{Ex,y,Ez}\\
~ &&-\Courant{Ex,y,z}-\Courant{x,Ey,z}-E\Courant{x,y,Ez}+E\Courant{x,y,z},\quad\forall x,y,z\in \g.
\end{eqnarray*}}
There exists a product structure $E$ on $\g$ if and only if $\g$ admits a decomposition into two subalgebras: $\g=\g_1\oplus\g_2$. Moreover, Condition (iii) in Definition \ref{Manin} is just the condition that makes the product structure perfect. See \cite{Sheng Zhao} for more details about product structures and complex structures on \LYA s. Thus a Manin triple $\Big((\g,\huaB),\g_1,\g_2\Big)$ of \LYA s is in fact the quadratic \LYA ~$(\g,\huaB)$ such that there is a perfect product structure on $\g$ whose decomposed subalgebras are isotropic.
\end{rmk}

\smallskip
Let $\Big((\g,\huaB),\g_1,\g_2\Big)$ and $\Big((\g',\huaB'),\g_1',\g_2'\Big)$ be two Manin triples of \LYA s. An {\bf isomorphism} between $\Big((\g,\huaB),\g_1,\g_2\Big)$ and $\Big((\g',\huaB'),\g_1',\g_2'\Big)$ is an isomorphism between \LYA s $f:\g\longrightarrow\g'$ such that
$$f(\g_1)\subset\g_1',\quad f(\g_2)\subset\g_2',\quad \huaB(x,y)=\huaB'(f(x),f(y)),\quad\forall x,y\in \g$$

\smallskip
Let $(\g,\br,,\ltp)$ and $(\g^*,\br_{_*},\ltp_*)$ be a \LYA s. There is a natural nondegenerate symmetric bilinear form $\huaB$ on $\g\oplus\g^*$ given by
\begin{eqnarray}
\huaB(x+\xi,y+\eta)=\langle x,\eta\rangle+\langle \xi,y\rangle,\quad \forall x,y\in \g,~\xi,\eta\in \g^*.\label{bilinear}
\end{eqnarray}

Define a pair of operations $([\cdot,\cdot]_{\g\oplus\g^*},\Courant{\cdot,\cdot,\cdot}_{\g\oplus\g^*})$ to be
\begin{eqnarray}
\label{double1}[x+\xi,y+\eta]_{\g\oplus\g^*}&=&[x,y]+\ad_x^*\xi-\ad_y^*\eta\\
~\nonumber &&+[\xi,\eta]_*+\mathfrak{ad}_\xi^*y-\mathfrak{ad}_\eta^*x,\\
\label{double2}\Courant{x+\xi,y+\eta,z+\zeta}_{\g\oplus\g^*}&=&\Courant{x,y,z}+\huaL^*(x,y)\zeta-\huaR^*(z,y)\xi+\huaR^*(z,x)\eta\\
~\nonumber &&+\Courant{\xi,\eta,\zeta}_*+\frkL^*(\xi,\eta)z-\frkR^*(\zeta,\eta)x+\frkR^*(\zeta,\xi)y,
\end{eqnarray}
for all $x,y,z\in \g$ and $\xi,\eta,\zeta\in \g^*$. Here $(\ad^*,-\huaR^*\tau)$ and $(\mathfrak{ad}^*,-\frkR^*\tau)$ are the coadjoint representations of $\g$ on $\g^*$ and $\g^*$ on $\g$ respectively, where $\huaL^*=D_{\ad^*,-\huaR^*\tau}$ and $\frkL^*=D_{\mathfrak{ad}^*,-\frkR^*\tau}$.

Note that the bracket $([\cdot,\cdot]_{\bowtie},\Courant{\cdot,\cdot,\cdot}_{\bowtie})$ given by \eqref{double1} and \eqref{double2} is invariant with respect to the bilinear form $\huaB$ given by \eqref{bilinear} and satisfies the Condition (iii) in Definition \ref{Manin}. If $(\g\oplus\g^*,\br_{_{\g\oplus\g^*}},\ltp_{\g\oplus\g^*})$ is a \LYA, then it is easy to see that $\g$ and $\g^*$ are isotropic subalgebras with respect to the bilinear form $\huaB$ given by \eqref{bilinear}. Consequently, $\Big((\g\oplus\g^*,\huaB),\g,\g^*\Big)$ is a Manin triple of $\g$ and $\g^*$, which is called the {\bf standard Manin  triple}.

\begin{pro}\label{stand}
Any Manin triple of \LYA s is isomorphic to a standard one.
\end{pro}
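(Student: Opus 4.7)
The plan is to identify $\g_2$ with the linear dual $\g_1^*$ via the nondegenerate pairing induced by $\huaB$, transport the Lie-Yamaguti structure of $\g_2$ to $\g_1^*$, and exhibit an explicit isomorphism between the given Manin triple and the resulting standard one on $\g_1\oplus\g_1^*$. Since $\huaB$ is nondegenerate on $\g=\g_1\oplus\g_2$ with both summands isotropic, a standard dimension argument forces $\dim\g_1=\dim\g_2$ and makes the restriction $\huaB|_{\g_1\times\g_2}$ into a nondegenerate pairing. Hence $\phi:\g_2\to\g_1^*$, defined by $\langle\phi(\xi),x\rangle:=\huaB(x,\xi)$ for $x\in\g_1,\xi\in\g_2$, is a linear isomorphism.

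The next step is to transport the bracket and triple bracket of $\g_2$ through $\phi$ to a Lie-Yamaguti structure $(\br_*,\ltp_*)$ on $\g_1^*$, and to form the standard Manin triple $\bigl((\g_1\oplus\g_1^*,\huaB_{\rm std}),\g_1,\g_1^*\bigr)$ with brackets (\ref{double1})--(\ref{double2}) and pairing (\ref{bilinear}). Define $f:\g_1\oplus\g_2\to\g_1\oplus\g_1^*$ by $f(x+\xi):=x+\phi(\xi)$. Using the isotropy of $\g_1,\g_2$ together with the symmetry of $\huaB$, one checks immediately that $\huaB_{\rm std}(f(u),f(v))=\huaB(u,v)$ for all $u,v\in\g$, so $f$ intertwines the bilinear forms and sends subalgebras to subalgebras.

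What remains is to verify that $f$ preserves the Lie-Yamaguti brackets. On pure components within $\g_1$ or within $\g_2$ (the latter identified with $\g_1^*$) this is automatic by construction. The core task is the mixed brackets, both binary and ternary, involving elements of both summands. The key tools are the invariance identities (\ref{invr1})--(\ref{invr2}) satisfied by $\huaB$, the projection conditions in Definition \ref{Manin}(iii), which force every triple bracket with two entries in one summand and one in the other to lie entirely in the opposite summand, and the nondegeneracy of $\huaB|_{\g_1\times\g_2}$. As a representative case, for $x\in\g_1$ and $\xi\in\g_2$ I would pair $[x,\xi]$ with $y\in\g_1$ and use (\ref{invr1}) to obtain $\huaB([x,\xi],y)=-\huaB(\xi,[x,y])=-\langle\phi(\xi),[x,y]\rangle=\langle\ad_x^*\phi(\xi),y\rangle$, which reproduces the coadjoint-action term appearing in (\ref{double1}); the $\g_1$-component of $[x,\xi]$ is handled symmetrically using the $\mathfrak{ad}^*$-term.

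The main obstacle will be the case analysis for the ternary structure: one must handle triple brackets with exactly one $\g_2$-entry (three positional cases, each projecting into $\g_2$ by Definition \ref{Manin}(iii)) and those with exactly two $\g_2$-entries (three positional cases, each projecting into $\g_1$), and match the resulting six formulas precisely against the six coadjoint terms $\huaL^*,\huaR^*,\frkL^*,\frkR^*$ appearing in (\ref{double2}), with correct signs. In each case the projection condition reduces the verification to a pairing against a single test vector in the opposite summand, which by (\ref{invr2}) unfolds into the corresponding coadjoint-action term. Once all mixed brackets are matched, $f$ is a bracket-preserving linear isomorphism sending $\g_1$ to $\g_1$, $\g_2$ to $\g_1^*$, and intertwining the bilinear forms, so the given Manin triple is isomorphic to the standard one.
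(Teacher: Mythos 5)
Your proposal is correct and follows essentially the same route as the paper's proof: identify $\g_2$ with $\g_1^*$ via the nondegenerate pairing induced by $\huaB$ (nondegenerate on $\g_1\times\g_2$ because both summands are isotropic), transport the Lie-Yamaguti structure of $\g_2$ to $\g_1^*$, and conclude that the triple is isomorphic to the standard one. The paper's proof is in fact terser than yours—it asserts the isomorphism without carrying out the mixed-bracket verification you outline, so your additional detail is consistent with (and fills in) the paper's argument rather than diverging from it.
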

\begin{proof}
Let $\g_1$ and $\g_2$ be \LYA s. If $\Big((\g=\g_1\oplus\g_2,\huaB),\g_1,\g_2\Big)$ is a Manin triple of $\g_1$ and $\g_2$, then $\g_2$ is isomorphic to $\g_1^*$ as vector spaces via
$$\langle \alpha,x\rangle:=\huaB(\alpha,x),\quad\forall \alpha\in \g_2,~x\in \g_1.$$
Moreover, $\g_1^*$ is equipped with a \LYA  ~structure from $\g_2$ via this isomorphism. Then $\Big((\g_1\oplus\g_2,\huaB),\g_1,\g_2\Big)$ is isomorphic to the standard Manin triple $\Big((\g_1\oplus\g_1^*,\huaB),\g_1,\g_1^*\Big)$. This completes the proof.
\end{proof}

\begin{rmk}
By the proof of Proposition \ref{stand}, we obtain that any Manin triple of \LYA s $\Big((\g,\huaB),\g_1,\g_2\Big)$ is also isomorphic to the standard Manin triple $\Big((\g_2^*\oplus\g_2,\huaB),\g_2^*,\g_2\Big)$ of $\g_2$ and $\g_2^*$. So the statement of the proposition is that any Manin triple of \LYA s is isomorphic to ``a" standard one, not ``the".
\end{rmk}

In the following, let us introduce the notion of matched pairs of \LYA s. Let $(\g_1,\br_{_1},\ltp_1)$ and $(\g_2,\br_{_2},\ltp_2)$ be \LYA s~ and $\rho_1:\g_1\longrightarrow\gl(\g_2),~\mu_1:\otimes^2\g_1\longrightarrow\gl(\g_2)$ and $\rho_2:\g_2\longrightarrow\gl(\g_1),~\mu_2:\otimes^2\g_2\longrightarrow\gl(\g_1)$ be linear maps. Define a pair of linear brackets $(\br_{_{\bowtie}},\ltp_{\bowtie})$ on $\g_1\oplus\g_2$ to be
\begin{eqnarray}
\label{MT1}[x+u,y+v]_{\bowtie}&=&[x,y]_1+\rho_2(u)y-\rho_2(v)x\\
~\nonumber &&+[u,v]_2+\rho_1(x)v-\rho_1(y)u,\\
\label{MT2}\Courant{x+u,y+v,z+w}_{\bowtie}&=&\Courant{x,y,z}_1+D_2(u,v)z+\mu_2(v,w)x-\mu_2(u,w)y\\
~\nonumber &&+\Courant{u,v,w}_2+D_1(x,y)w+\mu_2(y,z)u-\mu_2(x,z)v,
\end{eqnarray}
for all $x,y,z\in \g_1,~u,v,w\in \g_2$, where $D_1:=D_{\rho_1,\mu_1}$ and $D_2:=D_{\rho_2,\mu_2}$. Note that in general the bracket operation $(\br_{_{\bowtie}},\ltp_{\bowtie})$ need not satisfy the conditions of \LYA s.

\begin{rmk}
Note that the operation $(\br_{_{\g\oplus\g^*}},\ltp_{\g\oplus\g^*})$ defined by \eqref{double1} and \eqref{double2} is a special case for $(\br_{_{\bowtie}},\ltp_{\bowtie})$ defined by \eqref{MT1} and \eqref{MT2}, where $\g_1=\g,~\g^*=\g_2$, and $\rho_1=\ad^*,~\mu_1=-\huaR^*\tau,~\rho_2=\frkad^*,~\mu_2=-\frkR^*\tau$.
\end{rmk}

\begin{defi}
Let $(\g_1,\br_{_1},\ltp_1)$ and $(\g_2,\br_{_2},\ltp_2)$ be two \LYA s. If
%$(\g_2;\rho_1,\mu_1)$ is a representation  of $\g_1$ and $(\g_1;\rho_2,\mu_2)$ is a representation of $\g_2$, such that
the operation $(\br_{_{\bowtie}},\ltp_{\bowtie})$ defined by \eqref{MT1} and \eqref{MT2} forms a \LYA ~structure on $\g_1\oplus\g_2$, then  we say that a quadruple $\Big(\g_1,\g_2;(\rho_1,\mu_1),(\rho_2,\mu_2)\Big)$ is a {\bf matched pair} of \LYA s.
\end{defi}

\begin{pro}\label{pro:MT}
%Let $(\g_1,\br_{_1},\ltp_1)$ and $(\g_2,\br_{_2},\ltp_2)$ be \LYA s.
With the above notations, the quadruple $\Big(\g_1,\g_2;(\rho_1,\mu_1),(\rho_2,\mu_2)\Big)$ is a matched pair of \LYA s if and only if the following conditions hold
\begin{itemize}
\item[\rm(i)] $(\g_2;\rho_1,\mu_1)$ is a representation of $\g_1$;
\item[\rm (ii)] $(\g_1;\rho_2,\mu_2)$ is a representation of $\g_2$;
\item[\rm (iii)] the following equalities hold:
\begin{eqnarray}
\label{mtp1}~&&[\rho_2(u)x,y]_1-\rho_2(\rho_1(x)u)y-\rho_2(u)[x,y]_1-[\rho_2(u)y,x]_1+\rho_2(\rho_1(y)u)x=0,\\
~\label{mtp2} &&\Courant{\rho_2(u)x,y,z}_1=\Courant{\rho_2(u)y,x,z}_1\\
~ \label{mtp3}&&\mu_2(u,v)[x,y]_1-\mu_2(\rho_1(y)u,v)x+\mu_2(\rho_1(x)u,v)y=0,\\
~ \label{mtp4}&&\Courant{x,y,\rho_2(u)z}_1=\rho_2(D_1(x,y)u)z+\rho_2(u)\Courant{x,y,z}_1,\\
~ \label{mtp5}&&\mu_2(u,\rho_1(x)v)y=[x,\mu_2(u,v)y]_1,\\
~\label{mtp6} &&\rho_2(\mu_1(x,y)u)z=\rho_2(\mu_1(x,z)u)y,\\
~ &&\Courant{x,y,\mu_2(u,v)z}_1=\mu_2(u,v)\Courant{x,y,z}_1+\mu_2(D_1(x,y)u,v)z+\mu_2(u,D_1(x,y)v)z,\\
~\label{mtp8} &&\mu_2(u,\mu_1(x,y)v)z=\Courant{\mu_2(u,v)z,x,y}_1-D_2(v,\mu_1(z,x)u)y+\mu_2(v,\mu_1(z,y)u)x,\\
~\label{mtp10} &&\mu_2(u,\mu_2(x,y)v)z=D_2(\mu_1(z,x)u,v)y-\Courant{x,\mu_2(u,v)z,y}_1+\mu_2(v,\mu_2(z,y)u)x,\\
~\label{mtp9}&&[\rho_1(x)u,v]_1-\rho_1(\rho_2(u)x)v-\rho_1(x)[u,v]_2-[\rho_1(x)v,u]_2+\rho_1(\rho_2(v)x)u=0,\\
~ &&\Courant{\rho_1(x)u,v,w}_2=\Courant{\rho_1(x)v,u,w}_2\\
~ &&\mu_1(x,y)[u,v]_2-\mu_1(\rho_2(v)x,y)u+\mu_1(\rho_2(u)x,y)v=0,\\
~ &&\Courant{u,v,\rho_1(x)w}_2=\rho_1(D_2(u,v)x)w+\rho_1(x)\Courant{u,v,w}_2,\\
~ &&\mu_1(x,\rho_2(u)y)v=[u,\mu_1(x,y)v]_2,\\
~ &&\rho_1(\mu_2(u,v)x)w=\rho_1(\mu_2(u,w)x)v,\\
~ &&\Courant{u,v,\mu_1(x,y)w}_2=\mu_1(x,y)\Courant{u,v,w}_2+\mu_1(D_2(u,v)x,y)w+\mu_1(x,D_2(u,v)y)w,\\
~\label{mtp16} &&\mu_1(x,\mu_2(u,v)y)w=\Courant{\mu_1(x,y)w,u,v}_2-D_1(y,\mu_2(w,u)x)v+\mu_1(y,\mu_2(w,v)x)u,\\
~\label{mtp18} &&\mu_1(x,\mu_1(u,v)y)w=D_1(\mu_2(w,u)x,y)v-\Courant{u,\mu_1(x,y)w,v}_2+\mu_1(y,\mu_1(w,v)x)u,
\end{eqnarray}
\end{itemize}
for all $x,y,z\in \g_1$ and $u,v,w\in \g_2$. Here, $D_1=D_{\rho_1,\mu_1}$ and $D_2=D_{\rho_2,\mu_2}$.
\end{pro}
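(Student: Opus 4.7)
The plan is to verify the four Lie-Yamaguti axioms (Definition \ref{LY}) for the candidate structure $([\cdot,\cdot]_\bowtie, \langle\cdot,\cdot,\cdot\rangle_\bowtie)$ on $\g_1\oplus\g_2$ by substituting arguments drawn from $\g_1$ and $\g_2$ in every possible distribution and separating each resulting identity into its $\g_1$- and $\g_2$-components via the natural projections. When all arguments lie in $\g_1$ (resp.\ $\g_2$), each axiom reduces to the corresponding Lie-Yamaguti axiom for $\g_1$ (resp.\ $\g_2$), which holds by hypothesis. Thus only the mixed distributions need checking.

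First I would expand the binary Jacobi-like axiom with two arguments in $\g_1$ and one in $\g_2$. The $\g_2$-component, after collecting terms, produces the representation identity $\rho_1([x,y]_1)=[\rho_1(x),\rho_1(y)]$ for $\g_1$ acting on $\g_2$ (part of (i)) plus the mixed equation \eqref{mtp1}; the $\g_1$-component, by the obvious $(\g_1\leftrightarrow\g_2)$ symmetry, yields \eqref{mtp9}. The mirror substitution (one in $\g_1$, two in $\g_2$) produces the other such pair. Next I would treat the second axiom of Definition \ref{LY} with mixed arguments, which gives \eqref{mtp2}, \eqref{mtp3} together with the representation condition for $\mu_1$ paralleling (RLYb), and their $\g_1\leftrightarrow\g_2$ duals.

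Then I would tackle the third axiom $\langle x,y,[z,w]\rangle=[\langle x,y,z\rangle,w]+[z,\langle x,y,w\rangle]$. Its mixed expansions yield \eqref{mtp4}--\eqref{mtp6}, together with the representation relations (RLYd) and (RLY5) for $(\g_2;\rho_1,\mu_1)$ and $(\g_1;\rho_2,\mu_2)$, as well as their duals. Finally the fundamental identity, carrying five arguments, produces the richest list of mixed equations: expanding case by case yields the remaining conditions \eqref{mtp8}, \eqref{mtp10}, \eqref{mtp16}, \eqref{mtp18} and their interior counterparts, together with (RLYe) and (RYT4) for both representations; this step also forces the very definition of $D_1,D_2$ in \eqref{rep} to match the $D_{\rho_i,\mu_i}$ appearing in \eqref{MT2}. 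Conversely, each of the listed conditions is what one obtains from precisely one mixed substitution, so the ``if'' direction follows from the same computation read backwards.

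The main obstacle will be organizational rather than conceptual: each axiom produces $2^k$ subcases ($k=3,3,4,5$), many of which either coincide under the cyclic/interchange symmetries of the axiom or collapse to axioms already holding in $\g_1$ or $\g_2$. The bookkeeping is best handled by fixing an axiom and a distribution, fully expanding via \eqref{MT1} and \eqref{MT2}, and then projecting onto $\g_1$ and $\g_2$ separately; the careful step is tracking which terms combine into the expressions $D_1(x,y)$ or $D_2(u,v)$ via \eqref{rep}, as this is what turns a priori distinct algebraic expressions into the compact forms \eqref{mtp4}--\eqref{mtp18}. Once this correspondence is set up, the equivalence between the Lie-Yamaguti axioms for $(\g_1\oplus\g_2,[\cdot,\cdot]_\bowtie,\langle\cdot,\cdot,\cdot\rangle_\bowtie)$ and the union of conditions (i), (ii), (iii) is immediate.
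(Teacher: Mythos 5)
Your proposal follows exactly the route the paper intends (the paper itself records only ``It is a direct computation, so we omit the details''): expand each of the four axioms of Definition \ref{LY} for $(\br_{_{\bowtie}},\ltp_{\bowtie})$ over all distributions of arguments between $\g_1$ and $\g_2$, project onto the two summands, and identify the resulting identities with conditions (i), (ii), and \eqref{mtp1}--\eqref{mtp18}. One bookkeeping slip: the $\g_2$-component of the first axiom with $x,y\in\g_1$, $u\in\g_2$ does not produce $\rho_1([x,y]_1)=[\rho_1(x),\rho_1(y)]$ as ``part of (i)'' --- that is not an axiom of a Lie-Yamaguti representation --- rather, the $\rho_1$-terms cancel against the $D_1$- and $\mu_1$-terms coming from the ternary brackets precisely because of the defining relation \eqref{rep}, so that component is automatically zero, leaving only \eqref{mtp1} from the $\g_1$-component.
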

\begin{proof}
It is a direct computation, so we omit the details.
\end{proof}

A direct computation leads to the following corollary.
\begin{cor}
With the assumptions in Proposition \ref{pro:MT}, we have the following equalities:
\begin{eqnarray*}
~ D_2(\rho_1(x)u,v)&=&D_2(\rho_1(x)v,u),\\
~ D_2(u,v)[x,y]_1&=&[D_2(u,v)x,y]_1+[x,D_2(u,v)y]_1,\\
~ D_2(u,v)\Courant{x,y,z}_1&=&\Courant{D_2(u,v)x,y,z}_1+\Courant{x,D_2(u,v)y,z}_1+\Courant{x,y,D_2(u,v)z}_1,\\
~ D_1(\rho_2(u)x,y)&=&D_1(\rho_2(u)y,x),\\
~ D_2(x,y)[u,v]_2&=&[D_2(x,y)u,v]_2+[u,D_2(x,y)v]_2,\\
~ D_2(x,y)\Courant{u,v,w}_2&=&\Courant{D_2(x,y)u,v,w}_2+\Courant{u,D_2(x,y)v,w}_2+\Courant{u,v,D_2(x,y)w}_2,
\end{eqnarray*}
for all $x,y,z\in \g_1$ and $u,v,w\in \g_2$.
\end{cor}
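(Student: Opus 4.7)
The proof strategy exploits the fact that once $\Big(\g_1,\g_2;(\rho_1,\mu_1),(\rho_2,\mu_2)\Big)$ is a matched pair, the direct sum $\g_1\oplus\g_2$ with brackets $([\cdot,\cdot]_{\bowtie},\Courant{\cdot,\cdot,\cdot}_{\bowtie})$ is an honest \LYA. Therefore every axiom in Definition \ref{LY} holds for elements of the big algebra, and the identities of the corollary are obtained by applying these axioms to carefully chosen mixed elements (some in $\g_1$, some in $\g_2$) and projecting to the appropriate summand. The basic book-keeping facts I would record first are: when $u,v\in\g_2$ and $x\in\g_1$, formula \eqref{MT2} collapses to
\[
\Courant{u,v,x}_{\bowtie}=D_2(u,v)x\in\g_1,\qquad \Courant{x,u,v}_{\bowtie}=\mu_2(u,v)x\in\g_1,
\]
and symmetrically $\Courant{x,y,u}_{\bowtie}=D_1(x,y)u$, $\Courant{u,x,y}_{\bowtie}=\mu_1(x,y)u$ for $x,y\in\g_1$, $u\in\g_2$, while binary brackets of two elements lying in the same factor coincide with the bracket of that factor.

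Given these identifications, the derivation-type identities of the corollary are almost automatic. To obtain $D_2(u,v)[x,y]_1=[D_2(u,v)x,y]_1+[x,D_2(u,v)y]_1$, I would apply the \LYA~axiom $\Courant{a,b,[c,d]}=[\Courant{a,b,c},d]+[c,\Courant{a,b,d}]$ inside $\g_1\oplus\g_2$ with $a=u$, $b=v\in\g_2$ and $c=x$, $d=y\in\g_1$; every term is automatically in $\g_1$ (no projection needed) and reduces to the claimed identity via the book-keeping above. The ternary derivation identity $D_2(u,v)\Courant{x,y,z}_1=\sum_{\text{slots}}\Courant{\cdots}_1$ is obtained identically from the fundamental identity $\Courant{a,b,\Courant{c,d,e}}=\Courant{\Courant{a,b,c},d,e}+\Courant{c,\Courant{a,b,d},e}+\Courant{c,d,\Courant{a,b,e}}$ with $a=u$, $b=v\in\g_2$ and $c,d,e\in\g_1$. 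The three analogous statements for $D_1$ (which I read as $D_1(x,y)[u,v]_2$ etc., correcting what I take to be typos in the subscripts of the last two lines of the corollary) follow by the mirror argument, swapping the roles of $\g_1$ and $\g_2$.

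The two remaining identities $D_2(\rho_1(x)u,v)=D_2(\rho_1(x)v,u)$ and $D_1(\rho_2(u)x,y)=D_1(\rho_2(u)y,x)$ are the ones I expect to require the most care. My plan here is to use the \LYA~axiom $\Courant{[a,b],c,d}+\Courant{[b,c],a,d}+\Courant{[c,a],b,d}=0$ with $a=x\in\g_1$ and $b=u$, $c=v\in\g_2$ (together with $d=y\in\g_1$), so that $[x,u]_{\bowtie}=\rho_1(x)u-\rho_2(u)x$ and $[x,v]_{\bowtie}=\rho_1(x)v-\rho_2(v)x$ produce terms of the form $\Courant{\rho_1(x)u,v,y}_{\bowtie}$ and $\Courant{\rho_1(x)v,u,y}_{\bowtie}$ which equal $D_2(\rho_1(x)u,v)y$ and $D_2(\rho_1(x)v,u)y$ respectively. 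After subtracting the two expressions and projecting onto $\g_1$, the $\g_2$-internal terms (from $[u,v]_2$) and all $\mu_1,\mu_2$ contributions organize into identities already available in Proposition \ref{pro:MT} (in particular \eqref{mtp2}, \eqref{mtp4}, and \eqref{mtp6}), leaving the desired equality. The mirror argument with $a=u\in\g_2$ and $b=x$, $c=y\in\g_1$ yields the $D_1$ version.

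The main obstacle is precisely this last step: tracking exactly which combinations of $\rho_i,\mu_i$ terms cancel out versus which ones survive to give the named identity. The derivation identities are essentially a direct read-off, but the skew-type identities require choosing the right instance of a \LYA~axiom and invoking several of the Proposition \ref{pro:MT} compatibilities in combination; the bookkeeping of $\g_1$- versus $\g_2$-components at each intermediate step is where errors are easiest to make.
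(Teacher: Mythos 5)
Your proposal is correct, and it takes a more structural route than the paper, which offers no argument beyond the phrase ``a direct computation'' --- i.e.\ presumably a term-by-term verification starting from conditions (i)--(iii) of Proposition \ref{pro:MT} and the definition $D_i=\mu_i\tau-\mu_i+[\rho_i,\rho_i]-\rho_i[\cdot,\cdot]_i$. You instead read every identity off the Lie-Yamaguti axioms of the double $\g_1\oplus\g_2$ applied to mixed elements, using the dictionary $\Courant{u,v,x}_{\bowtie}=D_2(u,v)x$, $\Courant{x,u,v}_{\bowtie}=\mu_2(u,v)x$, etc.; this is legitimate because under the standing assumption the bracket $(\br_{_{\bowtie}},\ltp_{\bowtie})$ is an honest \LYA\ structure. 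The dictionary itself is right (modulo the paper's typo in \eqref{MT2}, where the last two terms should read $\mu_1(y,z)u-\mu_1(x,z)v$), and you are also right to read the last two lines of the corollary as $D_1(x,y)$ rather than $D_2(x,y)$. Two remarks. First, the step you flag as delicate is actually cleaner than you fear: taking the axiom $\Courant{[a,b],c,d}+\Courant{[b,c],a,d}+\Courant{[c,a],b,d}=0$ with $a=x,d=y\in\g_1$ and $b=u,c=v\in\g_2$, the only $\g_1$-valued terms are $D_2(\rho_1(x)u,v)y-D_2(\rho_1(x)v,u)y$, while every $\mu_1$-contribution (e.g.\ $\Courant{\rho_2(u)x,v,y}_{\bowtie}=-\mu_1(\rho_2(u)x,y)v$ and $\Courant{[u,v]_2,x,y}_{\bowtie}=\mu_1(x,y)[u,v]_2$) lies entirely in $\g_2$ and so vanishes under the projection $\pr_1$ --- there is nothing to ``organize'' and no need to invoke \eqref{mtp2}, \eqref{mtp4}, or \eqref{mtp6}; the $\g_2$-projection merely reproduces an identity already on the list. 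Second, what your approach buys is conceptual transparency (each line of the corollary is a single \LYA\ axiom of the double, specialized and projected), at the cost of first granting that the double is a \LYA, which is exactly the content of the matched-pair hypothesis; the paper's implicit computation works purely at the level of the structure constants $\rho_i,\mu_i$ and never forms the double. Both are valid; yours is shorter to check.
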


The following proposition reveals the relationship between matched pairs and Manin triples of \LYA s.
\begin{pro}\label{thm:mt}
Let $(\g,\br,,\ltp)$ and $(\g^*,\br_{_*},\ltp_*)$ be \LYA s. Then the quadruple $\Big(\g,\g^*;(\ad^*,-\huaR^*\tau),(\mathfrak{ad}^*,-\frkR^*\tau)\Big)$ is a matched pair of $\g$ and $\g^*$ if and only if the triple $\Big((\g\oplus\g^*,\huaB),\g,\g^*\Big)$ is a Manin triple of $\g$ and $\g^*$, where the invariant bilinear form $\huaB$ is given by Eq. \eqref{bilinear}.
\end{pro}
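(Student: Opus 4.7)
The plan is to recognize that when the general bracket $(\br_{_{\bowtie}},\ltp_{\bowtie})$ from \eqref{MT1}--\eqref{MT2} is specialized to $\g_1=\g$, $\g_2=\g^*$ with $(\rho_1,\mu_1)=(\ad^*,-\huaR^*\tau)$ and $(\rho_2,\mu_2)=(\mathfrak{ad}^*,-\frkR^*\tau)$, it coincides term-by-term with the bracket $(\br_{_{\g\oplus\g^*}},\ltp_{\g\oplus\g^*})$ defined by \eqref{double1}--\eqref{double2}. By definition of matched pair, the matched pair hypothesis is therefore equivalent to asserting that this specific bracket endows $\g\oplus\g^*$ with a \LYA ~structure. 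The task then reduces to showing that this \LYA ~structure, together with the canonical form $\huaB$ in \eqref{bilinear}, is precisely what constitutes a Manin triple in the sense of Definition \ref{Manin}.

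For the direction ($\Rightarrow$), assuming the matched pair, I first observe that $\g$ and $\g^*$ are visibly subalgebras of the constructed \LYA ~(the pure $\g$-bracket and pure $\g^*$-bracket reappear in \eqref{double1}--\eqref{double2}), and they are isotropic for $\huaB$ by the very form of $\huaB$. The projection condition (iii) of Definition \ref{Manin} follows by direct inspection of \eqref{double2}: for instance,
\begin{eqnarray*}
\Courant{x,y,\xi}_{\g\oplus\g^*}&=&\huaL^*(x,y)\xi\in\g^*,
\end{eqnarray*}
and the three analogous mixed expressions each land in a single summand. The remaining check is the invariance of $\huaB$ with respect to both brackets; this is a multilinear verification which, after decomposing each argument along $\g\oplus\g^*$, reduces the pure $\g$ and pure $\g^*$ cases to the isotropy condition and the mixed cases to the defining identities of the coadjoint actions $\ad^*,\huaR^*,\mathfrak{ad}^*,\frkR^*$.

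For the converse ($\Leftarrow$), the Manin triple hypothesis provides a \LYA ~structure on $\g\oplus\g^*$ for which $\g,\g^*$ are isotropic subalgebras obeying the projection condition. The projection condition forces every mixed bracket to decompose as a sum of a pure $\g$-component and a pure $\g^*$-component, each of which must be a specific multilinear operation in its inputs. The invariance of $\huaB$ then identifies these operations uniquely with the coadjoint operations: for example, for $x,y\in\g$ and $\xi\in\g^*$,
\begin{eqnarray*}
\langle\pr_2[\xi,x]_{\g\oplus\g^*},y\rangle&=&\huaB([\xi,x]_{\g\oplus\g^*},y)=-\huaB(x,[\xi,y]_{\g\oplus\g^*})=-\langle\xi,-\mathfrak{ad}_\xi^*x\cdots\rangle,
\end{eqnarray*}
and pairing against a $\g^*$-element recovers the $\g$-component as $-\mathfrak{ad}_\xi^*x$; the same template applied to the ternary form determines every coefficient and reproduces \eqref{double1}--\eqref{double2} exactly. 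Once the bracket is pinned down to this form, the \LYA ~axioms on $\g\oplus\g^*$ translate verbatim into the list of compatibilities \eqref{mtp1}--\eqref{mtp18} with the coadjoint representations substituted in, which is the matched pair condition.

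The main obstacle I anticipate is the combinatorial bookkeeping in the invariance verification for the ternary bracket: there are $2^3=8$ cases corresponding to each argument being in $\g$ or $\g^*$, and the sign conventions (the $-\huaR^*\tau$ versus $\huaR^*$, the symmetry identity $\huaB(\Courant{x,y,z},w)=\huaB(x,\Courant{w,z,y})$, and the transposition by $\tau$) must be tracked with care. None of the individual computations is deep, but assembling them into a clean proof will require a systematic case analysis; once this is done, the converse direction follows almost for free because the projection condition plus invariance rigidifies the bracket into exactly the shape of \eqref{double1}--\eqref{double2}.
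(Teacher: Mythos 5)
Your proposal is correct and follows essentially the same route as the paper: the forward direction reduces to noting that the matched-pair bracket specializes to \eqref{double1}--\eqref{double2}, checking isotropy and the projection condition by inspection, and verifying invariance of $\huaB$ by direct computation, while the converse uses invariance of $\huaB$ to force the mixed operations to be the coadjoint ones, recovering the matched-pair form. (One cosmetic slip: in your illustrative converse computation the $\g$-component of $[\xi,x]_{\g\oplus\g^*}$ comes out as $+\mathfrak{ad}^*_\xi x$, not $-\mathfrak{ad}^*_\xi x$, consistent with \eqref{double1}.)
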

\begin{proof}
Let $\Big(\g,\g^*;(\ad^*,-\huaR^*\tau),(\mathfrak{ad}^*,-\frkR^*\tau)\Big)$ be  a matched pair of \LYA s. Then $(\g\oplus\g^*,[\cdot,\cdot]_{\g\oplus\g^*},\Courant{\cdot,\cdot,\cdot}_{\g\oplus\g^*})$ is a \LYA, where $([\cdot,\cdot]_{\g\oplus\g^*},\Courant{\cdot,\cdot,\cdot}_{\g\oplus\g^*})$ is given by \eqref{double1} and \eqref{double2}. We only need to prove that $\huaB$ satisfies the invariant condition \eqref{invr1} and \eqref{invr2}. Indeed, for all $x,y,z,w\in \g$ and $\xi,\eta,\zeta,\delta\in \g^*$, we have
\begin{eqnarray*}
~ &&\huaB(x+\xi,[y+\eta,z+\zeta]_{\g\oplus\g^*})\\
~ &=&\huaB(x+\xi,[y,z]+\ad_y^*\zeta-\ad_z^*\eta+[\eta,\zeta]_*+\frkad_\eta^*z-\frkad_\zeta^*y)\\
~ &=&\pair{x,\ad_y^*\zeta-\ad_z^*\eta+[\eta,\zeta]_*}+\pair{\xi,[y,z]+\frkad_\eta^*z-\frkad_\zeta^*y}\\
~ &=&-\pair{[y,x],\zeta}+\pair{[z,x],\eta}+\pair{x,[\eta,\zeta]_*}\\
~ &&+\pair{\xi,[y,z]}-\pair{[\eta,\xi]_*,z}+\pair{[\zeta,\xi]_*,y},
\end{eqnarray*}
on the other hand, we also have that
\begin{eqnarray*}
~ &&\huaB([x+\xi,y+\eta]_{\g\oplus\g^*},z+\zeta)\\
~ &=&\huaB([x,y]+\ad_x^*\eta-\ad_y^*\xi+[\xi,\eta]_*+\frkad_\xi^*y-\frkad_\eta^*x,z+\zeta)\\
~ &=&\pair{\ad_x^*\eta-\ad_y^*\xi+[\xi,\eta]_*,z}+\pair{[x,y]+\frkad_\xi^*y-\frkad_\eta^*x,\zeta}\\
~ &=&-\pair{\eta,[x,z]}+\pair{\xi,[y,z]}+\pair{[\xi,\eta]_*,z}\\
~ &&+\pair{[x,y],\zeta}-\pair{y,[\xi,\zeta]_*}+\pair{x,[\eta,\zeta]_*},
\end{eqnarray*}
which implies that
$$\huaB(x+\xi,[y+\eta,z+\zeta]_{\g\oplus\g^*})=\huaB([x+\xi,y+\eta]_{\g\oplus\g^*},z+\zeta).$$
Moreover, we have
\begin{eqnarray*}
~ &&\huaB(\Courant{x+\xi,y+\eta,z+\zeta}_{\g\oplus\g^*},w+\delta)\\
~ &=&\huaB\Big(\Courant{x,y,z}+\huaL^*(x,y)\zeta-\huaR^*(z,y)\xi+\huaR^*(z,x)\eta\\
~ &&+\Courant{\xi,\eta,\zeta}_*+\frkL^*(\xi,\eta)z-\frkR^*(\zeta,\eta)x
+\frkR^*(\zeta,\xi)y,w+\delta\Big)\\
~ &=&\pair{\Courant{x,y,z}+\frkL^*(\xi,\eta)z-\frkR^*(\zeta,\eta)x+\frkR^*(\zeta,\xi)y,\delta}\\
~ &&+\pair{\Courant{\xi,\eta,\zeta}_*+\huaL^*(x,y)\zeta-\huaR^*(z,y)\xi+\huaR^*(z,x)\eta,w}\\
~ &=&\pair{\Courant{x,y,z},\delta}-\pair{z,\Courant{\xi,\eta,\delta}_*}+\pair{x,\Courant{\delta,\zeta,\eta}_*}-\pair{y,\Courant{\delta,\zeta,\xi}_*}\\
~ &&+\pair{\Courant{\xi,\eta,\zeta}_*,w}-\pair{\zeta,\Courant{x,y,w}_*}+\pair{\xi,\Courant{w,z,y}}-\pair{\eta,\Courant{w,z,x}},
\end{eqnarray*}
on the other hand, we also have that
\begin{eqnarray*}
~ &&\huaB(x+\xi,\Courant{w+\delta,z+\zeta,y+\eta}_{\g\oplus\g^*})\\
~ &=&\huaB\Big(x+\xi,\Courant{w,z,y}+\huaL^*(w,z)\eta-\huaR^*(y,z)\delta+\huaR^*(y,w)\zeta\\
~ &&+\Courant{\delta,\zeta,\eta}_*+\frkL^*(\delta,\zeta)y-\frkR^*(\eta,\zeta)w+\frkR^*(\eta,\delta)z\Big)\\
~ &=&\pair{x,\Courant{\delta,\zeta,\eta}_*+\huaL^*(w,z)\eta-\huaR^*(y,z)\delta+\huaR^*(y,w)\zeta}\\
~ &&+\pair{\xi,\Courant{w,z,y}+\frkL^*(\delta,\zeta)y-\frkR^*(\eta,\zeta)w+\frkR^*(\eta,\delta)z}\\
~ &=&\pair{x,\Courant{\delta,\zeta,\eta}_*}-\pair{\Courant{w,z,x},\eta}+\pair{\Courant{x,y,z},\delta}-\pair{\Courant{x,y,w},\zeta}\\
~ &&+\pair{\xi,\Courant{w,z,y}}-\pair{\Courant{\delta,\zeta,\xi}_*,y}+\pair{\Courant{\xi,\eta,\zeta}_*,w}-\pair{\Courant{\xi,\eta,\delta}_*,z},
\end{eqnarray*}
which implies that
$$\huaB(\Courant{x+\xi,y+\eta,z+\zeta}_{\g\oplus\g^*},w+\delta)=\huaB(x+\xi,\Courant{w+\delta,z+\zeta,y+\eta}_{\g\oplus\g^*}).$$

Conversely, if $\Big((\g\oplus\g^*,\huaB),\g,\g^*\Big)$ is a Manin triple, where $\huaB$ is an invariant bilinear form given by \eqref{bilinear}. For all $x\in \g$ and $\xi,\eta,\zeta\in \g^*$, by \eqref{invr1}, we have
\begin{eqnarray*}
\pair{\eta,\rho_2(\xi)x}=\huaB(\eta,[\xi,x]_{\g\oplus\g^*})=-\huaB([\xi,\eta]_*,x)=-\pair{[\xi,\eta]_*,x}=\langle \eta,\frkad_\xi^*x\rangle,
\end{eqnarray*}
which implies that $\rho_2=\frkad^*.$ Moreover, by \eqref{invr2}, we also have
\begin{eqnarray*}
\pair{\zeta,\mu_2(\xi,\eta)x}=\huaB(\zeta,\Courant{x,\xi,\eta}_{\g\oplus\g^*})=\huaB(\Courant{\zeta,\eta,\xi}_*,x)=\pair{\Courant{\zeta,\eta,\xi}_*,x}
=-\pair{\zeta,\frkR^*(\eta,\xi)x},
\end{eqnarray*}
which implies that $\mu_2=-\frkR^*\tau.$ Similarly, we have that $\rho_1=\ad^*$ and $\mu_1=-\huaR^*\tau$. Thus we obtain that $\Big(\g,\g^*,(\ad^*,-\huaR^*\tau),(\frkad^*,-\frkR^*\tau)\Big)$ is a matched pair of \LYA s. This completes the proof.
\end{proof}

It is in a position to introduce the notion of double construction Lie-Yamaguti bialgebras. Before this, we show the following proposition.
\begin{pro}\label{LYbi}
Let $(\g,\br,,\ltp)$ be a \LYA ~endowed with two linear maps $\delta:\g\longrightarrow\otimes^2\g$ and $\omega:\g\longrightarrow\otimes^3\g$. Then $\Big(\g,\g^*;(\ad^*,-\huaR^*\tau),(\frkad^*,-\frkR^*\tau)\Big)$ is a matched pair of $(\g,\br,,\ltp)$ and $(\g^*,\br_{_*},\ltp_*)$ if and only if the following conditions are satisfied
\begin{itemize}
\item[\rm(i)] $(\g,\delta,\omega)$ is a Lie-Yamaguti coalgebra;
\item[\rm(ii)] the following compatibility conditions are satisfied: $\forall x,y,z\in \g,$
\begin{eqnarray}
~\label{LYbi1} \delta([x,y])&=&\Big(\ad_x\otimes1+1\otimes\ad_x\Big)\delta(y)-\Big(\ad_y\otimes1+1\otimes\ad_y\Big)\delta(x),\\
~\label{LYbi2}\Big(1\otimes \huaR(y,z)\Big)\delta(x)&=&\Big(1\otimes \huaR(x,z)\Big)\delta(y),\\
~\label{LYbi3} \omega([x,y])&=&\Big(1\otimes1\otimes\ad_x\Big)\omega(y)-\Big(1\otimes1\otimes\ad_y\Big)\omega(x),\\
~\delta(\Courant{x,y,z})&=&\Big(\huaL(x,y)\otimes 1+1\otimes\huaL(x,y)\Big)\omega(z),\\
~\Big(\huaR(y,z)\otimes1\Big)\delta(x)&=&\Big(\huaR(x,z)\otimes1\Big)\delta(y),\\
~\omega(\Courant{x,y,z})&=&\Big(\huaL(x,y)\otimes1\otimes1+1\otimes\huaL(x,y)\otimes1+1\otimes1\otimes\huaL(x,y)\Big)\omega(z),
\end{eqnarray}
\begin{eqnarray}
~\label{LYbi7} \Big(1\otimes \huaR(y,x)\otimes1-R(x,y)\otimes1\otimes1\Big)\omega(z)&=&\sigma_{12}\sigma_{23}\Big(1\otimes\huaR(x,z)\otimes1\Big)\omega(y)\\
~\nonumber&&+\sigma_{23}\Big(1\otimes\huaR(y,z)\otimes1\Big)\omega(x).
\end{eqnarray}
\end{itemize}
\end{pro}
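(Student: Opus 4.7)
The plan is to specialize Proposition \ref{pro:MT} to the case $\g_1=\g$, $\g_2=\g^*$ with $\rho_1=\ad^*$, $\mu_1=-\huaR^*\tau$, $\rho_2=\frkad^*$, $\mu_2=-\frkR^*\tau$, and then translate everything into conditions on $\delta$ and $\omega$ via the canonical pairings.

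First I would observe that under the duality identities $\langle\delta(x),\xi\otimes\eta\rangle=\langle x,[\xi,\eta]_*\rangle$ and $\langle\omega(x),\xi\otimes\eta\otimes\zeta\rangle=\langle x,\Courant{\xi,\eta,\zeta}_*\rangle$, saying that $(\g,\delta,\omega)$ is a Lie-Yamaguti coalgebra is equivalent to saying that $(\g^*,[\cdot,\cdot]_*,\Courant{\cdot,\cdot,\cdot}_*)$ is a Lie-Yamaguti algebra. Granted this, Proposition \ref{dual} guarantees automatically that $(\g^*;\ad^*,-\huaR^*\tau)$ is a representation of $\g$ and $(\g;\frkad^*,-\frkR^*\tau)$ is a representation of $\g^*$, so conditions (i) and (ii) of Proposition \ref{pro:MT} are satisfied without further assumption. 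The entire content of the proposition thus reduces to showing that condition (iii) of Proposition \ref{pro:MT} — the eighteen compatibility equations \eqref{mtp1}--\eqref{mtp18} — is equivalent to the compatibility list \eqref{LYbi1}--\eqref{LYbi7}.

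Next I would carry out the dualization term by term, using
$$\langle\ad^*_x\xi,y\rangle=-\langle\xi,[x,y]\rangle,\quad \langle\huaR^*(x,y)\xi,z\rangle=-\langle\xi,\Courant{z,x,y}\rangle,\quad \langle\huaL^*(x,y)\xi,z\rangle=-\langle\xi,\Courant{x,y,z}\rangle,$$
and the analogous formulas on the $\g^*$ side. As a representative sample: pairing equation \eqref{mtp1} against $\eta\in\g^*$ yields
$$\langle[x,y],[\xi,\eta]_*\rangle=\langle x,[\ad^*_y\xi,\eta]_*\rangle+\langle x,[\xi,\ad^*_y\eta]_*\rangle-\langle y,[\ad^*_x\xi,\eta]_*\rangle-\langle y,[\xi,\ad^*_x\eta]_*\rangle,$$
which is precisely the pairing of \eqref{LYbi1} against $\xi\otimes\eta$. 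Similarly, I expect \eqref{mtp2} and \eqref{mtp3} to collapse to \eqref{LYbi2}, the equations \eqref{mtp4}--\eqref{mtp6} governing $\rho_2$ and the trilinear bracket to dualize to the conditions involving $\omega$ and $[\cdot,\cdot]$, and \eqref{mtp7}--\eqref{mtp8} to the remaining conditions on $\omega$ and $\Courant{\cdot,\cdot,\cdot}$. By the inherent $\g/\g^*$ symmetry of the matched pair formalism and the duality between adjoint and coadjoint actions, the equations \eqref{mtp9}--\eqref{mtp18} are exactly the $\g^*$-side duals of \eqref{mtp1}--\eqref{mtp8}, hence produce no independent conditions after pairing.

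The main obstacle is bookkeeping rather than conceptual. The trilinear equations \eqref{mtp4}--\eqref{mtp8} are the most delicate because $D_{\rho,\mu}$ dualizes to $\huaL^*$ (respectively $\frkL^*$) and one must carefully track the switching operator $\tau$ in the expressions $\mu_1=-\huaR^*\tau$ and $\mu_2=-\frkR^*\tau$; sign errors and misplaced transpositions are easy to make. One must also verify that the eight equations \eqref{mtp1}--\eqref{mtp8} collapse to only the seven listed compatibilities \eqref{LYbi1}--\eqref{LYbi7}, either by checking redundancies directly or by noting that the required Lie-Yamaguti identities on $\g$ absorb one of the dualized equations. Once this translation is complete, both implications follow simultaneously since the dualization is reversible.
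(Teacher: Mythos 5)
Your proposal follows essentially the same route as the paper: specialize Proposition \ref{pro:MT} to the coadjoint representations, observe that conditions (i) and (ii) there hold automatically once $(\g^*,\delta^*,\omega^*)$ is a Lie-Yamaguti algebra (equivalently, $(\g,\delta,\omega)$ is a Lie-Yamaguti coalgebra), and dualize the eighteen compatibility equations, the second nine being the $\g^*$-side mirrors of the first nine. The only detail to settle in your bookkeeping is that the paper matches \eqref{mtp1}--\eqref{mtp4} and \eqref{mtp6}--\eqref{mtp8} one-to-one with \eqref{LYbi1}--\eqref{LYbi7}, while \eqref{mtp5} dualizes to the skew-symmetry of $\omega$ in its first two arguments (hence is absorbed into the coalgebra condition) and \eqref{mtp10} is equivalent to \eqref{mtp8} --- rather than \eqref{mtp2} and \eqref{mtp3} both collapsing to \eqref{LYbi2} as you guessed.
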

\begin{proof}
It is sufficient to show that Eqs. \eqref{mtp1}-\eqref{mtp18} are equivalent to Condition (ii). Note that when $\rho_1=\ad^*, ~\mu_1=-\huaR^*\tau$ and $\rho_2=\frkad^*,~\mu_2=-\frkR^*\tau$, Eqs. \eqref{mtp1}-\eqref{mtp4} and Eqs. \eqref{mtp6}-\eqref{mtp8} are equivalent to Conditions \eqref{LYbi1}-\eqref{LYbi7} respectively, Eq. \eqref{mtp5} is equivalent to that $\omega$ is skew-symmetric with respect to the first two variables, and moreover Eq. \eqref{mtp8} and \eqref{mtp10} are equivalent.
Indeed, for all $x,y,z\in \g$ and $\xi,\eta\in \g^*$, let us now compute that
\begin{eqnarray*}
~ &&\pair{[\frkad_\xi^*x,y]-\frkad_{\ad_x^*\xi}^*y-\frkad_\xi^*[x,y]-[\frkad_\xi^*y,x]+\frkad_{\ad_y^*\xi}^*x,\eta}\\
~ &=&-\pair{\delta(x),\xi\otimes\ad_y^*\eta}+\pair{\delta(y),\ad_x^*\xi\otimes\eta}+\pair{\delta([x,y]),\xi\otimes\eta}+\pair{\delta(y),\xi\otimes\ad_x^*\eta}
-\pair{\delta(x),\ad_y^*\xi\otimes\eta}\\
~ &=&\pair{\delta([x,y])-(\ad_x\otimes1+1\otimes\ad_x)\delta(y)+(\ad_y\otimes1+1\otimes\ad_y)\delta(x),\xi\otimes\eta},
\end{eqnarray*}
which implies that Eq. \eqref{mtp1} is equivalent to Eq. \eqref{LYbi1}. Moreover, we have that
\begin{eqnarray*}
\pair{\Courant{\frkad_\xi^*x,y,z}-\Courant{\frkad_\xi^*y,x,z},\zeta}&=&-\pair{\frkad_\xi^*x,\huaR^*(y,z)\eta}+\pair{\frkad_\xi^*y,\huaR^*(x,z)\eta}\\
~ &=&\pair{\delta(x),\xi\otimes\huaR^*(y,z)\eta}-\pair{\delta(y),\xi\otimes\huaR^*(x,z)\eta}\\
~ &=&\pair{-\Big(1\otimes\huaR(y,z)\Big)\delta(x)+\Big(1\otimes\huaR(x,z)\Big)\delta(y),\xi\otimes\eta},
\end{eqnarray*}
which implies that Eq. \eqref{mtp2} is equivalent to Eq. \eqref{LYbi2}. Similarly, we obtain that Eqs. \eqref{mtp3}-\eqref{mtp4} and Eqs. \eqref{mtp6}-\eqref{mtp8} are equivalent to Eqs. \eqref{LYbi3}-\eqref{LYbi7}. What is left is to show that Eqs. \eqref{mtp9}-\eqref{mtp16} are equivalent to Eqs. \eqref{mtp1}-\eqref{mtp8} respectively. We only prove the equivalence of Eq. \eqref{mtp8} and Eq. \eqref{mtp16} since others are similar. Indeed, we have that
\begin{eqnarray*}
~ &&\pair{\huaR^*(\frkR^*(\eta,\xi)y,x)\zeta+\Courant{\huaR^*(y,x)\zeta,\xi,\eta}_*-\huaL^*(y,\frkR^*(\xi,\zeta)x)\eta-\huaR^*(\frkR^*(\eta,\zeta)x,y)\xi,z}\\
~ &=&\pair{\zeta,\Courant{\frkR^*(\eta,\xi)y,z,x}}+\pair{\zeta,\huaR(y,x)\frkR^*(\xi,\eta)z}-\pair{\eta,\huaR(y,z)\frkR^*(\xi,\zeta)x}-\pair{\xi,\huaR(z,y)\frkR^*(\eta,\zeta)x}\\
~ &=&-\pair{\Courant{\eta,\huaR^*(z,x)\zeta,\xi}_*,y}-\pair{\Courant{\xi,\huaR^*(y,x)\zeta,\eta}_*,z}-\pair{\Courant{\huaR^*(y,z)\eta,\xi,\zeta}_*,x}
+\pair{\Courant{\eta,\huaR^*(z,y)\xi,\zeta}_*,x}\\
~ &=&\langle(\huaR(y,z)\otimes1\otimes1-1\otimes\huaR(z,y)\otimes1)\omega(x)+\sigma_{12}\sigma_{23}(1\otimes\huaR(y,x)\otimes1)\omega(z)+\sigma_{23}
(1\otimes\huaR(z,x)\otimes1)\omega(y),\\
~ && \eta\otimes\xi\otimes\zeta\rangle,
\end{eqnarray*}
which gives the equivalence of Eqs. \eqref{mtp8} and \eqref{mtp16}. This completes the proof.
\end{proof}

\begin{defi}
Let $(\g,\br,,\ltp)$ be a \LYA, and structure maps $\delta:\g\longrightarrow\otimes^2\g$ and $\omega:\g\longrightarrow\otimes^3\g$ linear maps. If Conditions (i) and (ii) in Proposition \ref{LYbi} are satisfied, then we say that $\g$ is a {\bf double construction Lie-Yamaguti bialgebra}. We denote a Lie-Yamaguti bialgebra by $(\g,\br,,\ltp,\delta,\omega)$, or simply by $(\g,\g^*)$.
\end{defi}

The following corollary is obvious.
\begin{cor}
If $(\g,\g^*)$ is a double construction Lie-Yamaguti bialgebra, then so is $(\g^*,\g)$.
\end{cor}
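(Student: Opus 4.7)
The plan is to bypass a direct symbol-by-symbol verification of the compatibility conditions \eqref{LYbi1}--\eqref{LYbi7} with the roles of $\g$ and $\g^*$ exchanged, and instead exploit the symmetry that is built into the matched-pair/Manin-triple characterization that has just been established. Concretely, I would proceed by a two-step translation.

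First, I would invoke Proposition \ref{LYbi} to rephrase the hypothesis: since $(\g,\g^*)$ is a double construction Lie-Yamaguti bialgebra, the quadruple
$\Big(\g,\g^*;(\ad^*,-\huaR^*\tau),(\frkad^*,-\frkR^*\tau)\Big)$
is a matched pair of \LYA s. Next, I would appeal to Proposition \ref{thm:mt} to conclude that $\Big((\g\oplus\g^*,\huaB),\g,\g^*\Big)$ is a Manin triple, where $\huaB$ is the canonical pairing given by \eqref{bilinear}.

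The crucial observation is then that the definition of a Manin triple $\Big((V,\huaB),\g_1,\g_2\Big)$ in Definition \ref{Manin} is manifestly symmetric in the two isotropic subalgebras $\g_1$ and $\g_2$: swapping the projections $\pr_1$ and $\pr_2$ leaves Condition (iii) unchanged, and the bilinear form \eqref{bilinear} on $\g\oplus\g^*$ is obviously invariant under the flip $(x,\xi)\leftrightarrow(\xi,x)$. Consequently $\Big((\g^*\oplus\g,\huaB),\g^*,\g\Big)$ is also a Manin triple of \LYA s. Running Proposition \ref{thm:mt} backwards with $\g$ and $\g^*$ interchanged yields the matched pair
$\Big(\g^*,\g;(\frkad^*,-\frkR^*\tau),(\ad^*,-\huaR^*\tau)\Big),$
and a final application of Proposition \ref{LYbi} (again with the roles swapped) gives the compatibility conditions of a double construction bialgebra for the pair $(\g^*,\g)$, where now the ``structure maps'' on $\g^*$ are the duals $[\cdot,\cdot]^*$ and $\llbracket\cdot,\cdot,\cdot\rrbracket^*$ of the original brackets on $\g$.

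There is essentially no computational obstacle here; the entire content of the corollary is the symmetry of the Manin-triple viewpoint. The only point one should be careful about is book-keeping: one must check that under the identification $\g\oplus\g^*\cong\g^*\oplus\g$ the coadjoint representation of $\g$ on $\g^*$ plays the role of $\rho_2,\mu_2$ (rather than $\rho_1,\mu_1$) in the swapped matched pair, and vice versa. Once this is confirmed, the conclusion is immediate and the proof is essentially one line invoking the chain (double construction) $\Leftrightarrow$ (matched pair) $\Leftrightarrow$ (Manin triple), together with the obvious involution on Manin triples.
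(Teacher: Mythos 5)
Your proposal is correct, and it is essentially the argument the paper has in mind (the paper simply declares the corollary ``obvious''): the equivalence of the double construction with the matched pair $\Big(\g,\g^*;(\ad^*,-\huaR^*\tau),(\frkad^*,-\frkR^*\tau)\Big)$ from Proposition \ref{LYbi}, combined with the manifest symmetry of the notion under exchanging the two summands, immediately yields the statement for $(\g^*,\g)$. The only remark is that the detour through Proposition \ref{thm:mt} and Manin triples is optional, since the matched-pair condition itself --- that \eqref{MT1}--\eqref{MT2} define a Lie-Yamaguti algebra on $\g_1\oplus\g_2$ --- is already visibly invariant under swapping $(\g_1,\rho_1,\mu_1)$ with $(\g_2,\rho_2,\mu_2)$, so one application of Proposition \ref{LYbi} to $\g^*$ (with the book-keeping on coadjoint representations that you correctly flag) suffices.
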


By Proposition \ref{thm:mt} and Proposition \ref{LYbi}, we obtain the following theorem directly.
\begin{thm}\label{main}
Let $(\g,\br,,\ltp)$ be a \LYA ~and $\delta:\g\longrightarrow\otimes^2\g$ and $\omega:\g\longrightarrow\otimes^3\g$ linear maps. Suppose that the structure map $(\delta^*,\omega^*)$ defines a \LYA ~structure on $\g^*$. Then the following statements are equivalent:
\begin{itemize}
\item[\rm(1)] the \LYA ~$\g$ makes $(\g,\g^*)$ into a double construction Lie-Yamaguti bialgebra;
\item[\rm(2)] the quadruple $\Big(\g,\g^*;(\ad^*,-\huaR^*\tau),(\mathfrak{ad}^*,-\frkR^*\tau)\Big)$ is a matched pair of \LYA s;
\item[\rm(3)] the triple $\Big((\g\oplus\g^*,\huaB),\g,\g^*\Big)$ is a standard Manin triple, where the invariant bilinear form $\huaB$ is given by \eqref{bilinear}.
\end{itemize}
In this case, the \LYA ~$(\g\oplus\g^*,\br_{_{\g\oplus\g^*}},\ltp_{\g\oplus\g^*})$ is called the {\bf double} of the Lie-Yamaguti bialgebra $(\g,\g^*)$, and is denoted by $\g\bowtie\g^*$.
\end{thm}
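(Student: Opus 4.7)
The plan is to simply assemble Theorem~\ref{main} from the two equivalences already proved in the excerpt, so the proof will essentially be a citation of Proposition~\ref{thm:mt} and Proposition~\ref{LYbi}. I would first recall what each of the three statements amounts to in terms of structure on $\g \oplus \g^*$ equipped with the bracket $(\br_{_{\g\oplus\g^*}}, \ltp_{\g\oplus\g^*})$ from \eqref{double1}-\eqref{double2} and the canonical pairing $\huaB$ from \eqref{bilinear}.

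For the implication $(1) \Leftrightarrow (2)$, I would invoke Proposition~\ref{LYbi} directly: by definition a double construction Lie-Yamaguti bialgebra is precisely a \LYA~$(\g,\br,,\ltp)$ together with linear maps $\delta:\g\longrightarrow\otimes^2\g$ and $\omega:\g\longrightarrow\otimes^3\g$ such that $(\delta^*,\omega^*)$ gives a \LYA~structure on $\g^*$ and conditions (i) and (ii) of Proposition~\ref{LYbi} hold. The content of Proposition~\ref{LYbi} is exactly that these compatibility conditions translate, under the dualisation $\delta \leftrightarrow \br_{_*}$ and $\omega \leftrightarrow \ltp_*$, into the matched pair axioms for the quadruple $\Big(\g,\g^*;(\ad^*,-\huaR^*\tau),(\mathfrak{ad}^*,-\frkR^*\tau)\Big)$. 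So (1) and (2) are literally equivalent.

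For the implication $(2) \Leftrightarrow (3)$, I would invoke Proposition~\ref{thm:mt}. There it is shown that the bracket $(\br_{_{\g\oplus\g^*}},\ltp_{\g\oplus\g^*})$ endows $\g \oplus \g^*$ with a \LYA~structure and simultaneously makes $\huaB$ invariant (satisfying \eqref{invr1} and \eqref{invr2}) precisely when the coadjoint representations fit together into a matched pair. Since $\g$ and $\g^*$ are clearly isotropic with respect to $\huaB$ and complementary as vector subspaces, and since the projection conditions in Definition~\ref{Manin}(iii) are built into the very form of \eqref{double1}-\eqref{double2} (there are no $\g$-components on the $[\xi,\eta]_*$ side nor $\g^*$-components on the $[x,y]$ side of the coadjoint-action terms), a standard Manin triple structure on $\g \oplus \g^*$ is equivalent to the matched pair data of (2).

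Combining the two equivalences yields $(1) \Leftrightarrow (2) \Leftrightarrow (3)$, and in that case the doubled bracket $(\br_{_{\g\oplus\g^*}},\ltp_{\g\oplus\g^*})$ defines the \LYA~$\g\bowtie\g^*$. There is no real obstacle here: the theorem is a packaging statement, and the real work has been done in Propositions~\ref{thm:mt} and \ref{LYbi}. The only small point to verify explicitly is that the isotropy and projection conditions in Definition~\ref{Manin} are automatic from the form of \eqref{double1}-\eqref{double2}, which is immediate by inspection.
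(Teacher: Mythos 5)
Your proposal matches the paper's argument exactly: the theorem is stated there as a direct consequence of Proposition~\ref{thm:mt} (matched pair $\Leftrightarrow$ Manin triple) and Proposition~\ref{LYbi} (matched pair $\Leftrightarrow$ double construction bialgebra), which is precisely how you assemble it. Your extra remark that the isotropy and projection conditions of Definition~\ref{Manin} are built into \eqref{double1}--\eqref{double2} is a correct and harmless elaboration of what the paper already notes just before Proposition~\ref{stand}.
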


\emptycomment{
\begin{rmk}
%A Lie-Yamaguti bialgebra established in this section is the double construction parallel to the context of $3$-Lie bialgebras or Leibniz bialgebras in \cite{B.G.S} and \cite{T.S2}. For Lie bialgebras, another compatibility condition is the cocycle condition since we are able to form the tensor representation when two representations are given.
Since there does not exist the tensor representation in the context of \LYA s, we are not able to study Lie-Yamaguti bialgebras by coboundary methods. In the context of $3$-Lie algebra
$(\g,[\cdot,\cdot,\cdot])$, we have $(\ot^3\g;\huaL\otimes 1\otimes1)$, $(\ot^3\g;1\otimes\huaL\otimes1)$, and $(\ot^3\g;1\otimes1\otimes\huaL)$ are representations of the $3$-Lie algebra $\g$, hence authors studied local cocycle $3$-Lie bialgebras in \cite{B.G.S}. This property even fails in the context of \LYA s. Thus we cannot study the local cocycle Lie-Yamaguti bialgebras parallel to the $3$-Lie bialgebras, leading to not being able to explore the relationship between the classical Yang-Baxter equation and Lie-Yamaguti bialgebras. We, however, clarify the relationship between the solution to the classical Yang-Baxter equation in \LYA s and relative Rota-Baxter operators in the next section. And one sees that our bialgebra theory for \LYA s enjoys many good properties parallel to that for Lie algebras in the sequel, which justifies its correctness.
\end{rmk}
}

The following proposition reveals the relationship between local cocycle Lie-Yamaguti bialgebras and double construction Lie-Yamaguti bialgebras.
\begin{pro}
A double construction Lie-Yamaguti bialgebra gives rise to a local cocycle Lie-Yamaguti bialgebra.
\end{pro}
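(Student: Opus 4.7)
The plan is to produce, from a double construction Lie-Yamaguti bialgebra $(\g,[\cdot,\cdot],\Courant{\cdot,\cdot,\cdot},\delta,\omega)$, explicit decompositions $\delta = \delta_1 + \delta_2$ and $\omega = \omega_1 + \omega_2 + \omega_3$ realizing the local cocycle structure of Definition \ref{local}. Motivated by the coboundary formulas \eqref{c1} and \eqref{c2}, where a classical Lie-Yamaguti $r$-matrix canonically produces individual $\delta_i, \omega_j$, I first propose the symmetric choice $\delta_1 = \delta_2 = \tfrac{1}{2}\delta$ and $\omega_1 = \omega_2 = \omega_3 = \tfrac{1}{3}\omega$. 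Since $(\delta^*, \omega^*)$ already equips $\g^*$ with a Lie-Yamaguti structure by hypothesis, only the five $1$-cocycle identities in Definition \ref{local} remain to be verified.

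For $\delta_1$, and symmetrically $\delta_2$, I would feed the sum-type compatibility conditions from Proposition \ref{LYbi}(ii)---in particular the identity $\delta([x,y]) = (\ad_x \otimes 1 + 1 \otimes \ad_x)\delta(y) - (\ad_y \otimes 1 + 1 \otimes \ad_y)\delta(x)$ and its ternary analogue---and invoke the skew-symmetry $\tau \delta = -\delta$ from Proposition \ref{skew} in the crucial form
\[
(\ad_x \otimes 1)\delta(y) \;=\; -\,\tau\,(1 \otimes \ad_x)\delta(y),
\]
together with the corresponding $\huaR$- and $\huaL$-analogues. This is the key algebraic move: it reinterprets a two-slot action in DC as a single-slot action modulo the flip $\tau$, and, combined with the built-in skew-symmetry of $\delta([x,y])$ itself, should collapse the DC compatibility onto the required LC cocycle identity for $\delta_1$ with respect to $(\otimes^2\g;1\otimes\ad,1\otimes\huaR)$.

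For each $\omega_i$ the binary cocycle identity should fall out of the $\omega([x,y])$-condition in Proposition \ref{LYbi}(ii) combined with the slotwise skew-symmetries of $\omega$ recorded in Proposition \ref{skew} and the same ``swap $\tau$''-trick applied on the appropriate tensor slot. The ternary cocycle identity
\[
\omega_i(\Courant{x,y,z}) \;=\; D_i(x,y)\omega_i(z) + \mu_i(y,z)\omega_i(x) - \mu_i(x,z)\omega_i(y)
\]
is the principal technical hurdle: the DC condition $\omega(\Courant{x,y,z}) = (\huaL(x,y)\otimes 1 \otimes 1 + 1\otimes \huaL(x,y)\otimes 1 + 1\otimes 1\otimes \huaL(x,y))\omega(z)$ supplies the three $D_i$-contributions simultaneously, but not the two $\mu_i$-terms appearing on the LC side. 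These missing $\mu_i$-terms must be recovered from the auxiliary compatibility \eqref{LYbi7}, which intertwines $\omega(x), \omega(y), \omega(z)$ through $\huaR$-actions and the permutations $\sigma_{12}, \sigma_{23}$; this intertwining, together with the first-two-slot skew-symmetry of $\omega$, is the crux of the proof. If the naive symmetric decomposition fails to split the DC identities cleanly in any of the five verifications, I would refine the coefficients or permute factors to absorb the residual $\tau$-terms, using the coboundary decomposition \eqref{c1}--\eqref{c2} as a guide.
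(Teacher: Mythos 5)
Your decomposition is exactly the one the paper uses: the paper sets $\delta_1=\delta_2=\tfrac12\delta$ and $\omega_j=k_j\omega$ with $k_1=k_2$ and $k_1+k_2+k_3=1$ (your choice $k_j=\tfrac13$ is an instance of this), and then declares the five cocycle identities obvious. You are right that the verification is where all the content lies, and the difficulty you isolate --- that the double construction identities control \emph{sums} over tensor slots while the local cocycle condition demands each slot \emph{separately} --- is precisely the right one to worry about.

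However, the resolution you propose does not close, and your own key algebraic move shows why. Consider the binary cocycle identity for $\delta_1=\tfrac12\delta$ with respect to $(\otimes^2\g;1\otimes\ad,1\otimes\huaR)$: it requires $\delta([x,y])=A$, where $A:=(1\otimes\ad_x)\delta(y)-(1\otimes\ad_y)\delta(x)$. The $\tau$-swap (valid because $\tau\delta=-\delta$) gives $(\ad_x\otimes 1)\delta(y)=-\tau\,(1\otimes\ad_x)\delta(y)$, so condition \eqref{LYbi1} reads $\delta([x,y])=A-\tau A$. This pins down only the antisymmetrization of $A$, not $A$ itself; the ``collapse'' onto the slot-wise identity occurs only when $\tau A=0$, i.e.\ $A=0$, i.e.\ $\delta\circ[\cdot,\cdot]=0$. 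Moreover the discrepancy is the term $\tau A$, which is not a scalar multiple of $A$, so your fallback of refining the coefficients $k_j$ cannot absorb it: the obstruction is structural, not a normalization mismatch. The same phenomenon recurs for the $\huaR$-terms in the ternary identity, where \eqref{LYbi7} relates \emph{permuted} slots of $\omega(x),\omega(y),\omega(z)$ and does not deliver the unpermuted slot-wise identities that the cocycle condition for each $\omega_j$ demands. To make the argument work one would need to extract genuinely slot-wise identities from the matched-pair equations (or impose extra symmetry on the single-slot expressions $A$); neither your sketch nor the paper's one-line appeal to obviousness supplies this missing step.
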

\begin{proof}
Let $(\g,\delta,\omega)$ be a double construction Lie-Yamaguti bialgebra. Let $k_1,k_2,k_3$ be complex numbers such  that $k_1=k_2$ and $k_1+k_2+k_3=1$. Denote by $\delta_i=\half\delta$ and $\omega_j=k_j\omega$, where $i=1,2;~j=1,2,3$. Set $\delta=\delta_1+\delta_2$ and $\omega=\omega_1+\omega_2+\omega_3$. It is obvious that $\delta_1, \delta_2$ are $1$-cocycles of $\g$ with respect to the representations $(\otimes^2\g;1\otimes\ad,1\otimes\huaR),(\otimes^2\g;\ad\otimes1,\huaR\otimes1)$ respectively, and $\omega_1,\omega_2,\omega_3$ are $1$-cocycles of $\g$ with respect to $(\otimes^3\g;\ad\otimes1\otimes1,\huaR\otimes1\otimes1),
(\otimes^3\g;1\otimes\ad\otimes1,1\otimes\huaR\otimes1),~(\otimes^3\g;1\otimes1\otimes\ad,1\otimes1\otimes\huaR)$ respectively. Hence $(\delta,\omega)$ defines a local cocycle Lie-Yamaguti bialgebra on $\g$.
\end{proof}

We give some examples of double construction Lie-Yamaguti bialgebras to end up with this section. As a first example, we have the following trivial Lie-Yamaguti bialgebra.
\begin{ex}
For any \LYA ~$\g$, taking $\delta=0$ and $\omega=0$, then $(\g,\delta,\omega)$ is a Lie-Yamaguti bialgebra. In this case, the corresponding Manin triple gives a quadratic \LYA ~$(\g\ltimes_{\ad^*,-\huaR^*\tau}\g^*,\huaB)$. Dually, for any trivial \LYA ~$\g$ (that is, both binary and trinary brackets are zero), any \LYA ~structure $(\delta^*,\omega^*)$ on the dual space $\g^*$ makes $(\g,\delta,\omega)$ a Lie-Yamaguti bialgebra. Such Lie-Yamaguti bialgebra is called the {\bf trivial Lie-Yamaguti bialgebra}.
\end{ex}

\begin{ex}
Let $\g$ be the $2$-dimensional \LYA ~given in Example \ref{ex:2di}.
The nonzero cobrackets  $\delta:\g\longrightarrow\otimes^2\g$ and $\omega:\g\longrightarrow\otimes^3\g$ are given by
$$\delta(e_1)=e_1\otimes e_2,\quad\omega(e_1)=e_1\otimes e_2\otimes e_2.$$
Then $\delta^*:\otimes^2\g^*\longrightarrow\g^*$ and $\omega^*:\otimes^3\g^*\longrightarrow\g^*$ defines a pair of \LYA ~structure on $\g^*$ that is isomorphic to $\g$. It is direct to see that $(\g,\delta,\omega)$ is a Lie-Yamaguti bialgebra.
\end{ex}

\begin{ex}
Let $\g$ be a $4$-dimensional \LYA ~with a basis $\{e_1,e_2,e_3,e_4\}$ defined to be
$$[e_1,e_2]=2e_4,\quad\Courant{e_1,e_2,e_1}=e_4.$$
If the dual of linear maps $\delta:\g\longrightarrow\otimes^2\g$ and $\omega:\g\longrightarrow\otimes^3\g$  forms a \LYA~structure on $\g^*$ such that $(\g\oplus\g^*,\g,\g^*)$ is a Manin triple, where the invariant bilinear form is given by \eqref{bilinear}. Then $\delta=0$ and $\omega=0$.
\end{ex}

%%\smallskip
%%\section*{Appendix}


\begin{thebibliography}{a}

 %\bibitem{BC} J. Baez and A. S. Crans, Higher-Dimensional Algebra VI: Lie 2-Algebras, \emph{Theory and Appl. Categ.} 12 (2004), 492-528.

%\bibitem{baez:classicalstring}
%J.~C. Baez, A.~E. Hoffnung, and C.~L. Rogers,
%\newblock Categorified symplectic geometry and the classical string, \newblock {\em Comm. Math. Phys.}, 293(3) (2010), 701-725.

%\bibitem{BS}
%V. G. Bardakov and M. Singh, Extensions and automorphisms of Lie algebras. \emph{Journal of Algebra and its Applications.} 16 (2017), 1750162, 15 pp.

%\bibitem{Casas}
% J. M. Casas, M. A. Insua and N. Pacheco, On universal central extensions of Hom-Lie algebras. \emph{Hacet. J. Math. Stat.} 44 (2015), no. 2, 277-288.

%\bibitem{CKP}
%J. M. Casas, E. Khmaladze and N. Pacheco Rego, Non-abelian homology of Hom-Lie algebras and applications. \emph{Proc. A. Razmadze Math. Inst.} 167 (2015), 99每106.

%\bibitem{CasasNonT}J. M. Casas, E. Khmaladze, and N. P. Rego, A Non-abelian Tensor Product of Hom每Lie Algebras. \emph{Bull. Malays. Math. Sci. Soc.} (2016). doi:10.1007/s40840-016-0352-0.

%\bibitem{CaiSheng}
%L. Cai and Y. Sheng, Hom-big brackets: theories and applications. \emph{SIGMA Symmetry Integrability Geom. Methods Appl.} 12 (2016), Paper No. 014, 18 pp.

%\bibitem{AMM}
%E. Abdaoui, S. Mabrouk, and  A. Makhlouf, Rota-Baxter operators on pre-Lie superalgebras, \emph{Bull. Malays. Math. Sci. Soc.}  {\bf42} (2019), no.4, 1567-1606.

%\bibitem{An Bai}
%H. An, C. Bai, From Rota-Baxter algebras to pre-Lie algebras, \emph{J. Phys.} {\bf A 41} (2008) no.1, 015201, 19pp.

%\bibitem{A A1}
%A. Andrada, Hypersymplectic Lie algebras, \emph{J. Geom. Phys.} {\bf 56(10)} (2006) 2039-2067.


%\bibitem{BCM3}
%C. Bai, Left-symmetric algebras from linear functions, \emph{J. Algebra} {\bf 281} (2004) no.2, 651-665.

%\bibitem{BCM1}
%C. Bai, A further study on non-abelian phases: left-symmetric algebraic approach and related geometry, \emph{Rev. Math. Phys.} {\bf 18} (2006) 545-564.

%\bibitem{BCM2}
%C. Bai, Left-symmetric bialgebras and an analogue of the classical Yang-Baxter equations,  \emph{Commun. Comtemp. Math.} {\bf 10} (2) (2008) 221-260.

\bibitem{BCM3}
C. Bai, A unified algebraic approach to the classical Yang-Baxter equation, \emph{J. Math. A: Math. Theor.} {\bf 40} (2007) 11073-11082.

%\bibitem{Bai-Bellier-Guo-Ni}
%C. Bai, O. Bellier, L. Guo, and X. Ni, Splitting of operations, Manin products, and Rota-Baxter operators, \emph{Int. Math. Res. Not.} {\bf 3} (2013) 485-524.

\bibitem{BGLM}
C. Bai, L. Guo, G. Liu, and T. Ma, Rota-Baxter Lie bialgebras, classical Yang-Baxter equations and special L-dendriform bialgebras, arXiv: 2207. 08703v1.


\bibitem{B.G.S}
 C. Bai, L. Guo, and Y. Sheng, Bialgebras, the classical Yang-Baxter equation and Manin triples for 3-Lie algebras, \emph{Adv. Theor. Math. Phys.} {\bf 23} (2019) 27-74.

\bibitem{BGS1}
 C. Bai, L. Guo, and Y. Sheng, Coherent categorical structures for Lie bialgebras, Manin triples, classical $r$-matrices and pre-Lie algebras, \emph{Forum Math.} {\bf 34(4)} (2022) 989-1013.

%\bibitem{Bairp}
%R. Bai, S. Chen, R. Cheng, Symplectic structures on $3$-Lie algebas, arXiv:1408.4763.

\bibitem{BCLM}
R. Bai, Y. Cheng, J. Li, and W. Meng, $3$-Lie bialgebras, \emph{Acta Math. Sci. Ser. (B Engl. Ed.)} {\bf 34} (2014) 513-522.

%\bibitem{BaiRGuo}
%R. Bai, L. Guo, J. Li, and Y. Wu, Rota-Baxter 3-Lie algebras, \emph{J. Math. Phys.}  {\bf 54}  (2013),  no. 6, 063504, 14 pp.

\bibitem{BGLZ}
R. Bai, W. Guo, J. Li, and Y. Zhang, $n$-Lie bialgebras, \emph{Linear Multilinear Algebra} {\bf 54} (2013) 064504.

%\bibitem{bala}
%D. Balavoine, Deformations of algebras over a quadratic operad. Operad: Proceedings of Renaissance Conferences (Hartford, CT/Luminy, 1995), \emph{Contemp. Math. Amer. Math. Soc.}, Providence, vol.{\bf 202}, (1997) 207-234.


%\bibitem{Ba}
%G. Baxter, An analytic problem whose solution follows from a simple algebraic identity, \emph{Pacific J. Math.}  {\bf 10}  (1960), 731-742.

\bibitem{B.B.M}
P. Benito, M. Bremner, and S. Madariaga, Symmetric matrices, orthogonal Lie algebras and Lie-Yamaguti algebras, \emph{Linear Multilinear Algebra} {\bf 63} (2015)
1257-1287.

\bibitem{B.D.E}
P. Benito, C. Draper, and A. Elduque, Lie-Yamaguti algebras related to $\huaG_2$, \emph{J. Pure Appl. Algebra} {\bf 202} (2005) 22-54.

\bibitem{B.E.M1}
P. Benito, A. Elduque, and F. Mart$\acute{i}$n-Herce, Irreducible Lie-Yamaguti algebras, \emph{J. Pure Appl. Algebra} {\bf 213} (2009) 795-808.

\bibitem{B.E.M2}
P. Benito, A. Elduque, and F. Mart$\acute{i}$n-Herce, Irreducible Lie-Yamaguti algebras of generic type, \emph{J. Pure Appl. Algebra} {\bf 215} (2011) 108-130.


%\bibitem{BM}
%M. R. Bremner and S. Madariaga, Dendriform analogues of Lie and Jordan triple systems, \emph{Comm. Algebra}  {\bf 42}  (2014),  no. 11, 4696-4711.

%\bibitem{Burde}
%D. Burde, Simple left-symmetric algebras with solvable Lie algebra, \emph{Manuscr. Math.} {\bf 95} (1988) 397-411.

%\bibitem{CP}
%V. Chari and A. Pressley, A Guide to Quantum Groups, Cambridge University Press, 1994.

\bibitem{chenz}
Z. Chen, M. Sti$\acute{\rm e}$non, and P. Xu, Weak Lie 2-bialgebras, \emph{J. Geom. Phys.}  {\bf 68}  (2013) 59-68.

\bibitem{CSX}
Z. Chen, M. Sti$\acute{\rm e}$non, and P. Xu, Poisson $2$-groups,  \emph{J. Differential Geom.} {\bf 94} (2013) no. 2, 209-240

%\bibitem{Dorfman}
%I. Dorfman, Dirac structures and integrability of nonlinear evolution equations, John Wiley, (1993).


%\bibitem{Chen Hou Bai}
%Z. Chen, D. Hou, C. Bai, A left-symmetric algebraic approach to left invariant flat (pseudo-)metric on Lie groups, \emph{J. Geom. Phys.} {62} (2012)
%no.7, 1600-1610.

%\bibitem{Chu}
%B. Y. Chu, Symplectic homogeneous spaces, \emph{Trans. Amer. Math. Soc.} {\bf 197} (1974) 145-159.

%\bibitem{Gerstenhaber1}
%M. Gerstenhaber, The cohomology structure od an associative ring, \emph{Ann. Math.} {\bf 78}, (1963) 267-288.

%\bibitem{Gerstenhaber2}
%M. Gerstenhaber, On the deformations of rings and algebras, \emph{Ann. Math.}(2) {\bf 79}, (1964) 59-103.

%\bibitem{Gerstenhaber3}
%M. Gerstenhaber, On the deformations of rings and algebras II, \emph{Ann. Math.} {\bf 84}, (1966) 1-19.

%\bibitem{Gerstenhaber4}
%M. Gerstenhaber, On the deformations of rings and algebras III, \emph{Ann. Math.} {\bf 88}, (1968) 1-34.


%\bibitem{Gerstenhaber5}
%M. Gerstenhaber, On the deformations of rings and algebras IV, \emph{Ann. Math.} {\bf 99}, (1974) 257-276.

%\bibitem{Get}
%E. Getzler, Lie theory for nilpotent $L_\infty$-algebras, \emph{Ann. Math. (2)}, {\bf 170} (2009), 271-301.

%\bibitem{Gub}
%L. Guo,  An introduction to Rota-Baxter algebra. Surveys of Modern Mathematics, 4. International Press, Somerville, MA; Higher Education Press, Beijing, 2012. xii+226 pp.

%\bibitem{Hart}
%R. Hartshore, Deformation Theory, \emph{Graduate Texts in Math.} 257 (2010) Springer, Berlin.

\bibitem{HST}
S. Hou, Y. Sheng, and R, Tang, Twillled $3$-Lie algebras, generalized matched pairs of $3$-Lie algebras and $\huaO$-operators, \emph{J. Geom. Phys.} {\bf 163} (2021) 104148.

%\bibitem{HuLiuSheng}
%Y. Hu, J. Liu, and Y. Sheng, Kupershmidt-(dual-)Nijenhuis structures on a Lie algebra with a representation. \emph{J. Math. Phys.} 59 (2018).

%\bibitem{Liu-Sheng}
%J. Liu, Y. Sheng, Y. Zhou, and C. Bai, Nijenhuis operators on $n$-Lie algebras. \emph{Commun. Theor. Phys.} 65 (2016), 659-670.


%\bibitem{Jac}
%N. Jacobson, Lie and Jordan triple systems, \emph{Amer. J. Math.}  {\bf71}  (1949), 149-170.

\bibitem{Kikkawa}
M. Kikkawa, On Killing-Ricci forms of Lie triple algebras, \emph{Pacific J. Math.}  {\bf96}  (1981)  no. 1, 153-161.


\bibitem{Weinstein}
M. K. Kinyon and A. Weinstein, Leibniz algebras, Courant algebroids and multiplications on reductive homogeneous spaces, \emph{Amer. J. Math.} {\bf 123} (2001) 525-550.

%\bibitem{Kodaira}
%K. Kodaira and D. Spencer, On deformations of complex analytic structures I and II, \emph{Ann. Math.} {\bf 67}, 328-466(1958).

%\bibitem{Kont1}
%M. Kontsevich, Oprads and motives in deformation quantization, \emph{Lett. Math. Phys.} {\bf 48}, 35-72(1999).

%\bibitem{Kont2}
%M. Kontsevich, Deformation quantization of Poisson manifolds, \emph{Lett. Math. Phys.} {\bf 66}, 157-216(2003).

%\bibitem{Kupershmidt}
%B. A. Kupershmidt, What a classical $r$-matrix really is, \emph{J. Nonlinear Math. Phys.} {\bf 6} (1999) no.4, 448-488.

%\bibitem{KoMa}
%Y. Kosmann-Schwarzbach and F. Magri, Poisson-Nijenhuis structures, \emph{Ann. Inst. Henri PoincareA}, {\bf 53} (1990) 35-81.

%\bibitem{KoRu}
%Y. Kosmann-Schwarzbach and V. Rubtsov, Compatible structures on Lie algebroids and Monge-Ampere operators, \emph{Acta. Appl. Math.} {\bf 109}, 101-135(2010).

%\bibitem{Lada2}
%T. Lada and M. Markl, Strongly homotopy Lie algebras, \emph{Comm. Algebra} {\bf 23} (1995), 2147-2161.

%\bibitem{Lada1}
%T. Lada and J. Stasheff, Introduction to sh Lie algebras for physicists, \emph{Internat. J. Theoret. Phys.} {\bf 32} (1993), 1087-1103.

%\bibitem{L.CHEN}
%J. Lin, L. Chen and Y. Ma, On the deformaions of Lie-Yamaguti algebras, \emph{Acta. Math. Sin. (Engl. Ser.)} {\bf 31} (2015) 938-946.

%\bibitem{Deng}
%J. Lin, Y. Wang, S. Deng, $T^*$-extension of Lie triple systems, \emph{Linear Algebra Appl.} {\bf 431} (2009) 2071-1083.

\bibitem{LQY}
H. Lang, Y. Qiao, and Y. Yin, On Lie bialgebroid crossed modules, \emph{Internat. J. Math.} {\bf 32(4)} (2021), Paper No. 2150021, 25pp.

\bibitem{LS}
H. Lang and Y. Sheng, Factorizable Lie bialgebras, quadratic Rota-Baxter Lie algebras and Rota-Baxter Lie bialgebras, arXiv: 2112.07902.

%\bibitem{LiuBaiSheng}
%J. Liu, C. Bai, and Y. Sheng, Compatible $\huaO$-operators on bimodules over associative algebras. \emph{J. Algebra} {\bf 532} (2019), 80-118.

%\bibitem{Loday}
%J.-L. Loday and B. Vallette, Algebraic Operads, Springer, 2012.

%\bibitem{Lister}
%G. Lister, A structure theory of Lie triple systems, \emph{Trans. Amer. Math. Soc.} {\bf 72} (1952) 217-242.

%\bibitem{Magri}
%F. Magri and C. Morosi, A geometrical characterization of integrable hamiltonian systems through the theory of Poisson-Nijenhuis manifolds, Quaderno S/19, Milan, 1984.

%\bibitem{Liu Bai Sheng}
%J. Liu, Y. Sheng, C. Bai, $F$-manifold algebras and deformation quantization via pre-Lie algebras, \emph{J. Algebra} {\bf 559} (2020) 467-495.


%\bibitem{Liu Bai Sheng2}
%J. Liu, Y. Sheng, C. Bai, Left-symmetric bialgebroids and their correponding Manin triples, \emph{Differential Geom. Appl.} {\bf 59} (2018) 99-111.


%\bibitem{Liu Sheng Bai3}
%J. Liu, Y. Sheng, C. Bai, Z. Chen, Let-symmetric algebroids, \emph{Math. Nachr.} {\bf 289} (2016) no.14-15, 1893-1908.

%\bibitem{Ma Y}
%Y. Ma, L. Chen, J. Lin, One-parameter formal deformations of Hom-Lie-Yamaguti algebras, \emph{J. Math. Phys.} {\bf 56} (2015) 011701, 12pp.

%\bibitem{Nij1}
%A. Nijenhuis and R. Richardson, Cohomology and deformations in graded Lie algebras, \emph{Bull. Am. Math. Soc.} {\bf 72} (1966) 1-29.

%\bibitem{Nij2}
%A. Nijenhuis and R. Richardson, Commutative algebra cohomology and deformations of Lie and associative algebras, \emph{J. Algebra} {\bf 9} (1968) 42-105.

\bibitem{Nomizu}
K. Nomizu, Invariant affine connections on homogeneous spaces, \emph{Amer. J. Math.} {\bf76} (1954) 33-65.

%\bibitem{PBG}
%J. Pei, C. Bai and L. Guo, Splitting of operads and Rota-Baxter operators on operads, \emph{Appl. Categ. Structures}  {\bf25}  (2017),  no. 4, 505-538.

%\bibitem{Rot}
%M. Rotkiewicz, Cohomology ring of $n$-Lie algebras, \emph{Extracta Math.} 20 (2005), 219-232.

\bibitem{STS}
 M. A. Semonov-Tian-Shansky, What is a classical R-matrix? \emph{Funct. Anal. Appl.} {\bf 17} (1983) 259-272.


%\bibitem{T.S}
%Y. Sheng, R. Tang, Symplectic, product, and complex structures on 3-Lie algebras. \emph{J. Algebra} {\bf 508} (2018), 256-300.

\bibitem{T.S2}
Y. Sheng and R. Tang, Leibniz bialgebras, relative  Rota-Baxter operators and the classical Leibniz-Yang-Baxter equation,
to appear in {\em J. Noncommut. Geom.}, arXiv:1902.03033.%\textcolor{blue}{published}


\bibitem{Sheng Zhao}
Y. Sheng, J. Zhao, and Y. Zhou, Nijenhuis operators, product structures and complex structures on Lie-Yamaguti algebras, \emph{J. Algebra Appl.}  {\bf 20 (8)} (2021) 22 pages.%2150146,  DOI: 10.1142/S0219498821501462.

\bibitem{SZ1}
Y. Sheng and J. Zhao, Relative Rota-Baxter operators and symplectic structures on Lie-Yamaguti algebras,  to appear in \emph{Commun. Algebra}. %DOI:10.1080/00927872.2022.2057517.

%\bibitem{Stasheff}
%J. Stasheff, Differential graded Lie algebras, quasi-Hopf algebras and higher homotopy algebras. \emph{Quantum groups (Leningrad, 1990)}, 120-137, Lecture notes in Math., 1510, \emph{Springer, Berlin}, 1992.

\bibitem{Takahashi}
N. Takahashi, Modules over quadratic spaces and representations of Lie-Yamaguti algebras, arXiv:2010.05564.

\bibitem{TBGS}
R. Tang, C. Bai, L. Guo, and Y. Sheng, Deformations and their controlling cohomologies of $\huaO$-operators, \emph{Commun. Math. Phys.} {\bf 368}
 (2019) 665-700.

%\bibitem{THS}
%R. Tang, S. Hou, and Y. Sheng, Lie 3-algebras and deformations of relative Rota-Baxter operators on 3-Lie algebras, \emph{J. Algebra}  {\bf567}  (2021), 37-62.

%\bibitem{Voronov}
%Th. Vornovo, Higher derived brackers and homotopy algebras, \emph{J. Pure Appl. Algebra} {\bf 202} (2005), 133-153.

\bibitem{Yamaguti1}
K. Yamaguti, On the Lie triple system and its generalization, \emph{J. Sci. Hiroshima Univ. Ser. A} {\bf 21} (1957/1958) 155-160.

\bibitem{Yamaguti2}
K. Yamaguti, On cohmology groups of general Lie triple systems, \emph{Kumamoto J. Sci. A} {\bf 8} (1967/1969) 135-146.

\bibitem{Yamaguti3}
K. Yamaguti, On the cohomology space of Lie triple systems, \emph{Kumamoto J. Sci. A} {\bf 5(1)} (1960) 44-52.

\bibitem{Yau}
D. Yau, The classical Hom-Yang-Baxter equation and Hom-Lie bialgebras, \emph{Int. Electron. J. Algebra} {\bf 17} (2015) 11-45.

%\bibitem{Zhang Bai}
%R. Zhang, C. Bai, On some left-symmetric superalgebras, \emph{J. Algebra Appl.} {\bf 11} (2012) no.5, 1250097, 26pp.


\bibitem{Zhang1}
T. Zhang and J. Li, Deformations and extension of Lie-Yamaguti algebras, \emph{Linear Multilinear Algebra} {\bf 63} (2015) 2212-2231.

%\bibitem{Zhang2}
%T. Zhang and J. Li, Representations and cohomologies of Lie-Yamaguti algebras with applications, \emph{Colloq. Math.} {\bf 148} (2017) 131-155.

%\bibitem{Zhao-Liu}
%J. Zhao, J. Liu, and Y. Sheng, Cohomologies and relative Rota-Baxter-Nijenhuis structures on $3$-$\mathsf{LieRep}$ pairs, to appear in \emph{Linear Multilinear Algebra}.

\bibitem{Zhao Qiao}
J. Zhao and Y. Qiao, Cohomology and deformations of relative Rota-Baxter operators on \LYA s, arXiv: 2204.04872.

%\bibitem{ZQ1}
%J. Zhao and Y. Qiao, Relative Rota-Baxter-Nijenhuis structures on \LYP pairs, arXiv: 2206.05905.


%\bibitem{H}
%N. K. Hermann, Non-abelian extensions of topological Lie algebras. \emph{Comm. Algebra.} 34 (2006), no. 3, 991每1041.


%\bibitem{IKL}
%N. Inassaridze, E. Khmaladze and M. Ladra, Non-abelian cohomology and extensions of Lie algebras. \emph{J. Lie Theory.} 18 (2008), no. 2, 413每432.


%\bibitem{LGT}
%C. Laurent-Gengoux and J. Teles, Hom-Lie algebroids. \emph{J. Geom.  Phys.} 68 (2013), 69-75.









%\bibitem{sheng1}
%Y. Sheng and C. Bai, A new approach to hom-Lie bialgebras. \emph{J. Algebra.} 399 (2014), 232-250.




%\bibitem{MS2} A. Makhlouf and S. Silvestrov, Hom-algebra
%structures. \emph{J. Gen. Lie Theory Appl.} Vol. 2 (2008), No. 2,
%51-64.


%\bibitem{Yao1}
 %D. Yau, The Hom-Yang-Baxter equation, Hom-Lie algebras, and quasi-triangular bialgebras. \emph{J. Phys. A} 42 (2009), no. 16, 165202, 12 pp.



%\bibitem{Yao3}
%D. Yau, The classical Hom-Yang-Baxter equation and Hom-Lie bialgebras. \emph{Int. Electron. J. Algebra} 17 (2015), 11-45.
\end{thebibliography}
\end{document}